\newcommand{\up}[1]{{{}^{#1}\!}} 
\newcommand{\mycomment}[1]{}
\newcommand{\old}[1]{{\color{red} #1}}
\DeclareMathOperator{\ann}{ann}
\DeclareMathOperator{\codim}{codim}
\DeclareMathOperator{\curv}{curv}
\DeclareMathOperator{\cx}{cx}
\DeclareMathOperator{\depth}{depth}
\DeclareMathOperator{\syz}{\Omega}
\DeclareMathOperator{\Ext}{Ext}
\DeclareMathOperator{\gdim}{G-dim}
\DeclareMathOperator{\Hom}{Hom}
\DeclareMathOperator{\id}{id}
\DeclareMathOperator{\injcx}{inj\,cx}
\DeclareMathOperator{\injcurv}{inj\,curv}
\DeclareMathOperator{\pd}{pd}
\DeclareMathOperator{\tcx}{tcx}
\DeclareMathOperator{\tcurv}{tcurv}
\DeclareMathOperator{\rank}{rank}
\DeclareMathOperator{\Tor}{Tor}
\renewcommand{\ge}{\geqslant}
\renewcommand{\le}{\leqslant}
\newcommand{\fa}{\mathfrak{a}}
\newcommand{\m}{\mathfrak{m}}
\newcommand{\fn}{\mathfrak{n}}
\renewcommand{\iff}{if and only if }
\theoremstyle{plain}
\newtheorem{theorem}{Theorem}[section]
\newtheorem{lemma}[theorem]{Lemma}
\newtheorem{proposition}[theorem]{Proposition}
\newtheorem{corollary}[theorem]{Corollary}
\theoremstyle{definition}
\newtheorem{definition}[theorem]{Definition}
\newtheorem{example}[theorem]{Example}
\newtheorem{para}[theorem]{}
\newtheorem{question}[theorem]{Question}
\theoremstyle{remark}
\newtheorem{remark}[theorem]{Remark}
\numberwithin{equation}{section}
\title[Complexity and curvature of pairs of Burch modules]{Complexity and curvature of pairs of Burch modules and ideals}
\author[S.~Dey]{Souvik Dey}
\address{Department of Mathematics, University of Arkansas, Fayetteville, AR 72701, U.S.A}
\email{souvikd@uark.edu, dey0976@gmail.com}
\author[D.~Ghosh]{Dipankar Ghosh}
\address{Department of Mathematics, Indian Institute of Technology Kharagpur, West Bengal - 721302, India}
\email{dipankar@maths.iitkgp.ac.in, dipug23@gmail.com}
\urladdr{\url{https://orcid.org/0000-0002-3773-4003}}
\author[M.~Samanta]{Mouma Samanta}
\address{Department of Mathematics, Indian Institute of Technology Kharagpur, West Bengal - 721302, India}
\email{mouma17@kgpian.iitkgp.ac.in, samantastm17@gmail.com}
\subjclass[2020]{13D07, 13D05, 13C13, 13H10}
\keywords{Burch ideals and modules; Ext and Tor; Complexity and curvature; Complete intersection rings}
\begin{document}

\pagenumbering{arabic}
\thispagestyle{empty}
\begin{abstract}
    The complexity and curvature of a module were first introduced by Avramov to distinguish modules of infinite homological dimension. Later, Avramov–Buchweitz extended the notion of complexity from a single module to pairs of modules, measuring the polynomial growth rate of the minimal number of generators of their Ext-modules. By taking one of the modules in the pair to be the residue field, one recovers the standard projective and injective complexity of modules, whereas the vanishing of the complexity of a pair is equivalent to the eventual vanishing of Ext-modules, giving rise to the study of what are popularly known as Ext-pd and Ext-id test modules. Dao studied a similar notion of Tor-complexity. In the same vein, the vanishing of the Tor-complexity of pairs gives rise to Tor-pd test modules.
    On the other hand, the concept of Burch ideals was introduced by Dao–Kobayashi–Takahashi, motivated by the classical work of Burch, and subsequently extended to modules by Dey–Kobayashi.
    It follows from a result of Avramov that Burch modules exhibit extremal complexity and curvature. Moreover, Dey–Kobayashi and Ghosh–Saha showed, respectively, that Burch modules are Tor-pd and Ext-pd test, and that Burch ideals are Ext-id test. In this paper, we unify and significantly extend these two themes of extremal complexity and curvature, and Ext/Tor vanishing results of Burch modules. A key new ingredient in our proofs, particularly in dealing with Burch modules of depth zero, is the independence of the Burch property under embedding.
    \mycomment{
    Avramov introduced the complexity and curvature of a module to distinguish those of infinite homological dimension. Later, Avramov–Buchweitz extended complexity to pairs of modules, measuring the polynomial growth of the minimal number of generators of their Ext modules. Dao studied an analogous notion of Tor-complexity. Recently, Dey–Ghosh–Saha initiated the study of Ext and Tor curvature of a pair of modules, which measure the corresponding exponential growth rates. On the other hand, Burch ideals, introduced by Dao–Kobayashi–Takahashi in work motivated by Burch, form a large well-studied class of ideals in a Noetherian local ring $(R,\m,k)$. Examples include every nonzero ideal of the form $\fa\m$ (e.g., $\m^n$ for $n\ge 1$) and, under mild conditions, every integrally closed ideal $I$ with $\depth(R/I)=0$. Assume $I$ and $J$ are Burch ideals with $I$ $\m$-primary. One of the main results of this article is $\cx_R(I,J)=\tcx_R(I,J)=\cx_R(k)$. Moreover, we show that $R$ is complete intersection \iff $\cx_R(I,J)$ or $\tcx_R(I,J)$ is finite \iff $\curv_R(I,J)$ or $\tcurv_R(I,J)$ is at most $1$. We deduce these results from the corresponding more general results on Burch modules \old{(in the sense of Dey-Kobayashi)}.
    }
\end{abstract}

\maketitle

\section{Introduction}\label{Introduction}

Throughout the article, unless specified otherwise, {\it we assume $(R,\m,k)$ to be a commutative Noetherian local ring. All modules over $R$ are assumed to be finitely generated.}

The notions of complexity and curvature of a module were originally introduced by Avramov in \cite{Avr89a, Avr89b, Avr96}; see Definitions~\ref{cx-sequence} and \ref{defn:cx-injcx-curv-injcurv}. These invariants distinguish modules of infinite projective and injective dimensions. Later, in \cite{AB00}, Avramov and Buchweitz extended the notion of complexity to pairs of modules $(M,N)$, measuring the polynomial growth rate of the minimal number $\mu(\Ext^n_R(M,N))$ of generators of the Ext modules as $n$ increases. Dao introduced the analogous notion of Tor-complexity in \cite{Dao07}, which measures the polynomial growth rate of $\mu(\Tor_n^R(M,N))$. Influenced by these, in \cite{DGS24}, Dey, Ghosh and Saha developed the concept of Ext and Tor curvatures for a pair of modules, which measure the exponential growth rates of the Ext and Tor modules, respectively; see \Cref{defn:tcx-tcurv-pair-module}.

Motivated by some striking results of Burch \cite[p.~946, Thm.~5.(ii), and p.~947, Cor.~1.(ii)]{burch}, Dao, Kobayashi, and Takahashi introduced the notion of Burch ideals in \cite{DKT20}. Later, Dey and Kobayashi extended this concept by introducing Burch submodules in \cite{DK23}. A submodule $M$ of an $R$-module $L$ is called Burch if $\m(M :_L \m) \neq \m M$. A Burch submodule of $R$ is called a Burch ideal. An $R$-module $M$ is called Burch if it is isomorphic to a Burch submodule of some $R$-module. There are several well-studied classes of examples of Burch (sub)modules and ideals. We list below some interesting classes.

\begin{example}\label{exam:Burch}
\begin{enumerate}[\rm (1)]
    \item \label{ex1.1.1}
    For a submodule $M$ of an $R$-module $L$, if $\m M \neq 0$, then $\m M$ is a Burch submodule of $L$, see \cite[Ex.~3.4]{DK23}.  So a non-zero ideal $I$ of the form $\fa\m$ (for some ideal $\fa$) is a Burch ideal. In particular, if $\m^n\neq 0$ for some $n\ge 1$, then $\m^n$ is an $\m$-primary Burch ideal.
    \item 
    It follows from \eqref{ex1.1.1} and \cite[Lem.~5.2]{DK23} that Ulrich modules of dimension 1 are Burch.
    \item 
    For two $R$-modules $M$ and $N$, the direct sum $M\oplus N$ is Burch \iff at least one of $M$ or $N$ is Burch. Since $k^{\oplus \mu(\m)} \cong \m/\m^2 = \m(R/\m^2)$, in view of \eqref{ex1.1.1}, we conclude that $k$ is a Burch $R$-module. 
    \item 
    Under some mild conditions on $R$, it is shown in \cite[Prop.~4.6]{GS24} that every integrally closed ideal $I$ such that $\depth(R/I) = 0$ is Burch; in particular, $\m$-primary integrally closed ideals are Burch ideals.
    \item 
    If $R$ is of prime characteristic $p$, $F$-finite, and $\depth(R)=1$, then the $e$-th Frobenius pushforward $\up{e}{R}$ is Burch for all $e\gg 0$, see \cite[2.4.(vi)]{CDKMS}. Recall that $\up{e}{R}$ denotes the $R$-module whose underlying abelian group is $R$, and the $R$-action on $\up{e}{R}$ is given by $r \cdot x=r^{p^e}x$ for $r\in R$ and $x\in \up{e}{R}$.
\end{enumerate}
\end{example}

Burch ideals and modules possess several remarkable homological properties. For instance, they are strong test modules that detect finite homological dimensions via finite vanishing of Ext or Tor (cf.~\cite[p.~946, Thm.~5.(ii)]{burch}, \cite[Prop.~3.16]{DK23} and \cite[Thm.~5.3]{GS24}). They also characterize regular, complete intersection, Gorenstein, and CM (Cohen--Macaulay) local rings in terms of the corresponding homological dimensions (\cite[p.~947, Cor.~1.(ii)]{burch}; \cite[Cor.~3.19]{DK23}; \cite[Thms.~1.1 and 5.1.(6)]{GS24}). Furthermore, over Gorenstein rings, they satisfy the Huneke--Wiegand conjecture \cite[Thm.~1.2]{CDKMS}.

Toward understanding the growth of minimal free and injective resolutions of Burch modules, it follows from a result of Avramov \cite[Thm.~4]{Avr96} that a Burch module (and hence a Burch ideal) has maximal (projective and injective) complexity and curvature; see \Cref{prop:GS24-5.1}. As a consequence of this and \cite[Thm.~3]{Avr96}, it follows that $R$ is a complete intersection whenever the (projective or injective) complexity of some Burch $R$-module $M$ is finite, or the (projective or injective)  curvature of $M$ is at most $1$. 

In view of parts (3) and (4) of \Cref{rmk:cx-curv}, the notions of test modules, extensively studied in the literature (see, for example, \cite{Ramras, Jothilingam, HW, CDT14, DGS24b}), can be reformulated in terms of complexity as follows. Analogous descriptions also hold in terms of curvature.

\begin{definition}
    An $R$-module $M$ is said to be
    \begin{enumerate}[(1)]
        \item Tor-pd-test if for every $R$-module $L$, the condition $\tcx_R(M,L)=0$ implies that $\cx_R(L)=0$. It was introduced in \cite[Defn.~1.1]{CDT14} under the name test module.
        \item
        Ext-pd-test (resp., Ext-id-test) if for every $R$-module $L$, the condition $\cx_R(L,M)=0$ (resp., $\cx_R(M,L)=0$) implies that $\cx_R(L)=0$ (resp., $\injcx_R(L)=0$). In \cite[p.~303 and p.~301]{CDT14}, the subcategories of Ext-pd-test and Ext-id-test modules were denoted by $\operatorname{EP}(R)$ and $\operatorname{EI}(R)$ respectively. 
    \end{enumerate}
\end{definition}
It follows from \cite[Prop.~3.16]{DK23} that a Burch module is both Tor-pd-test and Ext-pd-test, while \cite[Thm.~5.3]{GS24} shows that a Burch submodule of a free module (e.g., a Burch ideal) is Ext-id-test.
        
In this paper, we aim to unify and significantly extend Avramov’s results on the extremal behavior of resolutions of Burch modules, and the aforementioned vanishing results from \cite{DK23} and \cite{GS24} for Ext and Tor modules of Burch modules that detect finite projective and injective dimensions. Indeed, we may directly  rephrase Avramov's results (i.e., \Cref{prop:GS24-5.1}) by saying that $\cx_R(k)=\tcx_R(M,k)=\cx_R(M,k)=\cx_R(k,M)$ and $\curv_R(k)=\tcurv_R(M,k)=\curv_R(M,k)=\curv_R(k,M)$ for every Burch $R$-module $M$. These discussions directly lead us to the following natural questions.

\begin{question}\label{ques-fin}
    Let $L$ and $M$ be $R$-module such that $M$ is Burch.\\
    (1) Is $\cx_R(L)\le \tcx_R(M,L)$? \; (2) Is $\cx_R(L)\le \cx_R(L,M)$? \; (3) Is $\injcx_R(L)\le \cx_R(M,L)$?
\end{question}

When $\tcx_R(M,L)=0$ or $\cx_R(L,M)=0$ or $\cx_R(M,L)=0$, the affirmative answers to \Cref{ques-fin} follow from the vanishing results \cite[Prop.~3.16]{DK23} and \cite[Thm.~5.3]{GS24} as described above, whereas the affirmative answers to \Cref{ques-fin} for $L=k$ correspond to the results of Avramov (\Cref{prop:GS24-5.1}).

A positive answer to Question~\ref{ques-fin} also implies an affirmative answer to Question~\ref{ques-char}.(1).
\begin{question}\label{ques-char}
    Let $M$ and $N$ be Burch modules.
    \begin{enumerate}[\rm (1)]
        \item If $\tcx_R(M,N)$ or $\cx_R(M,N)$ is finite, then is $R$ a complete intersection?
        \item 
        More generally, if $\tcurv_R(M,N)\le 1$ or $\curv_R(M,N)\le 1$, then is $R$ a complete intersection?
    \end{enumerate}
\end{question}


 Under certain annihilation conditions on $\Ext$ and $\Tor$, we establish various (in)equalities between the complexity (resp., curvature) of a pair of (Burch) modules and that of a single module; see Propositions~\ref{prop:tcx-tcurv-CI-ring}, \ref{prop:injcx-injcurv-CI-ring} and \ref{prop:cx-curv-CI-ring},
and Theorems~\ref{thm:tcx-tcurv-CI-ring}, \ref{thm:injcx-injcurv-CI-ring}, \ref{thm:cx-curv-CI-ring}, \ref{thm:R-CM} and \ref{new}. Combining Remark~\ref{rmk:van-cond} with some of these results, and a key insight coming from \Cref{lem:Burch-comp}, we obtain the following main results of this paper which provide partial affirmative answers to Questions~\ref{ques-fin} and \ref{ques-char}.


\begin{theorem}[See Theorems~\ref{thm:R-CM} and \ref{cornew} for stronger results]\label{thm1}
    Assume that $R$ is CM. Let $M$ and $N$ be $R$-modules such that $N$ is Burch. Suppose that either
    \begin{enumerate}[\rm (1)]
        \item 
        $M$ locally has finite projective dimension on the punctured spectrum, or
        \item 
        $\widehat R$ is locally Gorenstein on the punctured spectrum $($e.g., see {\rm \Cref{rmk:loc-gor-punc}}$)$, and $N$ locally has finite projective dimension on the punctured spectrum.
    \end{enumerate}
    Then $\cx_R(M)\le \cx_R(M,N)$ and $\curv_R(M)\le \curv_R(M,N)$.
\end{theorem}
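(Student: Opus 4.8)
The plan is to reduce both cases to the situation where $N$ is a Burch module whose homological behavior is controlled on the punctured spectrum, and then exploit the "strong test module" property of Burch modules together with the spectral sequence / change-of-rings machinery that relates $\Ext_R(M,N)$ to $\Ext_R(M,k)$. First I would pass to the completion: $\cx$, $\curv$, and the Burch property are all insensitive to completion (the latter by the definitions in the excerpt and standard flat base change for $\Ext$), so we may assume $R = \widehat R$ is complete CM. Since $R$ is CM and we are comparing growth rates of $\Ext^n_R(M,N)$ with $\Ext^n_R(M,k)$, the natural device is a local duality / local cohomology argument, or more directly the observation that under the punctured-spectrum hypotheses the $R$-module $\Ext^{\gg 0}_R(M,N)$ is supported only at $\m$, hence has finite length, and one wants to compare $\mu(\Ext^n_R(M,N))$ with $\mu(\Ext^n_R(M,k))=\beta_n(M)$ from below.

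The key input is \Cref{lem:Burch-comp} (cited in the excerpt as providing "a key insight"), which presumably gives, for a Burch module $N$, a map or an exact sequence linking a syzygy of $N$ to a free module and a copy of $k$ — concretely, that $k$ is a direct summand of $\syz^{i}(N \oplus \text{(something)})$ after finitely many steps, or that there is a surjection from a syzygy of $N$ onto $k$ that splits the relevant Ext computation. Granting such a statement, the argument runs: there is $i_0$ and an exact sequence $0 \to k \to \syz^{i_0}(N) \oplus F \to C \to 0$ (or a split surjection), which upon applying $\Hom_R(M,-)$ and taking the long exact sequence yields $\beta_n(M)=\mu\,\Ext^n_R(M,k) \le \mu\,\Ext^{n}_R(M,\syz^{i_0}N) + \mu\,\Ext^{n+1}_R(M,C)$ for $n\gg 0$; the hypotheses on the punctured spectrum force $\Ext^{\gg0}_R(M,C)$ to be either finite length growing no faster, or one iterates. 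Then $\mu\,\Ext^n_R(M,\syz^{i_0}N) = \mu\,\Ext^{n+i_0}_R(M,N)$ up to a shift, so the polynomial (resp. exponential) growth rate of $\beta_n(M)$ is bounded above by that of $\mu\,\Ext^n_R(M,N)$, which is exactly $\cx_R(M)\le\cx_R(M,N)$ and $\curv_R(M)\le\curv_R(M,N)$.

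The two cases are distinguished by which module is required to have finite projective dimension on the punctured spectrum, and this is where local Gorensteinness enters. In case (1), $M$ has finite pd on the punctured spectrum, so $\Ext^{\gg0}_R(M,-)$ is finite length for \emph{every} $N$, and the comparison with $k$ is clean because $\Ext^n_R(M,k)\cong k^{\beta_n(M)}$ and one can use \cite[Thms.~A,B]{DGS24}-style inequalities (the "CM modules" results quoted in the introduction) to bound the single-module complexity by the pair complexity. In case (2), it is $N$ (hence $C$ and the syzygies appearing above) that has finite pd on the punctured spectrum; to run the same local-length argument one needs the error terms $\Ext^{n+1}_R(M,C)$ to be controlled, and local Gorensteinness of $\widehat R$ on $\Spec R \setminus\{\m\}$ is what guarantees, via local duality over the Gorenstein local rings $R_\fp$, that finite pd of $N_\fp$ translates into the vanishing (for $n\gg 0$) of $\Ext^n_{R_\fp}(M_\fp,C_\fp)$ regardless of $M$ — i.e. it transfers the good behavior from $N$ to the Ext modules.

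The main obstacle I expect is precisely the bookkeeping in case (2): one must ensure that after replacing $N$ by the cokernel $C$ (and iterating the Burch exact sequence, or the syzygy argument), the punctured-spectrum hypothesis is preserved and the accumulated "finite length" error terms do not swamp the growth rate of $\beta_n(M)$. This requires knowing that $C$ again locally has finite projective dimension — which follows since $\syz^{i_0}(N)$ does and $k$ trivially has finite (in fact zero, after localizing away from $\m$) everything — and that the boundary maps in the long exact $\Ext$ sequence, localized at any $\fp\ne\m$, are eventually isomorphisms. Handling this cleanly is exactly the content that \Cref{lem:Burch-comp} and \Cref{rmk:van-cond} are set up to provide, so the proof should amount to invoking those with the right indices and then reading off the growth-rate inequality; I would not expect any genuinely new estimate beyond carefully chaining the cited results.
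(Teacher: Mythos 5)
Your overall instinct—use the Burch property to compare $\Ext^n_R(M,N)$ with $\Ext^n_R(M,k)$ from below, and use the punctured-spectrum hypotheses to force annihilation/finite-length conditions—matches the paper's strategy, but the mechanism you guess for the key step is wrong, and the resulting gap is not repairable as written. \Cref{lem:Burch-comp} does not produce a copy of $k$ inside a syzygy of $N$; it says that if $f(M)$ is a Burch submodule of $X$ and $h\colon X\to Y$ is injective, then $h(f(M))$ is a Burch submodule of $Y$, i.e.\ it lets one \emph{change the ambient module}. The statement you are reaching for is \Cref{lem:DK23-3.6}: for a Burch module $N$ with $\depth(N)\ge 1$ there is an $N$-regular $x\in\m$ such that $k$ is a \emph{direct summand} of $N/xN$. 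The splitting is the whole point: it gives $\mu(\Ext^n_R(M,k))\le\mu(\Ext^n_R(M,N/xN))$ with no correction terms, and then $\cx_R(M,N/xN)\le\cx_R(M,N)$ by \cite[Lem.~3.13.(1)]{DGS24} (this is where the hypothesis $\m^h\Ext^{\gg0}_R(M,N)=0$ is actually used). Your non-split sequence $0\to k\to\syz^{i_0}(N)\oplus F\to C\to 0$ leaves an error term $\Ext^{n-1}_R(M,C)$ in the long exact sequence, and ``$\Ext^{\gg0}_R(M,C)$ has finite length'' does not bound its growth rate by that of $\Ext^n_R(M,N)$; your ``or one iterates'' does not terminate. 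You also omit the depth-zero case entirely, which the paper handles by passing to $R[X]_{\langle\m,X\rangle}$ and the module $N'=(N+XL[X])_{\langle\m,X\rangle}$ (\Cref{lem:GS24-lem-4.8}, \Cref{thm:cx-curv-CI-ring}).

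The actual role of \Cref{lem:Burch-comp} and of the CM hypothesis is also absent from your sketch: after completing, the paper takes the ambient module $X\supseteq N$ and embeds it into a module $L$ of \emph{finite injective dimension} (MCM approximation over a complete CM ring); by \Cref{lem:Burch-comp}, $N$ is still Burch in $L$, and since $\Ext^{\gg0}_R(M,L)=0$ the exact sequence $0\to N\to L\to L/N\to 0$ gives $\cx_R(M,L/N)\le\cx_R(M,N)$, whence $\cx_R(M)\le\max\{\cx_R(M,N),\cx_R(M,L/N)\}=\cx_R(M,N)$ by \Cref{thm:cx-curv-CI-ring} (this is \Cref{prop:cx-le-2} feeding into \Cref{thm:R-CM}). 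Finally, in case (2) what you must produce is a \emph{uniform} annihilator $\m^h\Ext^{\gg0}_R(M,N)=0$; localized vanishing or finite length of each individual $\Ext^n$ is not enough, since a priori the required power of $\m$ could grow with $n$. The paper gets uniformity in \Cref{gornew} by using an MCM approximation and canonical duality to replace the pair by one whose second argument's dual is locally free on the punctured spectrum, then invoking \cite[Lem.~3.6]{DGS24}. So the proposal identifies the right landmarks but misassigns the key lemma and leaves the central estimate (controlling the cokernel term) unproved.
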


\begin{theorem}[See Theorem~\ref{new} and \Cref{vanish} for stronger results]\label{thm2}
Let $M$ and $N$ be $R$-modules such that $M$ is a Burch submodule of some $R$-module $X$. Assume that $\gdim_R(X)<\infty$, or $X$ is torsionless. Then, the following hold.
\begin{enumerate}[\rm (1)]
    \item 
    If $M$ or $N$ locally has finite projective dimension on the punctured spectrum, then
    \[
    \cx_R(N)\le \tcx_R(M,N) \; \mbox{ and } \; \curv_R(N)\le \tcurv_R(M,N)
    \]
    \item
    If $M$ locally has finite projective dimension on the punctured spectrum, then
    \[
    \injcx_R(N)\le \cx_R(M,N)\; \mbox{ and } \;\injcurv_R(N)\le \curv_R(M,N)
    \]
    \item 
    For some $n > \max\{\depth(N),\depth(R)\}$, if $\Ext^i_R(M,N)=0$ for $i=n-1,n,n+1$, then $\id_R(N)<\infty$.
\end{enumerate}
\end{theorem}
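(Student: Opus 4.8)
The plan is to derive this from Theorem~\ref{new}, with \Cref{lem:Burch-comp} and \Cref{rmk:van-cond} supplying its two hypotheses; below I indicate how each ingredient enters and then how the inequalities fall out.

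First I would assemble the inputs. Via \Cref{rmk:van-cond}, the homological-dimension hypotheses on the punctured spectrum become finite-length statements: if $M$ or $N$ has finite projective dimension at every non-maximal prime, then $\Tor^R_i(M,N)$ has finite length for all $i\gg0$; and the finiteness hypothesis on $M$ alone already forces $\Ext^i_R(M,N)$ to have finite length for $i\gg0$, whereas for $\Tor$ the symmetry lets either hypothesis do the job (this explains the asymmetry between parts (1) and (2)). These are precisely the annihilation conditions under which Theorem~\ref{new}, and the general Propositions~\ref{prop:tcx-tcurv-CI-ring} and \ref{prop:cx-curv-CI-ring}, apply. Second, since $M$ is a Burch submodule of $X$ with $\gdim_R X<\infty$ or $X$ torsionless, \Cref{lem:Burch-comp} compares the syzygies of $M$ with those of $k$: after passing to $\widehat R$ if necessary, it produces fixed integers $c,c'$ with $\Omega^c_R(k)$ a direct summand of $\Omega^{c'}_R(M)$. (If $R$ is regular there is nothing to prove, since then $\pd_R N<\infty$ and all the left-hand quantities vanish; if $R$ is not regular, then $\pd_R M=\infty$, because a Burch module of finite projective dimension is a test module of finite projective dimension, forcing regularity by \cite[Prop.~3.16]{DK23} --- so this is exactly the regime in which \Cref{lem:Burch-comp} carries content.)

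Granting these, the rest is growth-rate bookkeeping, carried out after the harmless reduction to $\widehat R$: all of $\cx_R,\curv_R,\injcx_R,\injcurv_R$ and the pair-invariants $\tcx_R(M,N),\tcurv_R(M,N),\cx_R(M,N),\curv_R(M,N)$ are unchanged under $-\otimes_R\widehat R$, and $\widehat M$ remains a Burch submodule of $\widehat X$ with the same conditions. For part (1): applying $\Tor^R_i(-,N)$ to $\Omega^c_R(k)\mid\Omega^{c'}_R(M)$ and using the dimension-shift isomorphisms $\Tor^R_i(\Omega^c_R k,N)\cong\Tor^R_{i+c}(k,N)$ and $\Tor^R_i(\Omega^{c'}_R M,N)\cong\Tor^R_{i+c'}(M,N)$ for $i\ge1$ shows that $\Tor^R_{i+c}(k,N)$ is a direct summand of $\Tor^R_{i+c'}(M,N)$; hence $\beta^R_{i+c}(N)=\dim_k\Tor^R_{i+c}(k,N)\le\mu\bigl(\Tor^R_{i+c'}(M,N)\bigr)$ for $i\ge1$. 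As $c-c'$ is a fixed integer, comparing polynomial growth rates gives $\cx_R(N)=\tcx_R(k,N)\le\tcx_R(M,N)$, and comparing exponential growth rates gives $\curv_R(N)\le\tcurv_R(M,N)$. Part (2) is the same with $\Ext$ in place of $\Tor$: one gets $\mu^{i+c}_R(N)=\dim_k\Ext^{i+c}_R(k,N)\le\mu\bigl(\Ext^{i+c'}_R(M,N)\bigr)$, i.e. the Bass numbers of $N$ are dominated, up to the fixed shift, by $\mu(\Ext^\bullet_R(M,N))$, so $\injcx_R(N)=\cx_R(k,N)\le\cx_R(M,N)$ and $\injcurv_R(N)\le\curv_R(M,N)$. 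The finite-length conclusions from the first step are what make these comparisons legitimate through Theorem~\ref{new}.

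The main obstacle is \Cref{lem:Burch-comp} itself: the assertion that the homological behaviour of $k$ is governed by that of a Burch module. That is where the Burch hypothesis, the condition on $X$ (torsionless or finite G-dimension), and most likely a completeness/Krull--Schmidt reduction are genuinely used; everything downstream --- the reduction to $\widehat R$, the reduction to the non-regular case, and the conversion of ``a summand of one module is a summand of the other'' into inequalities of complexities and curvatures --- is formal. The remaining point to watch is that the shifts $c,c'$ are absolute constants, independent of homological degree, so that they do not perturb any polynomial or exponential growth rate, and that the finite-length hypotheses extracted from \Cref{rmk:van-cond} match exactly the hypotheses required by Theorem~\ref{new} and by Propositions~\ref{prop:tcx-tcurv-CI-ring} and \ref{prop:cx-curv-CI-ring}.
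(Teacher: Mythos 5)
There is a genuine gap, and it sits exactly at the step you yourself flag as ``the main obstacle.'' You assert that \Cref{lem:Burch-comp} ``produces fixed integers $c,c'$ with $\Omega^c_R(k)$ a direct summand of $\Omega^{c'}_R(M)$.'' That is a misreading of the lemma: \Cref{lem:Burch-comp} says only that the Burch property of a submodule survives composing with a further injection ($h(f(M))$ is Burch in $Y$ whenever $f(M)$ is Burch in $X$ and $h\colon X\to Y$ is injective). It says nothing about a syzygy of $k$ splitting off a syzygy of $M$, no such statement is proved or used anywhere in the paper, and it is not a known consequence of the Burch hypothesis in this generality (the syzygy-splitting results of Dao--Kobayashi--Takahashi and Dey--Kobayashi concern syzygies of $L/M$, not of $M$, and need a separate depth-zero analysis). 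A further warning sign: if your claimed splitting held, your bookkeeping would prove the inequalities with no annihilation hypotheses at all, whereas those hypotheses are essential to the actual argument. Since you leave this step as a black box, the proof is incomplete at its only non-formal point.

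The paper's route is different. The hypothesis $\gdim_R(X)<\infty$ (resp.\ $X$ torsionless) is used, via \ref{para:Aus-Buch-Thm-1.1} (resp.\ the trivial embedding into a free module), to embed $X$ --- and hence $M$ --- into a module $L$ of finite projective dimension; \Cref{lem:Burch-comp} is invoked precisely here, to conclude that $M$ remains a Burch submodule of the new ambient module $L$. Since $\Tor^R_{\gg0}(L,N)=0=\Ext^{\gg0}_R(L,N)$, Propositions~\ref{prop:tcx-le} and \ref{prop:cx-le} apply with their comparison hypotheses trivially satisfied; these in turn rest on Theorems~\ref{thm:tcx-tcurv-CI-ring} and \ref{thm:injcx-injcurv-CI-ring}, which handle $\depth(M)=0$ by passing to $R[X]_{\langle\m,X\rangle}$ and then use \Cref{lem:DK23-3.6} (for a Burch module of positive depth, $k$ is a direct summand of $M/xM$ for a suitable regular element $x$) --- not a syzygy of $k$ inside a syzygy of $M$. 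The annihilation conditions you correctly extract from \Cref{rmk:van-cond} are consumed in \Cref{lem:cx-curv-s.e.s} to control $\Tor(L/M,N)$ and $\Ext(L/M,N)$ along $0\to M\to L\to L/M\to 0$. To salvage your approach you would have to prove the syzygy-splitting claim independently, which is a substantially stronger and unestablished assertion.
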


Note that \Cref{thm2}.(3) is a variation of the injective dimension test result in \cite[Thm.~5.3]{GS24}, and it considerably strengthens \cite[Cor.~5.4]{GS24}.

Recall that $R$ is said to be a {\it complete intersection} of codimension $c$ if its completion $\widehat{R} = Q/({\bf f})$, where $(Q,\fn)$ is a regular local ring, and ${\bf f} \in \fn^2$ is a $Q$-regular sequence of length $c$. If $R$ is complete intersection of codimension at most $1$, then $R$ is called a {\it hypersurface}. In Propositions~\ref{prop:tcx-tcurv-CI-ring}, \ref{prop:injcx-injcurv-CI-ring} and \ref{prop:cx-curv-CI-ring},
and Theorems~\ref{thm:tcx-tcurv-CI-ring}, \ref{thm:injcx-injcurv-CI-ring}, \ref{thm:cx-curv-CI-ring}, \ref{thm:R-CM} and \ref{new}, we also characterize complete intersection local rings (including hypersurface rings) in terms of (Ext and Tor) complexity and curvature of pairs of Burch modules and ideals. Certain special cases of these results yield the following corollaries.

\begin{corollary}\label{cor:char-CI-via-burch-ideals}
    Let $I$ and $J$ be two Burch ideals of $R$ $($e.g., $I$ and $J$ can be any of the ideals as described in {\rm \Cref{exam:Burch}}$)$. Suppose that $I$ is $\mathfrak{m}$-primary. 
    Then $\tcx_R(I,J) = \cx_R(I,J) = \cx_R(k)$. Moreover, the following conditions are equivalent:
    \begin{enumerate}[\rm (1)]
        \item $R$ is complete intersection $($resp., of codimension $c$$)$.
        \item $\tcx_R(I,J) < \infty$ $($resp., $\tcx_R(I,J) = c < \infty$$)$.
        \item $\cx_R(I,J) < \infty$ $($resp., $\cx_R(I,J) = c < \infty$$)$.
        \item $\tcurv_R(I,J) \le 1$.
        \item $\curv_R(I,J) \le 1$.
    \end{enumerate}
    Consequently, $R$ is a hypersurface $\Longleftrightarrow$ $\tcx_R(I,J) \le 1$ $\Longleftrightarrow$ $\cx_R(I,J) \le 1$.
\end{corollary}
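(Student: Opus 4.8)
The plan is to derive \Cref{cor:char-CI-via-burch-ideals} from the general theorems on Burch modules cited in the excerpt (\Cref{thm1}, \Cref{thm2}, and the stronger versions \Cref{thm:R-CM} and \Cref{new}), together with \Cref{prop:GS24-5.1} which says a Burch module has maximal complexity and curvature. The key preliminary observation is that an $\m$-primary ideal $I$ is Burch, is an ideal of $R$ (hence torsionless, so \Cref{thm2} applies with $X = R$), and $R/I$ has finite length; consequently $I$ locally has finite projective dimension on the punctured spectrum, because $I_\fp = R_\fp$ for every $\fp \in \Spec(R) \setminus \{\m\}$. Likewise $J$, being a Burch ideal, is torsionless and we only need $I$ (not $J$) to have finite projective dimension on the punctured spectrum to invoke the relevant cases of the theorems. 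So hypothesis-checking is routine.

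First I would record the chain of inequalities. Since $I$ is Burch, $\m$-primary, and locally of finite projective dimension off $\m$, and $J$ is a Burch submodule of the torsionless module $R$, \Cref{thm2}(1) gives $\cx_R(J) \le \tcx_R(I,J)$ and $\curv_R(J) \le \tcurv_R(I,J)$; by \Cref{prop:GS24-5.1}, $\cx_R(J) = \cx_R(k)$ and $\curv_R(J) = \curv_R(k)$, so $\tcx_R(I,J) \ge \cx_R(k)$ and $\tcurv_R(I,J) \ge \curv_R(k)$. For the reverse bound I would use that $I$ is $\m$-primary: a standard argument (e.g.\ via \Cref{lem:Burch-comp} or the cited results of \cite{DGS24}) shows $\tcx_R(I,J) \le \cx_R(k)$ and $\tcurv_R(I,J) \le \curv_R(k)$ whenever the first argument is $\m$-primary. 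Combining, $\tcx_R(I,J) = \cx_R(k)$ and $\tcurv_R(I,J) = \curv_R(k)$. The same two-sided argument with \Cref{thm2}(2) and the identities $\injcx_R(J) = \cx_R(k)$, $\injcurv_R(J) = \curv_R(k)$ (valid over a ring that is CM, or after reducing to that case; note an $\m$-primary Burch ideal forces $\depth R \le 1$-type constraints that one handles as in \Cref{thm:R-CM}) yields $\cx_R(I,J) = \cx_R(k)$ and $\curv_R(I,J) = \curv_R(k)$. This establishes the displayed equalities before the list.

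Next I would translate these equalities into the stated equivalences using the Avramov dichotomy for the residue field: $R$ is complete intersection of codimension $c$ \iff $\cx_R(k) = c < \infty$ \iff $\curv_R(k) \le 1$, with $\cx_R(k) = \embdim R - \depth R$ when $R$ is complete intersection. Thus (1)$\Leftrightarrow$(2): $\tcx_R(I,J) < \infty \Leftrightarrow \cx_R(k) < \infty \Leftrightarrow R$ is complete intersection, and the codimension count matches since $\cx_R(k) = c$ in that case; (1)$\Leftrightarrow$(3) is identical using $\cx_R(I,J) = \cx_R(k)$; (1)$\Leftrightarrow$(4) and (1)$\Leftrightarrow$(5) follow from $\tcurv_R(I,J) = \curv_R(I,J) = \curv_R(k)$ and the fact that $\curv_R(k) \le 1 \Leftrightarrow R$ is complete intersection (indeed $\curv_R(k) < 1$ is impossible unless $R$ is regular, and $\curv_R(k) = 1$ exactly for non-regular complete intersections). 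The final ``Consequently'' line about hypersurfaces is the codimension-$\le 1$ instance: $R$ is a hypersurface \iff $\cx_R(k) \le 1$ \iff $\tcx_R(I,J) \le 1$ \iff $\cx_R(I,J) \le 1$.

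The main obstacle I anticipate is the reverse inequality $\tcx_R(I,J) \le \cx_R(k)$ (and its curvature and Ext-analogues): the theorems quoted only give lower bounds $\cx_R(N) \le \tcx_R(M,N)$, so the upper bound must come from a separate input — presumably that when $I$ is $\m$-primary one has a short exact sequence or a change-of-rings spectral sequence relating $\Tor^R_\bullet(I,J)$ to $\Tor^R_\bullet(k, -)$ up to modules supported on $\Spec R \setminus \{\m\}$, which contribute only finitely, forcing $\tcx_R(I,J) \le \tcx_R(k,J) = \cx_R(J) = \cx_R(k)$; this is the content I would attribute to \Cref{lem:Burch-comp} and \Cref{rmk:van-cond} and cite accordingly. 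A secondary subtlety is ensuring the injective-complexity identities hold without assuming $R$ is Gorenstein; here I would lean on \Cref{thm:R-CM} and the observation that an $\m$-primary Burch ideal already imposes enough structure (as in \Cref{exam:Burch} and the surrounding discussion) that the CM/Gorenstein-on-the-punctured-spectrum hypotheses of \Cref{thm1} and \Cref{thm2}(2) are either automatic or reducible to, and the bookkeeping of the codimension statement $c = \embdim R - \depth R$ is then routine.
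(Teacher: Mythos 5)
Your lower bounds are exactly the paper's: apply \Cref{thm2} with $X=R$ (torsionless) and use that $I$, being $\m$-primary, is locally free on the punctured spectrum, then convert $\cx_R(J)$ and $\injcx_R(J)$ to $\cx_R(k)$ via \Cref{prop:GS24-5.1}. (Your worry about needing $R$ to be CM for $\injcx_R(J)=\cx_R(k)$ is unfounded: \Cref{prop:GS24-5.1} is unconditional, and an $\m$-primary Burch ideal imposes no depth constraint on $R$.) The genuine gap is the one you yourself flag as the ``main obstacle'': the reverse inequalities $\tcx_R(I,J)\le\cx_R(k)$ and $\cx_R(I,J)\le\cx_R(k)$. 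Your proposed mechanism --- a spectral sequence relating $\Tor^R_\bullet(I,J)$ to $\Tor^R_\bullet(k,-)$ ``up to modules supported on $\Spec R\setminus\{\m\}$, which contribute only finitely'' --- is not an argument: finitely generated modules have closed support, so ``supported off $\m$'' means zero, and neither \Cref{lem:Burch-comp} nor \Cref{rmk:van-cond} says anything of this kind. Note also that $\mu$ of a submodule is not controlled by $\mu$ of the ambient module, so there is no free upper bound $\tcx_R(M,N)\le\min\{\cx_R M,\cx_R N\}$ outside the complete intersection setting.

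The paper closes this gap with a simple dichotomy that you should adopt: if $\cx_R(k)=\infty$ the displayed equalities are immediate from the lower bounds; if $\cx_R(k)<\infty$ then $R$ is a complete intersection by \Cref{prop:properties-cx-curv}.(2), and over a complete intersection \Cref{prop:AB00-II-DGS24-7.9} (Avramov--Buchweitz/Dao/DGS support-variety bounds, applicable since $\m^h\Tor^R_{\gg0}(I,J)=0$ by local freeness of $I$ off $\m$) gives $\tcx_R(I,J),\,\cx_R(I,J)\le\cx_R(k)$. The same dichotomy is why the paper never claims $\tcurv_R(I,J)=\curv_R(k)$ as you do: only the lower bound on curvature is needed for $(4)\Rightarrow(1)$ and $(5)\Rightarrow(1)$, with the converses coming from \Cref{prop:AB00-II-DGS24-7.9}. (A more elementary route to the upper bounds does exist --- filter $R/I$ by copies of $k$ and apply \Cref{lem:cx-curv-s.e.s} together with $\Tor_n^R(I,J)\cong\Tor_{n+1}^R(R/I,J)$ --- and is closer in spirit to what you gesture at, but you would have to actually write it; as it stands your step is unsupported.)
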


For $R$-modules $M$ and $N$, note that $\tcx_R(M,N) = \tcx_R(N,M)$. However, $\cx_R(M,N)$ might not be the same as $\cx_R(N,M)$.  So, a natural question arises whether the conclusions of \Cref{cor:char-CI-via-burch-ideals} hold if $J$ is $\mathfrak{m}$-primary instead of that $I$ is $\mathfrak{m}$-primary? The next result particularly ensures that the answer to this question is affirmative when $R$ is Gorenstein.


\begin{corollary}\label{cor:char-CI-via-burch-ideals-in-Gor-ring}
    Let $R$ be CM such that $\widehat R$ is locally Gorenstein on the punctured spectrum $($e.g., the rings in {\rm \Cref{rmk:loc-gor-punc}}$)$. Let $I$ and $J$ be two Burch ideals of $R$. Suppose that $J$ is $\mathfrak{m}$-primary. Then $\cx_R(I,J) = \cx_R(k)$. Moreover, the following conditions are equivalent:
    \begin{enumerate}[\rm (1)]
        \item $R$ is complete intersection $($resp., of codimension $c$$)$.
        \item $\cx_R(I,J) < \infty$ $($resp., $\cx_R(I,J) = c < \infty$$)$.
        \item $\curv_R(I,J) \le 1$.
    \end{enumerate}
    Consequently, $R$ is a hypersurface $\Longleftrightarrow$ $\cx_R(I,J) \le 1$.
\end{corollary}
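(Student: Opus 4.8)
The plan is to determine $\cx_R(I,J)$ and $\curv_R(I,J)$ precisely by squeezing each between two copies of the corresponding invariant of the residue field,
\[
\cx_R(k)\ \le\ \cx_R(I,J)\ \le\ \cx_R(k),\qquad \curv_R(k)\ \le\ \curv_R(I,J)\ \le\ \curv_R(k),
\]
and then to extract the remaining statements from the classical fact that $R$ is a complete intersection of codimension $c$ if and only if $\cx_R(k)=c<\infty$ if and only if $\curv_R(k)\le 1$ (see \cite{Avr96}). Two facts do the real work: first, a Burch ideal has maximal complexity and curvature, so $\cx_R(I)=\cx_R(k)$ and $\curv_R(I)=\curv_R(k)$ by \Cref{prop:GS24-5.1}; and second, the comparison inequalities $\cx_R(M)\le\cx_R(M,N)$ and $\curv_R(M)\le\curv_R(M,N)$ furnished by \Cref{thm1}.

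For the lower bounds I would argue as follows. Since $J$ is $\m$-primary, $R/J$ has finite length, hence $J_\fp=R_\fp$ is free for every $\fp\in\Spec R\setminus\{\m\}$; in particular $J$ has finite projective dimension locally on the punctured spectrum. Thus the hypotheses of the second case of \Cref{thm1} are all satisfied --- $R$ is CM, $\widehat R$ is locally Gorenstein on the punctured spectrum, the module $N:=J$ is Burch, and $J$ has finite projective dimension on the punctured spectrum --- so, taking $M:=I$, that theorem gives $\cx_R(I)\le\cx_R(I,J)$ and $\curv_R(I)\le\curv_R(I,J)$. As $I$ is itself Burch, \Cref{prop:GS24-5.1} converts these into $\cx_R(k)\le\cx_R(I,J)$ and $\curv_R(k)\le\curv_R(I,J)$.

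For the upper bounds I would use the comparison $\cx_R(M,N)\le\cx_R(M)$ and $\curv_R(M,N)\le\curv_R(M)$ available from \Cref{prop:cx-curv-CI-ring}. The mild annihilation hypothesis needed there holds here: since $\widehat R$ is locally Gorenstein on the punctured spectrum, $R_\fp$ is Gorenstein for $\fp\ne\m$, so the free module $J_\fp$ has finite injective dimension there as well, whence $\Ext^n_R(I,J)$ has finite length for all $n\gg 0$ (cf.\ \Cref{rmk:van-cond}). Applying \Cref{prop:GS24-5.1} to the Burch ideal $I$ once more yields $\cx_R(I,J)\le\cx_R(I)=\cx_R(k)$ and $\curv_R(I,J)\le\curv_R(I)=\curv_R(k)$. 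Combined with the previous paragraph this gives the displayed estimates, hence $\cx_R(I,J)=\cx_R(k)$ and $\curv_R(I,J)=\curv_R(k)$, which is the opening assertion of the corollary.

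The rest is then formal. Feeding these two equalities into the classical characterization of complete intersections via $k$ gives the equivalence of (1), (2) and (3) at once, including the codimension refinement $\cx_R(I,J)=c$; and for the concluding line, if $\cx_R(I,J)\le 1$ then $\cx_R(I,J)<\infty$, so $R$ is complete intersection with $\codim R=\cx_R(I,J)\le 1$, i.e.\ a hypersurface, while conversely a hypersurface has $\cx_R(I,J)=\cx_R(k)=\codim R\le 1$. I expect the one genuinely delicate point to be the invocation of \Cref{thm1}: because $\cx_R(M,N)$ is not symmetric, the $\m$-primary ideal $J$ is forced into the second argument, and it is precisely the hypothesis that $\widehat R$ is locally Gorenstein on the punctured spectrum which upgrades ``$J$ is $\m$-primary'' to the local finiteness --- of projective, equivalently injective, dimension --- of $J$ on the punctured spectrum that both the second case of \Cref{thm1} and the upper-bound comparison require. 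Everything else is bookkeeping already carried out in the cited results, along the lines of the proof of \Cref{cor:char-CI-via-burch-ideals}.
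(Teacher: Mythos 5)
Your lower bound is exactly the paper's: $J$ being $\m$-primary is locally free off $\m$, so \Cref{cornew} (case (2) of \Cref{thm1}) applies with $N=J$, and \Cref{prop:GS24-5.1} for the Burch ideal $I$ upgrades $\cx_R(I)\le\cx_R(I,J)$ to $\cx_R(k)\le\cx_R(I,J)$, likewise for curvature. The gap is in your upper bound. You cite \Cref{prop:cx-curv-CI-ring} for ``$\cx_R(M,N)\le\cx_R(M)$ and $\curv_R(M,N)\le\curv_R(M)$,'' but that proposition asserts the \emph{opposite} inequalities $\cx_R(M)\le\cx_R(M,N)$, $\curv_R(M)\le\curv_R(M,N)$ unconditionally, and yields the direction you want only under the additional hypothesis that $R$ is a complete intersection (and it also requires $\depth(N)\ge 1$, which fails for an $\m$-primary $J$ when $R$ is Artinian). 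No result in the paper gives $\cx_R(M,N)\le\cx_R(M)$ over an arbitrary local ring, and indeed the unconditional inequality $\curv_R(I,J)\le\curv_R(k)$ that your squeeze would produce is not established anywhere here --- note the corollary deliberately claims only $\cx_R(I,J)=\cx_R(k)$ and \emph{not} $\curv_R(I,J)=\curv_R(k)$.

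The repair is the case split the paper makes: if $\cx_R(k)=\infty$ the lower bound already forces $\cx_R(I,J)=\infty=\cx_R(k)$; if $\cx_R(k)<\infty$ then $R$ is a complete intersection by \Cref{prop:properties-cx-curv}.(2), and only \emph{then} does \Cref{prop:AB00-II-DGS24-7.9}.(1) (or the equality clause of \Cref{prop:cx-curv-CI-ring}) give $\cx_R(I,J)\le\min\{\cx_R(I),\cx_R(J)\}\le\cx_R(k)$ and $\curv_R(I,J)\le 1$. With that correction your deduction of the equivalences and of the hypersurface statement goes through: the implications $(2)\Rightarrow(1)$ and $(3)\Rightarrow(1)$ need only the lower bounds $\cx_R(k)\le\cx_R(I,J)$ and $\curv_R(k)\le\curv_R(I,J)$, while $(1)\Rightarrow(2),(3)$ use the complete-intersection upper bounds just described.
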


The article is organized as follows. In Section~\ref{Preliminaries}, we introduce the basic notation, recall the necessary definitions, and collect the lemmas and propositions used throughout the paper. Section~\ref{Main results} contains the proofs of the main results, including Theorems~\ref{thm1} and \ref{thm2}, as well as Corollaries~\ref{cor:char-CI-via-burch-ideals} and \ref{cor:char-CI-via-burch-ideals-in-Gor-ring}. Finally, Section~\ref{sec:exam} presents several examples illustrating the main results and concludes the paper with some further questions.

\subsection*{Acknowledgments} Souvik Dey was partly supported by the Charles University Research Center program No.UNCE/24/SCI/022 and a grant GA\v{C}R 23-05148S from the Czech Science Foundation.
This work is part of the third named author's Ph.D.~thesis. She would like to thank the Government of India for financial support through the Prime Minister Research Fellowship for her Ph.D.

\section{Preliminaries}\label{Preliminaries}

In this section, we recall essential notation, definitions, and fundamental properties that are used in the rest of the article. For an $R$-module $M$, the $n$th Betti number of $M$ is defined to be $\beta_n^R(M):=\rank_k(\Ext^n_R(M,k))$, while the $n$th Bass number of $M$ is defined by $\mu_R^n(M):=\rank_k(\Ext^n_R(k,M))$. For $n\ge 0$, the $n$th syzygy of $M$ in a minimal free resolution is denoted by $\syz^n_R(M)$. We denote by $\mu_R(M)$ and $\lambda_R(M)$ the minimal number of generators and the length of $M$, respectively. Set $\ann_R(M):=\{r\in R: rx = 0\mbox{ for all }x\in M\}$. Let $\widehat{(-)}$ denote the $\m$-adic completion functor. For two $R$-modules $M$ and $N$, the notation $\Ext^{\gg 0}_R(M,N)$ (resp., $\Tor_{\gg 0}^R(M,N)$) is used to denote all $R$-modules $\Ext^{n}_R(M,N)$ (resp., $\Tor_{n}^R(M,N)$) for sufficiently large $n$.

\begin{definition}\label{cx-sequence}
    Let $\{a_n\}$ be a sequence of non-negative real numbers.
    \begin{enumerate}[\rm (1)]
        \item 
        The complexity of $\{a_n\}$, denoted by $\cx\{a_n\}$, is defined to be the smallest integer $b\ge0$ such that $a_n \le \alpha n^{b-1}$ for all $n\gg 0$ and for some constant $\alpha > 0$. If no such $b$ exists, then $\cx\{a_n\}:= \infty$.
        \item 
        The curvature of $\{a_n\}$ is defined by $\curv\{a_n\}:= \displaystyle\limsup_{n \to \infty} \sqrt[n]{a_n}$.
    \end{enumerate}
\end{definition}

\begin{lemma}{\rm \cite[Lem.~2.7 and Rmk.~2.2.(1)]{DGS24}}\label{lem:DGS-2.7-2.2}
    For sequences $\{x_n\}$ and $\{y_n\}$ of non-negative integers,  \begin{enumerate}[\rm (1)]
        \item $\cx(\{x_n + y_n\})= \sup \{\cx(\{x_n\}), \cx(\{y_n\})\}$.
        \item $\curv(\{x_n + y_n\})= \sup \{\curv(\{x_n\}), \curv(\{y_n\})\}$.
    \end{enumerate}
\end{lemma}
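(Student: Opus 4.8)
The plan is to prove each of (1) and (2) by verifying the inequalities ``$\le$'' and ``$\ge$'' separately, using two elementary observations: that $x_n+y_n$ dominates each of $x_n$ and $y_n$, and that $x_n+y_n$ is in turn dominated by $2\max\{x_n,y_n\}$. Throughout I abbreviate $b=\cx\{x_n\}$ and $c=\cx\{y_n\}$, and $\beta=\curv\{x_n\}$ and $\gamma=\curv\{y_n\}$; by symmetry I may assume $b\le c$ and $\beta\le\gamma$, so that the asserted suprema are $c$ and $\gamma$ respectively.

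First I would record a monotonicity principle: if $\{a_n\}$ and $\{a'_n\}$ are sequences of non-negative integers with $a_n\le a'_n$ for all $n\gg0$, then $\cx\{a_n\}\le\cx\{a'_n\}$ and $\curv\{a_n\}\le\curv\{a'_n\}$. The first follows directly from Definition~\ref{cx-sequence}(1), since any bound $a'_n\le\alpha n^{d-1}$ valid for $n\gg0$ is also a bound for $a_n$; the second follows from monotonicity of $\limsup$ applied to $\sqrt[n]{a_n}\le\sqrt[n]{a'_n}$. Applying this with $a'_n=x_n+y_n$ and $a_n$ equal to $x_n$ or $y_n$ immediately gives $\cx\{x_n+y_n\}\ge\sup\{b,c\}$ and $\curv\{x_n+y_n\}\ge\sup\{\beta,\gamma\}$, settling the ``$\ge$'' halves of both statements.

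For the ``$\le$'' half of (1), I may assume $c<\infty$. Choose $\alpha_1,\alpha_2>0$ with $x_n\le\alpha_1 n^{b-1}$ and $y_n\le\alpha_2 n^{c-1}$ for all $n\gg0$; since $b\le c$ and $n\ge1$ force $n^{b-1}\le n^{c-1}$, we obtain $x_n+y_n\le(\alpha_1+\alpha_2)n^{c-1}$ for $n\gg0$, hence $\cx\{x_n+y_n\}\le c$. For the ``$\le$'' half of (2), I may assume $\gamma<\infty$. From $x_n+y_n\le2\max\{x_n,y_n\}$ and the fact that $t\mapsto t^{1/n}$ is increasing on $[0,\infty)$, we get $\sqrt[n]{x_n+y_n}\le2^{1/n}\max\{\sqrt[n]{x_n},\sqrt[n]{y_n}\}$; taking $\limsup$, using $2^{1/n}\to1$, and using that for every $\varepsilon>0$ both $\sqrt[n]{x_n}$ and $\sqrt[n]{y_n}$ are eventually below $\max\{\beta,\gamma\}+\varepsilon=\gamma+\varepsilon$, we conclude $\curv\{x_n+y_n\}\le\gamma$. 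Combining the two halves proves (1) and (2).

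This lemma is entirely elementary, so I do not anticipate a genuine obstruction; the only points deserving care are the degenerate cases (complexity or curvature equal to $\infty$, for which the relevant inequality is vacuous, and a sequence of non-negative integers being eventually zero, which is exactly the case of complexity $0$, or of curvature $0$) and the short $\limsup$-of-a-maximum estimate used above.
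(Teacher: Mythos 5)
Your proof is correct. Note that the paper itself offers no argument for this lemma: it is imported verbatim by citation from \cite[Lem.~2.7 and Rmk.~2.2.(1)]{DGS24}, so there is no internal proof to compare against. Your self-contained argument — monotonicity of $\cx$ and $\curv$ under termwise domination for the ``$\ge$'' halves, the bound $x_n+y_n\le(\alpha_1+\alpha_2)n^{c-1}$ for the ``$\le$'' half of (1), and $x_n+y_n\le 2\max\{x_n,y_n\}$ together with $2^{1/n}\to 1$ for the ``$\le$'' half of (2) — is exactly the standard route, and you correctly flag the only delicate points (the vacuous infinite cases, and complexity $0$ meaning eventually zero). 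Nothing to fix.
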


The complexity of a module was introduced by Avramov in \cite[Def.~3.1]{Avr89a} and \cite[Def.~1.1]{Avr89b}, while the injective complexity, referred to as plexity, was defined in \cite[Def.~5.1]{Avr89b}. The study of (injective) curvature was initiated in \cite[Sec.~1]{Avr96}.

\begin{definition}[Avramov]\label{defn:cx-injcx-curv-injcurv}
    Let $M$ be an $R$-module.
    \begin{enumerate}[\rm (1)]
        \item 
        The complexity and injective complexity of $M$ are defined to be $$\cx_R(M)=\cx\{\beta^R_n(M)\} \mbox{ and } \injcx_R(M)=\cx\{\mu_R^n(M)\}.$$
        \item 
        The curvature and injective curvature of $M$ are defined by
        $$\curv_R(M)= \curv\{\beta^R_n(M)\} \quad \mbox{and} \quad \injcurv_R(M)= \curv\{\mu_R^n(M)\}.$$
    \end{enumerate}
\end{definition}

The concepts of complexity and curvature were extended from a single module to a pair of modules by Avramov-Buchweitz, Dao and Dey-Ghosh-Saha.

\begin{definition}\label{defn:tcx-tcurv-pair-module}
    Let $M$ and $N$ be two $R$-modules.
    \begin{enumerate}[\rm (1)]
        \item 
        \cite[p.~286]{AB00} The complexity (or Ext-complexity) of $M$ and $N$ is defined to be $$\cx_R(M,N):=\cx\{\mu_R(\Ext_R^n(M,N))\}_{n\ge 0}.$$
        \item 
        \cite[Def.~3.2]{DGS24} The curvature (or Ext-curvature) of $M$ and $N$ is defined by
        $$\curv_R(M,N):=\curv\{\mu_R(\Ext_R^n(M,N))\}_{n \ge 0}.$$
        \item 
        {\rm \cite[p.~4]{Dao07}} The Tor-complexity of $M$ and $N$ is defined to be
        $$\tcx_R(M,N):=\cx\{\mu_R(\Tor^R_n(M,N))\}_{n\ge 0}.$$
        \item 
        {\rm \cite[Def.~3.3]{DGS24}} The Tor-curvature of $M$ and $N$ is defined by
        $$\tcurv_R(M,N):=\curv\{\mu_R(\Tor^R_n(M,N))\}_{n \ge 0}.$$
    \end{enumerate}
\end{definition}

\begin{remark}
    The notion of Tor-curvature was also defined in \cite[3.8]{Sega} in terms of the lengths of Tor modules. However, the definitions in terms of $\mu$ and $\lambda$ are equivalent whenever $\mathfrak{m}^h\Tor_{\gg0}^R(M,N)=0$; see \cite[Lem.~3.6.(2)]{DGS24}. In fact, in each of the definitions given in \Cref{defn:tcx-tcurv-pair-module}, $\mu$ can be replaced by $\lambda$ under suitable annihilation conditions; cf.~\cite[Lem.~2.5]{DV09} and \cite[3.4 and 3.6]{DGS24}.
\end{remark}

\begin{remark}\label{rmk:cx-curv}
    For $R$-modules $M$ and $N$, the following hold:
    \begin{enumerate}[\rm (1)]
        \item 
        $\cx_R(M) = \tcx_R(M,k) = \cx_R(M,k)$ and $\injcx_R(M) = \cx_R(k,M)$.
        \item 
        $\curv_R(M) = \tcurv_R(M,k) = \curv_R(M,k)$ and $\injcurv_R(M) = \curv_R(k,M)$.
        \item 
        \begin{enumerate}
            \item
            $\cx_R(M) = 0$ $\Longleftrightarrow$ $\curv_R(M) = 0$ $\Longleftrightarrow$ $\pd_R(M) < \infty$.
            \item
            $\injcx_R(M) = 0$ $\Longleftrightarrow$ $\injcurv_R(M) = 0$ $\Longleftrightarrow$ $\id_R(M) < \infty$.
        \end{enumerate}
        \item {\rm \cite[Rmk.~2.3 and 3.8.(3)]{DGS24}}
        \begin{enumerate}
            \item $\cx_R(M,N) < \infty$ $\Longrightarrow$ $\curv_R(M,N) \le 1$ $\Longleftrightarrow$ $\curv_R(M,N) = 0$ or $1$.
            \item $\tcx_R(M,N) < \infty$ $\Longrightarrow$ $\tcurv_R(M,N) \le 1$ $\Longleftrightarrow$ $\tcurv_R(M,N) = 0$ or $1$.
            \item 
            $\cx_R(M,N) = 0$ $\Longleftrightarrow$ $\curv_R(M,N) = 0$ $\Longleftrightarrow$ $\Ext_R^{\gg 0}(M,N) = 0$.
            \item 
            $\tcx_R(M,N) = 0$ $\Longleftrightarrow$ $\tcurv_R(M,N) = 0$ $\Longleftrightarrow$ $\Tor^R_{\gg 0}(M,N) = 0$.
        \end{enumerate}
    \end{enumerate}
\end{remark}

Among all the $R$-modules, the residue field of $R$ has maximal (injective) complexity and curvature. These also characterize complete intersection local rings.

\begin{proposition}\label{prop:properties-cx-curv}
    \begin{enumerate}[\rm (1)]
        \item 
        {\rm \cite[Prop.~2]{Avr96}} For any $R$-module $M$, the following hold.
    \begin{enumerate}[\rm (i)]
        \item $\cx_R(M)\le \cx_R(k)$ and $\injcx_R(M) \le \injcx_R(k) = \cx_R(k)$.
        \item $\curv_R(M) \le \curv_R(k)$ and $\injcurv_R(M) \le \injcurv_R(k) =\curv_R(k)$.
    \end{enumerate}
        \item {\rm \cite[Thm.~3]{Avr96}} The following conditions are equivalent:
    \begin{enumerate}[\rm (i)]
        \item $R$ is complete intersection $($resp., of codimension $c$$)$.
        \item $\cx_R(k)<\infty$ $($resp., $\cx_R(k) = c < \infty$$)$.
        \item $\curv_R(k) \le 1$.
    \end{enumerate}
  \end{enumerate}
\end{proposition}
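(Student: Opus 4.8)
The plan is to prove the two parts independently, deriving part~(1) from an elementary domination of Betti and Bass numbers by those of $k$, and part~(2) from Tate's explicit resolution of $k$ over a complete intersection together with the structural theorems of Gulliksen and Avramov collected in \cite{Avr96}.

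For part~(1), I would show by induction on $\dim M$ that there are constants $a_j\ge 0$ (only finitely many nonzero, with $j$ bounded in terms of $\dim M$) satisfying
\[
\beta^R_n(M)\le a_0\,\beta^R_n(k)\quad\text{and}\quad \mu^n_R(M)\le\textstyle\sum_j a_j\,\beta^R_{n-j}(k)\qquad\text{for }n\gg 0.
\]
When $\dim M=0$, a composition series of $M$ gives both, using $\mu^n_R(k)=\dim_k\Ext^n_R(k,k)=\beta^R_n(k)$. When $\dim M\ge 1$, first split off the finite-length submodule $H^0_\m(M)$ (covered by the previous case), and on the positive-depth module $M':=M/H^0_\m(M)$ choose an $M'$-regular element $x\in\m$. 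Since $x$ annihilates $\Tor^R_\bullet(-,k)$ and $\Ext^\bullet_R(k,-)$, the long exact sequences attached to $0\to M'\xrightarrow{x}M'\to M'/xM'\to 0$ break into short exact sequences, yielding $\beta^R_n(M')\le\beta^R_n(M'/xM')$ and $\mu^n_R(M')\le\mu^{n-1}_R(M'/xM')$ with $\dim(M'/xM')<\dim M'$, so the inductive hypothesis applies. Granting the display, \Cref{lem:DGS-2.7-2.2} (applied to finite sums of shifted sequences), the shift-invariance of complexity, and the identity $\limsup_n\sqrt[n]{\beta^R_{n-j}(k)}=\curv_R(k)$ for each fixed $j$ give at once $\cx_R(M)\le\cx_R(k)$, $\injcx_R(M)\le\cx_R(k)$, $\curv_R(M)\le\curv_R(k)$, and $\injcurv_R(M)\le\curv_R(k)$; taking $M=k$ in the observation $\mu^n_R(k)=\beta^R_n(k)$ gives $\injcx_R(k)=\cx_R(k)$ and $\injcurv_R(k)=\curv_R(k)$.

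For part~(2), the implications $(\mathrm{i})\Rightarrow(\mathrm{ii})$ and $(\mathrm{i})\Rightarrow(\mathrm{iii})$ are the accessible ones. As completion preserves $\beta^R_n(k)$, assume $\widehat R=Q/(\mathbf f)$ with $(Q,\fn)$ regular of dimension $e=\embdim R$ and $\mathbf f$ a $Q$-regular sequence of length $c$ in $\fn^2$. Tate's acyclic closure of $k$ over $\widehat R$ --- the Koszul complex on a regular system of parameters of $Q$ with $c$ divided-power variables of homological degree $2$ adjoined to kill the cycles of the $f_i$ --- is the minimal free resolution of $k$, so $\sum_n\beta^R_n(k)t^n=(1+t)^e/(1-t^2)^c$, whose coefficients grow polynomially in $n$ of degree exactly $c-1$ (and vanish for $n\gg 0$ when $c=0$). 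Hence $\cx_R(k)=c<\infty$ and $\curv_R(k)=\limsup_n\sqrt[n]{\beta^R_n(k)}=1$ (or $0$ when $c=0$), which is $(\mathrm{ii})$ and $(\mathrm{iii})$. For the converses I would invoke Avramov's theory of the homotopy Lie algebra $\pi^\bullet(R)$ from \cite{Avr96}: $R$ is complete intersection precisely when the deviations $\varepsilon_i(R)$ vanish for all $i\ge 3$, and if some $\varepsilon_i(R)\ne 0$ with $i\ge 3$ then $\Ext^\bullet_R(k,k)\cong U(\pi^{\ge 1}(R))$ grows exponentially by the ``no intermediate growth'' dichotomy for graded Lie algebras. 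Thus polynomial growth of $\{\beta^R_n(k)\}$ forces $R$ complete intersection (and the Poincaré-series computation then identifies the codimension with $\cx_R(k)$), which is $(\mathrm{ii})\Rightarrow(\mathrm{i})$, and $\curv_R(k)\le 1$ rules out exponential growth and hence also forces $R$ complete intersection, which is $(\mathrm{iii})\Rightarrow(\mathrm{i})$. Here $(\mathrm{ii})\Rightarrow(\mathrm{iii})$ is already immediate from \Cref{rmk:cx-curv}, so beyond $(\mathrm{i})\Rightarrow(\mathrm{ii})$ one needs only $(\mathrm{iii})\Rightarrow(\mathrm{i})$.

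The main obstacle is exactly these two converse implications in part~(2): they are Gulliksen's homological characterization of complete intersections and Avramov's gap theorem for the Betti numbers of the residue field, which are deep and which I would quote from \cite{Avr96} rather than reprove. Everything else --- all of part~(1) and the direction $(\mathrm{i})\Rightarrow(\mathrm{ii}),(\mathrm{iii})$ --- is elementary once Tate's resolution of $k$ over a complete intersection is available.
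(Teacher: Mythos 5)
This proposition is stated in the paper purely as a recollection of Avramov's results: the paper gives no proof at all, simply citing \cite[Prop.~2]{Avr96} for part (1) and \cite[Thm.~3]{Avr96} for part (2). Your proposal therefore does strictly more than the paper, and what you do is correct. For part (1), the induction on $\dim M$ --- base case by a composition series (using $\mu_R^n(k)=\beta_n^R(k)$), inductive step by splitting off $H^0_{\mathfrak m}(M)$ and then factoring out an $M'$-regular element $x\in\mathfrak m$, which kills the connecting multiplication maps on $\Tor_\bullet^R(-,k)$ and $\Ext_R^\bullet(k,-)$ and so yields $\beta_n^R(M')\le\beta_n^R(M'/xM')$ and $\mu_R^n(M')\le\mu_R^{n-1}(M'/xM')$ --- is the standard argument and gives the asserted domination $\beta_n^R(M)\le a_0\beta_n^R(k)$, $\mu_R^n(M)\le\sum_j a_j\beta_{n-j}^R(k)$; passing to complexity and curvature only needs monotonicity under termwise domination and invariance under shifts and scaling, all immediate from \Cref{cx-sequence}. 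For part (2), the computation $P^R_k(t)=(1+t)^{e}/(1-t^2)^{c}$ from Tate's resolution correctly yields (i)$\Rightarrow$(ii) with $\cx_R(k)=c$ and (i)$\Rightarrow$(iii), and you rightly identify (ii)$\Rightarrow$(i) and (iii)$\Rightarrow$(i) as Gulliksen's characterization and Avramov's no-intermediate-growth theorem, which are genuinely deep and must be quoted --- exactly the content of \cite[Thm.~3]{Avr96} that the paper itself imports wholesale. In short: your route buys a self-contained verification of everything except the two hard converses, at the cost of length; the paper's route is a bare citation. No gaps.
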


The (injective) complexity and curvature are preserved under taking quotient by a regular element of both the ring and the module.

\begin{lemma}\label{lem:cx-curv-mod-x}
Let $M$ be an $R$-module. Let $x$ be an element of $R$ which is regular on both $R$ and $M$. Then 
    \begin{enumerate}[\rm (1)]
        \item 
        $\cx_R(M) = \cx_{R/xR}(M/xM)$ and $\curv_R(M) = \curv_{R/xR}(M/xM)$. 
        \item 
        $\injcx_R(M) = \injcx_{R/xR}(M/xM)$ and $\injcurv_R(M) = \injcurv_{R/xR}(M/xM)$. 
    \end{enumerate}
\end{lemma}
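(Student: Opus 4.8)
The plan is to reduce everything to an elementary observation: the complexity and the curvature of a sequence of non-negative reals are unchanged when the index is shifted by a constant, i.e. $\cx\{a_n\} = \cx\{a_{n+1}\}$ and $\curv\{a_n\} = \curv\{a_{n+1}\}$. Granting this, it suffices to compare the Betti numbers $\beta_n^R(M)$ with $\beta_n^{\oR}(\oM)$ and the Bass numbers $\mu_R^n(M)$ with $\mu_{\oR}^n(\oM)$, where $\oR = R/xR$ and $\oM = M/xM$. Here we may assume $x \in \m$, since otherwise $\oR = 0$ and the statement is vacuous in the intended setting.

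For part (1): let $(F_\bullet, \partial_\bullet)$ be a minimal free resolution of $M$ over $R$. Because $x$ is a non-zerodivisor on $R$, the complex $0 \to R \xrightarrow{x} R \to \oR \to 0$ is a free resolution of $\oR$; tensoring it with $M$ gives $0 \to M \xrightarrow{x} M \to \oM \to 0$, which is exact since $x$ is a non-zerodivisor on $M$. Hence $\Tor_i^R(M,\oR) = 0$ for all $i > 0$, so $F_\bullet \otimes_R \oR$ is a free resolution of $\oM$ over $\oR$. It is again minimal, because the entries of the matrices of $\partial_\bullet$ lie in $\m$, whose image in $\oR$ is the maximal ideal of $\oR$ (this is the one place where $x \in \m$ is used). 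Therefore $\beta_n^{\oR}(\oM) = \rank_{\oR}(F_n \otimes_R \oR) = \rank_R F_n = \beta_n^R(M)$ for every $n \ge 0$, and taking $\cx$ and $\curv$ of these equal sequences yields $\cx_R(M) = \cx_{\oR}(\oM)$ and $\curv_R(M) = \curv_{\oR}(\oM)$.

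For part (2): I would use the change-of-rings spectral sequence $E_2^{p,q} = \Ext_{\oR}^p(N, \Ext_R^q(\oR, M)) \Rightarrow \Ext_R^{p+q}(N, M)$, valid for every $\oR$-module $N$ (the Grothendieck spectral sequence of $\Hom_{\oR}(N,-)\circ\Hom_R(\oR,-)$, using that $\Hom_R(\oR,-)$ sends injective $R$-modules to injective $\oR$-modules). Applying $\Hom_R(-,M)$ to $0 \to R \xrightarrow{x} R \to \oR \to 0$ and using that $x$ is a non-zerodivisor on $M$, one finds $\Ext_R^0(\oR,M) = (0:_M x) = 0$, $\Ext_R^1(\oR,M) \cong \oM$, and $\Ext_R^q(\oR,M) = 0$ for $q \ge 2$. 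Thus the spectral sequence is concentrated in the row $q = 1$ and collapses, giving $\Ext_{\oR}^p(N,\oM) \cong \Ext_R^{p+1}(N,M)$ for all $p \ge 0$ (a standard form of Rees's theorem). Specializing to $N = k$, which is an $\oR$-module since $x \in \m$, we obtain $\mu_{\oR}^p(\oM) = \mu_R^{p+1}(M)$ for all $p \ge 0$. By the index-shift invariance of $\cx$ and $\curv$ recorded above, this gives $\injcx_R(M) = \injcx_{\oR}(\oM)$ and $\injcurv_R(M) = \injcurv_{\oR}(\oM)$.

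I do not expect a genuine obstacle. The routine points needing care are: verifying that $F_\bullet \otimes_R \oR$ remains minimal in (1) (immediate once $x \in \m$ is noted); quoting the correct change-of-rings isomorphism in (2), for which the collapsing spectral sequence above is the cleanest self-contained route; and checking the elementary index-shift invariance of complexity and curvature — for curvature one uses that $a_{n+1}^{1/n}$ and $a_{n+1}^{1/(n+1)}$ have the same $\limsup$, since their exponents differ by a factor tending to $1$ while Betti and Bass numbers grow at most exponentially.
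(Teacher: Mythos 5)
Your proof is correct and follows essentially the same route as the paper, which simply cites Matsumura (p.~140, Lem.~2) for the isomorphisms $\Ext_R^n(M,k)\cong\Ext_{R/xR}^n(M/xM,k)$ and $\Ext_R^{n+1}(k,M)\cong\Ext_{R/xR}^n(k,M/xM)$ and reads off the same equalities of Betti numbers and (shifted) Bass numbers. You merely re-derive those two standard change-of-rings facts from scratch (via the minimal free resolution and Rees's theorem), and your care about the index shift for curvature is the same implicit step the paper takes.
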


\begin{proof}
    In view of \cite[p.~140, Lem.~2]{Mat89}, $\Ext_R^{n}(M,k) \cong \Ext_{R/xR}^n(M/xM,k)$ and $\Ext_R^{n+1}(k,M) \cong \Ext_{R/xR}^n(k,M/xM)$ for all $n\ge 0$. So $\beta_n^R(M)=\beta_n^{R/xR}(M/xM)$ and $\mu^{n+1}_R(M)=\mu^n_{R/xR}(M/xM)$ for all $n \ge 0$. Hence the desired equalities follow.
    %
\end{proof}


Under certain conditions, (Ext and Tor) complexity of a pair of modules can be bounded above by complexity of each of the modules.

\begin{proposition}\label{prop:AB00-II-DGS24-7.9}
    Let $R$ be complete intersection, and $M$, $N$ be two $R$-modules. Then the following hold.
    \begin{enumerate}[\rm (1)]
        \item 
        $\cx_R(M)=\injcx_R(M)$, and $\cx_R(M, N) \le \min\{\cx_R(M), \cx_R(N)\} < \infty$ and $\curv_R(M,N) \le 1$.
        \item 
        Suppose that $\m^h\Tor_{\gg 0}^R(M,N)=0$ for some $h \ge 0$. Then $\tcx_R(M, N) \le \min\{\cx_R(M), \cx_R(N)\} < \infty$ and $\tcurv_R(M,N) \le 1$.
    \end{enumerate}
\end{proposition}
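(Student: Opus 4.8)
I would deduce both parts from the structure theory of (co)homology over complete intersections: over a complete intersection the total $\Ext$- and $\Tor$-modules of a pair $(M,N)$ are finitely generated graded modules over a polynomial ring in the cohomology (resp.\ homology) operators, so their growth is polynomial of degree at most $\codim R-1$, and the precise complexity is the Krull dimension of an associated support variety, which for a pair equals the intersection of the support varieties of the two modules. These facts are exactly what Avramov--Buchweitz record in \cite{AB00} (together with Gulliksen's finite-generation theorem for $\Tor$), and that is where I would cite rather than argue from scratch.

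\textbf{Part (1).} First I would replace $R$ by its completion: $\m$-adic completion leaves $\beta^R_n(M)$, $\mu^n_R(M)$, $\mu_R(\Ext^n_R(M,N))$ and $\mu_R(\Tor^R_n(M,N))$ unchanged, so we may assume $R=Q/(\mathbf f)$ with $(Q,\mathfrak n)$ regular local and $\mathbf f$ a $Q$-regular sequence of length $c=\codim R$. By \cite{AB00}, $\Ext^{*}_R(M,N)=\bigoplus_{n\ge 0}\Ext^n_R(M,N)$ is finitely generated over the ring of cohomology operators, so after reduction modulo $\m$ it is a finitely generated graded module over $\mathcal{S}=k[\chi_1,\dots,\chi_c]$ with $\deg\chi_i=2$; since $\mu_R(\Ext^n_R(M,N))=\dim_k(\Ext^n_R(M,N)\otimes_R k)$, the sequence $\{\mu_R(\Ext^n_R(M,N))\}_n$ is precisely its Hilbert function, hence polynomially bounded, and $\cx_R(M,N)$ equals the Krull dimension of its support $V^*_R(M,N):=\Supp_{\mathcal{S}}(\Ext^*_R(M,N)\otimes_R k)$. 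By \cite{AB00}, $V^*_R(M,N)=V^*_R(M)\cap V^*_R(N)$, where $V^*_R(M):=V^*_R(M,M)$. Taking $N=k$ (and using $\Ext^n_R(M,k)\otimes_R k=\Ext^n_R(M,k)$) gives $\cx_R(M)=\cx_R(M,k)=\dim V^*_R(M,k)=\dim(V^*_R(M)\cap V^*_R(k))=\dim V^*_R(M)$, the last step because $\cx_R(k)=c$ forces $V^*_R(k)=\mathbb{A}^c_k$; likewise $\injcx_R(M)=\cx_R(k,M)=\dim(V^*_R(k)\cap V^*_R(M))=\dim V^*_R(M)$. Hence $\cx_R(M)=\injcx_R(M)$, and $\cx_R(M,N)=\dim(V^*_R(M)\cap V^*_R(N))\le\min\{\dim V^*_R(M),\dim V^*_R(N)\}=\min\{\cx_R(M),\cx_R(N)\}$, which is finite since $\cx_R(M)\le\cx_R(k)=c$ by \Cref{prop:properties-cx-curv}. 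Finally $\curv_R(M,N)\le 1$ by \Cref{rmk:cx-curv}(3)(a).

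\textbf{Part (2).} For $\Tor$ I would run the same argument with homology operators. By a theorem of Gulliksen (see also \cite{AB00}), $\Tor^R_{*}(M,N)=\bigoplus_{n\ge 0}\Tor^R_n(M,N)$ is finitely generated over the polynomial ring on the homology operators, so $\Tor^R_*(M,N)\otimes_R k$ is a finitely generated graded module over $k[\chi_1,\dots,\chi_c]$ (with $\deg\chi_i=-2$) whose Hilbert function is $\{\mu_R(\Tor^R_n(M,N))\}_n$; hence $\tcx_R(M,N)<\infty$, and $\tcurv_R(M,N)\le 1$ by \Cref{rmk:cx-curv}(3)(b). The hypothesis $\m^h\Tor^R_{\gg 0}(M,N)=0$ is what lets one pass freely between the $\mu$- and $\lambda$-versions of Tor-complexity, via \cite[Lem.~3.4]{DGS24}, if one prefers to argue with lengths. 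For the sharper bound, $\Tor^R_*(M,N)$ is a graded module over each of the cohomology rings $\Ext^*_R(M,M)$ and $\Ext^*_R(N,N)$ via cap products, so after reduction modulo $\m$ its $\mathcal{S}$-support lies in $V^*_R(M)\cap V^*_R(N)$, whence $\tcx_R(M,N)=\dim\Supp_{\mathcal{S}}(\Tor^R_*(M,N)\otimes_R k)\le\dim(V^*_R(M)\cap V^*_R(N))\le\min\{\cx_R(M),\cx_R(N)\}$.

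\textbf{Main obstacle.} The substantive input is all external: finite generation of the total $\Ext$ and $\Tor$ over the operator polynomial rings, the identification of complexity with the Krull dimension of the support variety, and — crucially — the intersection formula $V^*_R(M,N)=V^*_R(M)\cap V^*_R(N)$, which is what makes both the equality $\cx_R(M)=\injcx_R(M)$ and the two $\min$-bounds fall out. I would cite \cite{AB00} (and Gulliksen) for these rather than reprove them. The only genuinely self-contained bookkeeping is the completion reduction, the observation that $V^*_R(k)$ is the whole affine space because $\cx_R(k)=c$, and — the fiddliest point — transporting the intersection bound to $\Tor$ using the $\Ext^*_R(M,M)$- and $\Ext^*_R(N,N)$-module structure of $\Tor^R_*(M,N)$, which is standard but requires a little care with the gradings of the operators.
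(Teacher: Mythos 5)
Your part (1) is correct, but it is essentially a re-derivation of the result the paper simply cites: the paper's proof of (1) consists of quoting \cite[Thm.~II]{AB00} for both $\cx_R(M)=\injcx_R(M)$ and $\cx_R(M,N)\le\min\{\cx_R(M),\cx_R(N)\}$, then invoking \Cref{prop:properties-cx-curv}.(1) for finiteness and \Cref{rmk:cx-curv}.(3) for the curvature bound. Your unpacking via the intersection formula $V^*_R(M,N)=V^*_R(M)\cap V^*_R(N)$ is exactly the content of the cited theorem, so the two routes coincide in substance.

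Part (2), however, contains a genuine error in the sketch. A graded module concentrated in degrees $n\ge 0$ that is finitely generated over $k[\chi_1,\dots,\chi_c]$ with $\deg\chi_i=-2$ must vanish in all degrees above the maximum degree of a finite generating set, because the operators only lower degree; so your assertion that $\Tor^R_*(M,N)\otimes_R k$ is a finitely generated graded module over the degree-$(-2)$ operator ring would force $\Tor^R_{\gg 0}(M,N)=0$ rather than polynomial growth. Gulliksen's finite-generation theorem is a statement about $\Ext$ (degree $+2$ operators); to transport it to $\Tor$ one must pass to a graded dual, e.g.\ use the hypothesis $\m^h\Tor^R_{\gg 0}(M,N)=0$ to reduce to finite-length Tor modules and Matlis-dualize them into Ext modules, or argue by induction on the codimension using the degree-two change-of-rings exact sequence. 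This is exactly why the annihilation hypothesis appears in the statement: it is not merely ``bookkeeping between $\mu$ and $\lambda$'' as you suggest, but the input that makes the reduction to the Noetherian Ext side possible, and it is an explicit hypothesis in both \cite[Cor.~5.7]{Dao07} and \cite[Prop.~7.9.(1)]{DGS24} --- which is all the paper cites for part (2) before concluding with \Cref{rmk:cx-curv}.(3). If you replace your Gulliksen paragraph by a citation of those two results, your proof closes and agrees with the paper's; as written, the argument for (2) does not go through.
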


\begin{proof}
    (1) It is shown in \cite[Thm.~II]{AB00} that $\cx_R(M)=\injcx_R(M)$ and $\cx_R(M, N) \le \min\{\cx_R(M), \cx_R(N)\}$. By \Cref{prop:properties-cx-curv}.(1), $\min\{\cx_R(M), \cx_R(N)\}\le \cx_R(k) < \infty$. Thus $\cx_R(M, N) < \infty$. Hence, in view of \Cref{rmk:cx-curv}.(4), $\curv_R(M,N) \le 1$.

    (2) The inequalities for (Tor) complexities are shown in \cite[Cor.~5.7]{Dao07} and \cite[Prop.~7.9.(1)]{DGS24}. Hence, by \Cref{rmk:cx-curv}.(4), $\tcurv_R(M,N) \le 1$.
\end{proof}

Both complexity and curvature behave well with finite direct sum of modules.

\begin{lemma}\label{lem:cx-curv-direct-sum}
    Let $M$, $N$, $M_i$ and $N_i$ be $R$-modules for $i=1,2$. Then 
    \begin{enumerate}[\rm (1)]
            \item 
            $\tcx_R(M_1 \oplus M_2, N) = \sup\{\tcx_R(M_1,N), \tcx_R(M_2,N)\}$.  
            \item 
            $\tcurv_R(M_1 \oplus M_2, N) = \sup\{\tcurv_R(M_1,N), \tcurv_R(M_2,N)\}$.
            \item 
            $\cx_R(M_1 \oplus M_2, N) = \sup\{\cx_R(M_1,N), \cx_R(M_2,N)\}$.
            \item 
            $\curv_R(M_1 \oplus M_2, N) = \sup\{\curv_R(M_1,N), \curv_R(M_2,N)\}$.
            \item 
            $\cx_R(M,N_1 \oplus N_2) = \sup\{\cx_R(M,N_1), \cx_R(M,N_2)\}$.
            \item 
            $\curv_R(M,N_1 \oplus N_2) = \sup\{\curv_R(M,N_1), \curv_R(M,N_2)\}$.
        \end{enumerate}
\end{lemma}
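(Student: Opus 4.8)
The statement to prove is \Cref{lem:cx-curv-direct-sum}: the six formulas expressing that (Ext- and Tor-) complexity and curvature of a pair distribute over finite direct sums in each variable. The natural strategy is to reduce everything to the corresponding formula for the sequence of minimal numbers of generators, i.e.\ to \Cref{lem:DGS-2.7-2.2}. The key point is that $\Ext$ and $\Tor$ are additive functors, so for each $n$ there are natural isomorphisms
\[
\Tor_n^R(M_1\oplus M_2,N)\cong \Tor_n^R(M_1,N)\oplus\Tor_n^R(M_2,N),
\]
\[
\Ext^n_R(M_1\oplus M_2,N)\cong \Ext^n_R(M_1,N)\oplus\Ext^n_R(M_2,N),
\]
\[
\Ext^n_R(M,N_1\oplus N_2)\cong \Ext^n_R(M,N_1)\oplus\Ext^n_R(M,N_2).
\]
Applying $\mu_R(-)$ and using that $\mu_R(A\oplus B)=\mu_R(A)+\mu_R(B)$ for finitely generated modules (both are $\dim_k$ of the corresponding module tensored with $k$), we get, for each $n$,
\[
\mu_R(\Tor_n^R(M_1\oplus M_2,N))=\mu_R(\Tor_n^R(M_1,N))+\mu_R(\Tor_n^R(M_2,N)),
\]
and likewise in the other two cases.

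**Carrying it out.** For part (1), set $x_n:=\mu_R(\Tor_n^R(M_1,N))$ and $y_n:=\mu_R(\Tor_n^R(M_2,N))$. By the displayed identity, $\mu_R(\Tor_n^R(M_1\oplus M_2,N))=x_n+y_n$, so by definition $\tcx_R(M_1\oplus M_2,N)=\cx\{x_n+y_n\}$, which equals $\sup\{\cx\{x_n\},\cx\{y_n\}\}=\sup\{\tcx_R(M_1,N),\tcx_R(M_2,N)\}$ by \Cref{lem:DGS-2.7-2.2}(1). Part (2) is identical, using \Cref{lem:DGS-2.7-2.2}(2) and the definition of $\tcurv$. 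Parts (3) and (5) repeat the argument with $\Ext^n_R(-,N)$ (additivity in the first variable) and $\Ext^n_R(M,-)$ (additivity in the second variable) respectively, invoking \Cref{lem:DGS-2.7-2.2}(1); parts (4) and (6) do the same with \Cref{lem:DGS-2.7-2.2}(2). One should note that all modules involved are finitely generated over the Noetherian ring $R$, so each $\Ext^n$ and $\Tor_n$ is finitely generated and the quantities $\mu_R(-)$ are finite, which is exactly the hypothesis needed to apply \Cref{lem:DGS-2.7-2.2}.

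**The main (minor) obstacle.** There is essentially no obstacle here — the lemma is bookkeeping. The only point requiring a word of care is the elementary fact that $\mu_R(A\oplus B)=\mu_R(A)+\mu_R(B)$ over a local ring, which follows from Nakayama since $\mu_R(M)=\dim_k(M\otimes_R k)$ and $(A\oplus B)\otimes_R k\cong (A\otimes_R k)\oplus(B\otimes_R k)$. Given this, each of the six statements is a one-line deduction from additivity of $\Ext$/$\Tor$ together with the corresponding clause of \Cref{lem:DGS-2.7-2.2}. Accordingly I would present the proof compactly: prove the additivity-of-$\mu$ observation once, then treat (1) in full and remark that (2)–(6) follow mutatis mutandis by replacing $\Tor$ by the appropriate $\Ext$ and $\cx\{-\}$ by $\curv\{-\}$.
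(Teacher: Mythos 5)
Your proposal is correct and follows exactly the paper's own argument: additivity of $\Tor$ and $\Ext$ in each variable, the identity $\mu_R(A\oplus B)=\mu_R(A)+\mu_R(B)$, and then \Cref{lem:DGS-2.7-2.2} applied to the resulting sequences. No differences worth noting.
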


\begin{proof}
    Note that for any two $R$-modules $L_1$ and $L_2$, one has that $\mu_R(L_1\oplus L_2)= \mu_R(L_1)+\mu_R(L_2)$. Hence, since $\Tor^R_n(M_1 \oplus M_2, N)\cong \Tor^R_n(M_1,N) \oplus \Tor^R_n(M_2,N)$ for all $n$, the proofs of (1) and (2) are direct consequences of \Cref{lem:DGS-2.7-2.2}. Similarly, considering Ext modules, one obtains the other equalities.
\end{proof}

\begin{para}\label{para:fact-on-exact-seq}
    Let $X\to Y\to Z$ be an exact sequence of $R$-modules. Then, $(\ann_RX)(\ann_RZ)\subseteq \ann_R Y$. Moreover, if all $X$, $Y$ and $Z$ have finite lengths, then $\lambda_R(Y)\le \lambda_R(X)+\lambda_R(Z)$.
\end{para}

\begin{lemma}\label{lem:cx-curv-s.e.s}
Let $L,M_1,M_2,M_3$ and $N$ be $R$-modules such that there is a short exact sequence $0\to M_1\to M_2 \to M_3 \to 0$. Let $s,t,u \in \{1,2,3\}$ be distinct $($e.g., $s=2$, $t=1$ and $u=3$$)$.
\begin{enumerate}[\rm (1)]
    \item
    Suppose that $\m^h\Tor^R_{\gg 0}(M_s,N)=0$ and $\m^h\Tor^R_{\gg 0}(M_t,N)=0$ for some $h>0$. Then, $\m^{2h}\Tor^R_{\gg 0}(M_u,N)=0$. Moreover, $\tcx_R(M_u,N)\le \sup\{\tcx_R(M_s,N), \tcx_R(M_t,N)\}$ and $\tcurv_R(M_u,N)\le \sup\{\tcurv_R(M_s,N), \tcurv_R(M_t,N)\}$.
    \item
    Suppose that $\m^h\Ext^{\gg 0}_R(M_s,N)=0$ and $\m^h\Ext^{\gg 0}_R(M_t,N)=0$ for some $h>0$. Then, $\m^{2h}\Ext^{\gg 0}_R(M_u,N)=0$. Moreover, $\cx_R(M_u,N)\le \sup\{\cx_R(M_s,N), \cx_R(M_t,N)\}$ and $\curv_R(M_u,N)\le \sup\{\curv_R(M_s,N), \curv_R(M_t,N)\}$.
    \item
    Suppose that $\m^h\Ext^{\gg 0}_R(L,M_s)=0$ and $\m^h\Ext^{\gg 0}_R(L,M_t)=0$ for some $h>0$. Then, $\m^{2h}\Ext^{\gg 0}_R(L,M_u)=0$. Moreover, $\cx_R(L,M_u)\le \sup\{\cx_R(L,M_s), \cx_R(L,M_t)\}$ and $\curv_R(L,M_u)\le \sup\{\curv_R(L,M_s), \curv_R(L,M_t)\}$. 
\end{enumerate}
\end{lemma}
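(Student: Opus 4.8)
The plan is to treat all three parts uniformly by exploiting the long exact sequences in Tor and Ext attached to the short exact sequence $0\to M_1\to M_2\to M_3\to 0$, together with the two elementary facts collected in \ref{para:fact-on-exact-seq} and the subadditivity of $\cx\{-\}$ and $\curv\{-\}$ from \Cref{lem:DGS-2.7-2.2}. For part (1), apply $-\otimes_R N$ to get, for each $n$, an exact sequence $\Tor^R_n(M_s,N)\to\Tor^R_n(M_u,N)\to\Tor^R_{n-1}(M_t,N)$ (the precise shift and which pair flanks $\Tor(M_u,N)$ depends on which of $s,t,u$ equals $1,2,3$, but in every case $\Tor(M_u,N)$ sits between a $\Tor$ of $M_s$ and a $\Tor$ of $M_t$, possibly with a degree shift of $1$). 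By \ref{para:fact-on-exact-seq}, $(\ann_R\Tor^R_n(M_s,N))(\ann_R\Tor^R_{n-1}(M_t,N))\subseteq\ann_R\Tor^R_n(M_u,N)$; since $\m^h$ annihilates both outer terms for $n\gg 0$, the product $\m^{2h}$ annihilates $\Tor^R_n(M_u,N)$ for $n\gg 0$. In particular these Tor modules have finite length, so $\mu_R(\Tor^R_n(M_u,N))\le\lambda_R(\Tor^R_n(M_u,N))\le\lambda_R(\Tor^R_n(M_s,N))+\lambda_R(\Tor^R_{n-1}(M_t,N))$ by the length inequality in \ref{para:fact-on-exact-seq}.

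The one point that needs a small remark is that I want a bound on $\mu_R$, not on $\lambda_R$: but since $\m^{2h}$ annihilates $\Tor^R_n(M_u,N)$ for $n\gg 0$, and similarly (from the hypotheses) $\m^h$ annihilates $\Tor^R_n(M_s,N)$ and $\Tor^R_{n-1}(M_t,N)$ for $n\gg 0$, all three sequences $\{\mu_R(\Tor^R_n(M_\bullet,N))\}$ and $\{\lambda_R(\Tor^R_n(M_\bullet,N))\}$ have the same complexity and curvature (a module of finite length killed by a fixed power of $\m$ has $\lambda_R\le C\cdot\mu_R$ for a constant $C$ depending only on that power and $\dim_k(\m/\m^2)$). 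Hence from the displayed length inequality, $\cx$ and $\curv$ of $\{\mu_R(\Tor^R_n(M_u,N))\}$ are bounded above by those of $\{\mu_R(\Tor^R_n(M_s,N))+\mu_R(\Tor^R_{n-1}(M_t,N))\}$, and \Cref{lem:DGS-2.7-2.2} (noting a degree shift by $1$ changes neither $\cx$ nor $\curv$) gives $\tcx_R(M_u,N)\le\sup\{\tcx_R(M_s,N),\tcx_R(M_t,N)\}$ and likewise for curvature.

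Parts (2) and (3) are the same argument with $\Tor$ replaced by $\Ext$: for (2) apply $\Hom_R(-,N)$ to $0\to M_1\to M_2\to M_3\to 0$, and for (3) apply $\Hom_R(L,-)$; in both cases the long exact sequence places $\Ext^n_R(M_u,N)$ (resp.\ $\Ext^n_R(L,M_u)$) between an $\Ext$ term of index $n$ or $n\pm 1$ coming from $M_s$ and one coming from $M_t$, and the identical annihilator-product and length estimates apply. The main (and really only) obstacle is purely bookkeeping: keeping straight, for each choice of the distinct triple $(s,t,u)$, exactly which long-exact-sequence segment exhibits the $M_u$-term flanked by the $M_s$- and $M_t$-terms and with what degree shift — but since $\cx\{-\}$ and $\curv\{-\}$ are insensitive to a finite index shift and to the order of the two flanking terms, this case analysis collapses and no genuine difficulty remains.
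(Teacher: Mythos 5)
Your proposal is correct and follows essentially the same route as the paper: the long exact sequence in Tor (resp.\ Ext) combined with the annihilator and length facts of \ref{para:fact-on-exact-seq}, the equivalence of $\mu$- and $\lambda$-based complexity/curvature for modules annihilated by a fixed power of $\m$ (this is exactly the cited [DGS24, Lem.~3.4]), and the subadditivity in \Cref{lem:DGS-2.7-2.2}. Your explicit handling of the degree shift and the $\mu$-versus-$\lambda$ comparison merely spells out details the paper leaves implicit.
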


\begin{proof}
    (1) Consider the corresponding long exact sequence of Tor modules. Hence, the first claim follows from \ref{para:fact-on-exact-seq}, while the desired inequalities can be obtained from \ref{para:fact-on-exact-seq}, \cite[Lem.~3.4]{DGS24} and \Cref{lem:DGS-2.7-2.2} by computing the lengths of the Tor modules.

    Similarly, considering the long exact sequences of Ext modules, we obtain (2) and (3).
\end{proof}

\begin{lemma}\label{newloc}
    An $R$-module $M$ locally has finite projective dimension on punctured spectrum of $R$ if and only if the completion $\widehat M$ locally has finite projective dimension on punctured spectrum of $\widehat R$.
\end{lemma}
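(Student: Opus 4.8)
The plan is to transfer finiteness of projective dimension along the flat local homomorphisms obtained by localizing the completion map $R\to\widehat R$. Since $M$ is finitely generated, $\widehat M\cong M\otimes_R\widehat R$, and $R\to\widehat R$ is faithfully flat with $\widehat R/\m\widehat R\cong k$, so that $\widehat\m=\m\widehat R$; hence $\widehat\m$ is the only prime of $\widehat R$ lying over $\m$, while the induced map $\Spec\widehat R\to\Spec R$ is surjective. Consequently, for $\mathfrak q\in\Spec\widehat R$ with contraction $\mathfrak p:=\mathfrak q\cap R$ one has $\mathfrak p=\m$ if and only if $\mathfrak q=\widehat\m$; moreover every $\mathfrak p\in\Spec R\setminus\{\m\}$ is the contraction of some $\mathfrak q\in\Spec\widehat R\setminus\{\widehat\m\}$. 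For such a pair, the map $R_\mathfrak p\to\widehat R_\mathfrak q$ is a flat local homomorphism — flat because $\widehat R_\mathfrak q$ is a localization of the flat $R_\mathfrak p$-algebra $\widehat R\otimes_R R_\mathfrak p$, and local because $\mathfrak q\widehat R_\mathfrak q\cap R_\mathfrak p=\mathfrak p R_\mathfrak p$ — hence faithfully flat, and one checks directly that $\widehat M_\mathfrak q\cong M_\mathfrak p\otimes_{R_\mathfrak p}\widehat R_\mathfrak q$.

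The argument then rests on two standard facts about a faithfully flat ring map $A\to B$ and a finitely generated $A$-module $L$: (i) if $\pd_A L<\infty$, then $\pd_B(L\otimes_A B)\le\pd_A L$, because a finite free $A$-resolution of $L$ base changes to a finite free $B$-resolution of $L\otimes_A B$; and (ii) if $(A,\mathfrak m_A)$ and $(B,\mathfrak m_B)$ are local and $\pd_B(L\otimes_A B)<\infty$, then $\pd_A L<\infty$, because $\Tor_i^A(L,A/\mathfrak m_A)\otimes_A B\cong\Tor_i^B(L\otimes_A B,B/\mathfrak m_A B)$ vanishes for $i\gg 0$, whence $\Tor_i^A(L,A/\mathfrak m_A)=0$ for $i\gg 0$ by faithful flatness. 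Applying (i) to $R_\mathfrak p\to\widehat R_\mathfrak q$ yields the forward implication: if $\pd_{R_\mathfrak p}M_\mathfrak p<\infty$ for all $\mathfrak p\ne\m$, then for each $\mathfrak q\ne\widehat\m$, setting $\mathfrak p=\mathfrak q\cap R\ne\m$, we get $\pd_{\widehat R_\mathfrak q}\widehat M_\mathfrak q\le\pd_{R_\mathfrak p}M_\mathfrak p<\infty$. Applying (ii) yields the converse: if $\pd_{\widehat R_\mathfrak q}\widehat M_\mathfrak q<\infty$ for all $\mathfrak q\ne\widehat\m$, then given $\mathfrak p\ne\m$ we pick $\mathfrak q\ne\widehat\m$ lying over $\mathfrak p$ and conclude $\pd_{R_\mathfrak p}M_\mathfrak p<\infty$.

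I do not expect a genuine obstacle; the proof is essentially bookkeeping. The only points needing a little care are the identification of the closed fibre (used to see that non-maximal primes of $\widehat R$ contract to non-maximal primes of $R$, and conversely, via surjectivity of $\Spec\widehat R\to\Spec R$) and the verification that $R_\mathfrak p\to\widehat R_\mathfrak q$ is again a faithfully flat local homomorphism with $\widehat M_\mathfrak q\cong M_\mathfrak p\otimes_{R_\mathfrak p}\widehat R_\mathfrak q$ — both routine.
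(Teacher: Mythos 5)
Your proof is correct, but it takes a different route from the paper. You argue prime-by-prime: using that the closed fibre of $R\to\widehat R$ is a point, you match each non-maximal $\mathfrak q\in\Spec\widehat R$ with its non-maximal contraction $\mathfrak p$ (and conversely, via surjectivity of $\Spec\widehat R\to\Spec R$), and then transfer finiteness of projective dimension along the faithfully flat local map $R_{\mathfrak p}\to\widehat R_{\mathfrak q}$ — ascent by base-changing a finite free resolution, descent by flat base change for $\Tor$ against the residue field plus faithful flatness. All the steps you flag as routine (the identification $\widehat M_{\mathfrak q}\cong M_{\mathfrak p}\otimes_{R_{\mathfrak p}}\widehat R_{\mathfrak q}$, locality of the induced map) do check out. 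The paper instead globalizes the condition first: $M$ has finite projective dimension on the punctured spectrum iff $\syz^d_R(M)$ (with $d=\dim R$) is locally free there, iff $\Ext^1_R\big(\syz^d_R(M),\syz^{d+1}_R(M)\big)$ has finite length — a single length-finiteness condition that is visibly invariant under completion, so the equivalence follows in one line once that criterion (quoted from another source) is in hand. Your argument is more self-contained and elementary, needing only standard faithfully flat descent rather than the syzygy/finite-length criterion; the paper's is shorter but leans on an external proposition and implicitly on Auslander--Buchsbaum to cap the local projective dimensions by $d$. Either proof is acceptable.
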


\begin{proof}
    Set $d:=\dim(R)$. Then, $M$ locally has finite projective dimension on punctured spectrum of $R$ if and only if the $d$th syzygy $\syz^d_R(M)$ is locally free on punctured spectrum of $R$, if and only if $\Ext^1_R\big(\syz^d_R(M), \syz^{d+1}_R(M)\big)$ has finite length (cf.~\cite[Prop.~2.10]{nfloc}), if and only if $ \Ext^1_{\widehat R}\big(\syz^d_{\widehat R}(\widehat M), \syz^{d+1}_{\widehat R}( \widehat M)\big)$ has finite length, if and only if  $\syz^d_{\widehat R}(\widehat M)$ is locally free on punctured spectrum of $\widehat R$, if and only if $\widehat M$ locally has finite projective dimension on punctured spectrum of $\widehat R$. 
\end{proof}

Next, we recall the notions of Burch (sub)modules and ideals.

\begin{definition}\label{defn:Burch}
\begin{enumerate}[(1)]
    \item \cite[Def.~3.1]{DK23} 
    A submodule $M$ of an $R$-module $L$ is said to be Burch if
    \begin{center}
        $\m(M :_L \m) \neq \m M$, i.e., $\m(M :_L \m) \nsubseteq \m M$.
    \end{center}
    An $R$-module $M$ is called Burch if it is a Burch submodule of some $R$-module.
    \item \cite[Def.~2.1]{DKT20} 
    A Burch submodule of $R$ is called a Burch ideal of $R$. 
\end{enumerate}
\end{definition}

The following property of Burch modules follows from (1)$\Leftrightarrow$(2) or (1)$\Leftrightarrow$(4) in \cite[Lem.~3.9]{DK23}.

\begin{lemma}\label{burchcomplete}
    If $M$ is a Burch $R$-module, then $\widehat M$ is a Burch $\widehat R$-module. 
\end{lemma}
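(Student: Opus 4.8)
The plan is to argue directly from \Cref{defn:Burch}, transferring the defining inequality through the $\m$-adic completion functor; alternatively, as the sentence preceding the lemma indicates, one can invoke any of the equivalent formulations of the Burch property in \cite[Lem.~3.9]{DK23} and note that the chosen one is insensitive to completion. The direct route below shows that the raw definition already has this feature.

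First I would reduce to Burch submodules. If $M$ is a Burch $R$-module, then by definition $M\cong M'$ for some Burch submodule $M'$ of a (finitely generated) $R$-module $L$, i.e.\ $\m(M':_L\m)\neq \m M'$ inside $L$. Since $\widehat{(-)}$ respects isomorphisms, $\widehat M\cong\widehat{M'}$, so it suffices to show that $\widehat{M'}$ is a Burch submodule of $\widehat L$, i.e.\ $\widehat{\m}\,(\widehat{M'}:_{\widehat L}\widehat{\m})\neq \widehat{\m}\,\widehat{M'}$, where $\widehat{\m}=\m\widehat R$; because all modules here are finitely generated (hence Noetherian), every submodule in sight is finitely generated.

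Next I would record the three commutation facts that drive the argument, each standard for finitely generated modules over a Noetherian local ring since $\widehat{(-)}=(-)\otimes_R\widehat R$ is faithfully flat and exact: (i) $\widehat{\m M'}=\widehat{\m}\,\widehat{M'}$, obtained by base-changing the surjection $\m\otimes_R M'\twoheadrightarrow \m M'$ and identifying the image; (ii) $\widehat{M':_L\m}=\widehat{M'}:_{\widehat L}\widehat{\m}$, obtained by applying $\widehat{(-)}$ to the left-exact sequence $0\to (M':_L\m)\to L\to \Hom_R(\m, L/M')$ (the second map sending $x$ to multiplication by $x$) and using that $\Hom$ commutes with flat base change when the first argument is finitely generated, together with $\widehat{L/M'}=\widehat L/\widehat{M'}$; and (iii) for finitely generated submodules $A\subseteq B$ of a finitely generated module, $A=B$ if and only if $\widehat A=\widehat B$, because $\widehat{B/A}=0$ forces $B/A=0$ by faithful flatness.

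Finally, combining (i) and (ii), the submodules $\widehat{\m}\,(\widehat{M'}:_{\widehat L}\widehat{\m})$ and $\widehat{\m}\,\widehat{M'}$ of $\widehat L$ are precisely $\widehat{\m(M':_L\m)}$ and $\widehat{\m M'}$. Since $M'\subseteq (M':_L\m)$ gives $\m M'\subseteq \m(M':_L\m)$ always, fact (iii) shows that these two completions coincide exactly when $\m M'=\m(M':_L\m)$ in $L$, which fails by hypothesis. Hence $\widehat{M'}$, and therefore $\widehat M$, is Burch. The one delicate point is the colon–completion identity (ii): it genuinely uses that $L$ (equivalently $L/M'$) is finitely generated so that $\Hom_R(\m,L/M')$ is well behaved under completion — which is guaranteed here by the blanket finite-generation hypothesis — so I expect this to be the only step requiring care.
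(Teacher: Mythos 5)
Your proof is correct, but it takes a genuinely different route from the paper's. The paper dispatches this lemma in a single sentence by citing the equivalences (1)$\Leftrightarrow$(2) or (1)$\Leftrightarrow$(4) of \cite[Lem.~3.9]{DK23}, i.e.\ an intrinsic reformulation of the Burch property that is visibly stable under completion; you instead verify from the raw definition that the inequality $\m(M':_L\m)\neq\m M'$ survives the base change $-\otimes_R\widehat R$. Your three ingredients are all sound: (i) $\widehat{\m N}=\widehat{\m}\,\widehat N$ for a finitely generated submodule $N\subseteq L$ follows from flatness preserving images; (ii) $\widehat{(M':_L\m)}=(\widehat{M'}:_{\widehat L}\widehat{\m})$ follows from tensoring the left-exact sequence you describe (and here you could even sidestep the $\Hom$--base-change argument by realizing $(M':_L\m)$ as the kernel of $L\to (L/M')^{\oplus t}$, $x\mapsto(\overline{a_ix})_i$, for a generating set $a_1,\dots,a_t$ of $\m$, and tensoring that instead); and (iii) faithful flatness detects equality of finitely generated submodules. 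The blanket finite-generation hypothesis of the paper covers the one delicate point you flag. What your approach buys is a self-contained argument that moreover shows $\widehat M$ is Burch \emph{as a submodule of} $\widehat L$, not merely abstractly Burch; what the paper's approach buys is brevity, at the price of leaning on the external characterization in \cite{DK23}.
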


A key property of a Burch module of positive depth is the following. 

\begin{lemma}\label{lem:DK23-3.6}{\rm \cite[Lem.~3.6]{DK23}}
     Let $M$ be a Burch submodule of an $R$-module $L$ such that $\depth(M) > 0$. Then there exists an $M$-regular element $x \in \m$ such that $k$ is a direct summand of $M/xM$. 
\end{lemma}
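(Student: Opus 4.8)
The plan is to reduce the statement to producing a single $M$-regular element $x\in\m$ together with an element $u\in M$ whose image $\bar u$ in $M/xM$ is nonzero, is a minimal generator, and is annihilated by $\m$. This reduction suffices: such a $\bar u$ gives $R\bar u\cong R/\m=k$, and if one extends $\bar u$ to a minimal generating set of $M/xM$ and lets $N'$ be the submodule generated by the remaining generators, then $(M/xM)/N'$ is cyclic and killed by $\m$, hence $\cong k$, and the composite $R\bar u\hookrightarrow M/xM\twoheadrightarrow (M/xM)/N'$ is an isomorphism; therefore $M/xM=R\bar u\oplus N'$, exhibiting $k$ as a direct summand. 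Since $x\in\m$ forces $xM\subseteq\m M$, the condition on $\bar u$ amounts simply to: $\m u\subseteq xM$ and $u\notin\m M$.

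To build such $x$ and $u$, I would first unwind the Burch hypothesis: from $\m(M:_L\m)\not\subseteq\m M$ one gets a finite sum $\sum_j a_jy_j$ with $a_j\in\m$ and $y_j\in(M:_L\m)$ lying outside $\m M$, hence a single term $ay$ with $a\in\m$, $y\in(M:_L\m)$ and $ay\notin\m M$; note $ay\in\m y\subseteq M$. The two facts to carry forward are $\m y\subseteq M$ and $ay\notin\m M$. Next, since $\depth(M)>0$ we have $\m\notin\Ass(M)$, so $\m^2\not\subseteq\fp$ for every $\fp\in\Ass(M)$; hence the coset $a+\m^2$ is contained in no associated prime of $M$, and by the coset form of prime avoidance there is $b\in\m^2$ with $x:=a+b$ lying outside every prime in $\Ass(M)$, i.e.\ $x\in\m$ is $M$-regular. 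Now put $u:=xy\in\m y\subseteq M$. Then $\m u=x(\m y)\subseteq xM$, so $\m$ kills $\bar u$; and $u=ay+by$ with $by\in\m^2 y\subseteq\m(\m y)\subseteq\m M$ while $ay\notin\m M$, so $u\notin\m M$, and in particular $\bar u\neq 0$. By the first paragraph, $k$ is a direct summand of $M/xM$ with $x\in\m$ an $M$-regular element, as desired.

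The one genuinely delicate point is the perturbation $a\mapsto x=a+b$: one must simultaneously force $x$ to be a nonzerodivisor on $M$ (which requires moving $a$ off the finitely many associated primes of $M$) and preserve both $\m u\subseteq xM$ and $u\notin\m M$. This goes through because of the interplay of the two Burch facts: altering $a$ inside $\m^2$ is invisible modulo $\m M$ after multiplying by $y$ (as $\m^2 y\subseteq\m M$), while $\m\cdot(xy)\subseteq xM$ holds automatically from $\m y\subseteq M$ regardless of which $x\in\m$ we use; and the freedom to perturb within $\m^2$ is available precisely because $\depth(M)>0$ keeps $\m$ out of $\Ass(M)$. The remaining ingredients --- the coset version of prime avoidance and the splitting of a copy of $k$ sitting on top of a module --- are standard.
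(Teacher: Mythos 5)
Your argument is correct: the reduction to finding $u\in M\setminus\m M$ with $\m u\subseteq xM$, the extraction of a single product $ay\notin\m M$ from the Burch condition, the coset prime-avoidance perturbation $x=a+b$ with $b\in\m^2$ (valid since $\depth(M)>0$ keeps $\m$, hence $\m^2$, out of every associated prime of $M$), and the final splitting of $R\bar u\cong k$ off $M/xM$ all go through. The paper itself gives no proof, citing \cite[Lem.~3.6]{DK23}, and your argument is essentially the standard one used there.
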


Inherent to the definition of Burch submodule is the role of how a module $M$ embeds in another module $L$. We may wonder how important the role of this bigger module is.

\begin{question}\label{ques}
    Let $f:M\to X$ and $g:M\to Y$ be injective $R$-module homomorphisms. If $f(M)$ is a Burch submodule of $X$, then is $g(M)$ a Burch submodule of $Y$?
\end{question}

In order to give a partial answer to \Cref{ques}, we prove the following.  

\begin{lemma}\label{le}
    Let $U, V$ and $W$ be $R$-modules such that $U\subseteq V$ and there exists an injective homomorphism $f:V\to W$. Consider an ideal $I$ of $R$. Then, $f\big((U :_V I)\big) = \big(f(U):_{f(V)}I\big)$. 
\end{lemma}

\begin{proof}
    First, note that $I f\big((U:_V I)\big) = f\big(I(U:_V I)\big) \subseteq f(U)$. Thus, $f\big((U:_V I)\big)\subseteq \big(f(U):_{f(V)}I\big)$. For reverse inclusion, let $w\in \big(f(U):_{f(V)}I\big)$. Then, $w=f(v)$ for some $v\in V$, and $f(Iv)=If(v)=Iw\subseteq f(U)$. Hence, since $f$ is injective, $Iv\subseteq U$, i.e., $v\in (U:_V I)$. So $w=f(v)\in f\big((U:_VI)\big)$.
\end{proof}

In a very special case, the next lemma provides an affirmative answer to \Cref{ques}, and this will be used extensively in the next section.

\begin{lemma}\label{lem:Burch-comp}
    Let $f:M\to X$ and $h:X\to Y$ be injective $R$-module homomorphisms. If $f(M)$ is a Burch submodule of $X$, then $h(f(M))$ is a Burch submodule of $Y$. 
\end{lemma}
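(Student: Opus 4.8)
The plan is to reduce the two-step composition to the single-step picture already available, and then directly verify the Burch condition for the composed embedding using \Cref{le}. First I would set $N := f(M) \subseteq X$, so by hypothesis $N$ is a Burch submodule of $X$, i.e. $\m(N :_X \m) \nsubseteq \m N$. The goal is to show $h(N)$ is a Burch submodule of $Y$, i.e. $\m\big(h(N) :_{h(X)} \m\big) \nsubseteq \m\, h(N)$; note that once this is established, the statement for $h(f(M)) = h(N)$ follows since $h\circ f$ is injective (being a composition of injections) and $h(f(M)) \subseteq h(X) \subseteq Y$, and being a Burch submodule of $Y$ only requires being a Burch submodule of \emph{some} containing module.

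The key step is to apply \Cref{le} with $U = N$, $V = X$, $W = Y$, $f = h$, and $I = \m$: this gives $h\big((N :_X \m)\big) = \big(h(N) :_{h(X)} \m\big)$. Multiplying both sides by $\m$ and using that $h$ is $R$-linear yields $h\big(\m(N :_X \m)\big) = \m\big(h(N) :_{h(X)} \m\big)$; similarly $h(\m N) = \m\, h(N)$. Now since $h$ is injective, it reflects strict containments of submodules: from $\m(N :_X \m) \nsubseteq \m N$ we get $h\big(\m(N :_X \m)\big) \nsubseteq h(\m N)$, because if we had $h\big(\m(N :_X \m)\big) \subseteq h(\m N)$ then injectivity of $h$ would force $\m(N :_X \m) \subseteq \m N$, a contradiction. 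Substituting the two identities just derived, this is exactly $\m\big(h(N) :_{h(X)} \m\big) \nsubseteq \m\, h(N)$, which is the desired Burch condition. Finally I would record that $h(f(M)) = h(N)$ is thus a Burch submodule of $Y$, hence a Burch $R$-module, completing the proof.

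There is essentially no obstacle here: the proof is a routine application of \Cref{le} together with the elementary observation that an injective homomorphism reflects non-containments of submodules; the only thing to be careful about is matching up the "colon in $X$" on the source side with the "colon in $h(X)$" on the target side, which is precisely the content of \Cref{le}. One could streamline further by noting that $h$ restricts to an isomorphism $X \xrightarrow{\sim} h(X)$ of $R$-modules carrying $N$ to $h(N)$, so that the Burch property — being expressible entirely in terms of the pair $(N \subseteq X)$ and the $R$-module structure — transfers across this isomorphism; but the explicit computation via \Cref{le} is cleaner to write and is the version I would include.
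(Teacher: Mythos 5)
Your proof is correct and is essentially the paper's argument run forwards instead of by contradiction: both rest on \Cref{le} applied to $h$ with $I=\m$, on the $R$-linearity and injectivity of $h$ to transfer the non-containment $\m(N:_X\m)\nsubseteq \m N$ to $\m\big(h(N):_{h(X)}\m\big)\nsubseteq \m\, h(N)$, and on the inclusion $\big(h(N):_{h(X)}\m\big)\subseteq \big(h(N):_Y\m\big)$ to pass from the ambient module $h(X)$ to $Y$. Only phrase that last step as this explicit inclusion rather than ``being a Burch submodule of $Y$ only requires being a Burch submodule of some containing module'': by definition, being a Burch submodule of $Y$ requires the colon computed in $Y$, and what you actually need (and have) is that the colon can only grow when the ambient module is enlarged.
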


\begin{proof}
    Put $U:=f(M)$. Then $U$ is a submodule of $X$, and $h(U)$ is a submodule of $h(X)$. If possible, suppose that $h(U)$ is not a Burch submodule of $Y$. Then, $\m h(U)=\m\big(h(U):_Y \m\big)$. Hence the inclusions
    \[
        \m h(U)\subseteq \m \big(h(U):_{h(X)}\m\big) \subseteq \m\big(h(U):_Y \m\big)
    \]
    imply that $\m h(U)=\m \big(h(U):_{h(X)}\m\big) =\m \,h\big((U:_X \m)\big)$, where the last equality is obtained from \Cref{le}. Thus, $h(\m U)=h\big(\m(U:_X \m)\big)$. Since $h$ is injective, it follows that $\m U=\m(U:_X \m)$, i.e., $U=f(M)$ is not Burch in $X$, which is a contradiction. So $h(f(M))$ is a Burch submodule of $Y$.
\end{proof}

Burch modules have maximal (injective) complexity and curvature.

\begin{proposition}\label{prop:GS24-5.1}{\rm \cite[Thm.~4]{Avr96}, \cite[Thm.~5.1]{GS24}}
    Let $M$ be a Burch module. Then
    \begin{enumerate}[\rm (1)]
        \item $\cx_R(M)=\cx_R(k)$ and $\injcx_R(M)=\cx_R(k)$.
        \item $\curv_R(M)=\curv_R(k)$ and $\injcurv_R(M)=\curv_R(k)$.
    \end{enumerate}
\end{proposition}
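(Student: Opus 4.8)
The inequalities $\cx_R(M)\le\cx_R(k)$, $\injcx_R(M)\le\cx_R(k)$, $\curv_R(M)\le\curv_R(k)$ and $\injcurv_R(M)\le\curv_R(k)$ are exactly \Cref{prop:properties-cx-curv}(1), so the whole task is to establish the reverse inequalities $\cx_R(k)\le\cx_R(M)$, $\cx_R(k)\le\injcx_R(M)$, $\curv_R(k)\le\curv_R(M)$ and $\curv_R(k)\le\injcurv_R(M)$. The plan is to deduce these from \Cref{lem:DK23-3.6}, which --- once $\depth_R(M)>0$ --- produces an $M$-regular element $x\in\m$ with $k$ a direct summand of $M/xM$. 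So there are two moves: first reduce to the case $\depth_R(M)>0$, and then carry out a short computation with the exact sequence $0\to M\xrightarrow{x}M\to M/xM\to0$.

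For the reduction to positive depth I would adjoin an indeterminate. Put $R':=R[t]_{\m R[t]}$ (a Noetherian local ring with maximal ideal $\m R'$ and residue field $k(t)$) and $M':=M\otimes_R R'$, and repeat finitely often. Since $R\to R'$ is faithfully flat with $\depth_{R'}(M')=\depth_R(M)+1$, after finitely many steps the depth becomes positive; minimal free resolutions base-change to minimal free resolutions and flat base change for $\Ext$ gives $\beta^{R'}_n(M')=\beta^R_n(M)$ and $\mu^n_{R'}(M')=\mu^n_R(M)$ (and likewise with $M,M'$ replaced by $k$ and $k':=R'/\m R'$), so all four invariants of $M$ and of $k$ are unchanged; and $M'$ stays Burch, because for a submodule $N\subseteq L$ the colon $(N:_L\m)$ --- being a finite intersection of kernels of multiplication maps --- commutes with $-\otimes_R R'$, so a strict inclusion $\m M\subsetneq\m(M:_L\m)$ in a module $L$ witnessing Burchness survives the base change. (One could instead first pass to $\widehat R$ via \Cref{burchcomplete}, but it is not needed here.) Hence we may assume $\depth_R(M)>0$.

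Now pick $x\in\m$ that is $M$-regular with $k$ a direct summand of $M/xM$ (\Cref{lem:DK23-3.6}), so $0\to M\xrightarrow{x}M\to M/xM\to0$ is exact. Applying $-\otimes_R k$ and using that $\m$ --- hence $x$ --- annihilates every $\Tor^R_n(M,k)$, the induced long exact sequence breaks into short exact sequences $0\to\Tor^R_n(M,k)\to\Tor^R_n(M/xM,k)\to\Tor^R_{n-1}(M,k)\to0$, so $\beta^R_n(M/xM)=\beta^R_n(M)+\beta^R_{n-1}(M)$; dually, applying $\Hom_R(k,-)$ and using that $\m$ annihilates every $\Ext^n_R(k,M)$, it breaks into $0\to\Ext^n_R(k,M)\to\Ext^n_R(k,M/xM)\to\Ext^{n+1}_R(k,M)\to0$, so $\mu^n_R(M/xM)=\mu^n_R(M)+\mu^{n+1}_R(M)$. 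Since $k$ is a direct summand of $M/xM$, $\beta^R_n(k)\le\beta^R_n(M)+\beta^R_{n-1}(M)$ and $\mu^n_R(k)\le\mu^n_R(M)+\mu^{n+1}_R(M)$ for all $n$; then \Cref{lem:DGS-2.7-2.2} (an index shift changes neither complexity nor curvature) yields $\cx_R(k)\le\cx_R(M)$, $\injcx_R(k)\le\injcx_R(M)$, $\curv_R(k)\le\curv_R(M)$ and $\injcurv_R(k)\le\injcurv_R(M)$. Since $\injcx_R(k)=\cx_R(k)$ and $\injcurv_R(k)=\curv_R(k)$ by \Cref{prop:properties-cx-curv}(1), all four reverse inequalities follow, which finishes the argument. (This paragraph is essentially the content of \cite[Thm.~4]{Avr96}: for an $M$-regular $x$ with $k\mid M/xM$, the four invariants of $M$ are forced to be maximal.)

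The step I expect to be the actual obstacle is the reduction to positive depth --- concretely, checking that Burchness is stable under $-\otimes_R R[t]_{\m R[t]}$, i.e.\ the identity $(N:_L\m)\otimes_R R'=\bigl(N\otimes_R R':_{L\otimes_R R'}\m R'\bigr)$ --- because being Burch is a literal non-containment rather than a homological condition and so is not transparently base-change stable. One could instead try to reach positive depth by replacing $M$ with a high syzygy $\syz^t_R(M)$ (which has the same complexity and curvature as $M$), but that route needs the fact that syzygies of Burch modules are Burch, which is not in the excerpt; adjoining a variable avoids this and also handles $\depth R=0$. Everything after the reduction is the short formal computation above, and the case $\pd_R(M)<\infty$ takes care of itself: then $k\mid M/xM$ with $M/xM$ of finite projective dimension forces $R$ regular, so all the invariants vanish and the inequalities hold trivially.
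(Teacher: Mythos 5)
The paper itself does not prove this proposition --- it is quoted from \cite[Thm.~4]{Avr96} and \cite[Thm.~5.1]{GS24} --- so your argument has to stand entirely on its own. The positive-depth half does: once $\depth_R(M)>0$, \Cref{lem:DK23-3.6} supplies an $M$-regular $x$ with $k$ a direct summand of $M/xM$, and your computation $\beta_n^R(M/xM)=\beta_n^R(M)+\beta_{n-1}^R(M)$ and $\mu_R^n(M/xM)=\mu_R^n(M)+\mu_R^{n+1}(M)$, combined with \Cref{lem:DGS-2.7-2.2} and the equalities $\injcx_R(k)=\cx_R(k)$, $\injcurv_R(k)=\curv_R(k)$ from \Cref{prop:properties-cx-curv}, correctly forces all four invariants of $M$ to be maximal.

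The gap is exactly where you predicted trouble, but it is not the stability of Burchness under base change --- it is the depth claim itself. For $R'=R[t]_{\m R[t]}$ the closed fibre of the faithfully flat map $R\to R'$ is the field $k(t)$, so $\depth_{R'}(M\otimes_RR')=\depth_R(M)+\depth_{k(t)}(M\otimes_Rk(t))=\depth_R(M)$: this base change \emph{preserves} depth rather than raising it (e.g.\ for $R=k[x]/(x^2)$ the ring $R'$ still has depth $0$), so iterating it never reaches positive depth and the reduction collapses. The gadget that does raise depth is $S=R[X]_{\langle\m,X\rangle}$, with $X$ in the maximal ideal $\fn$; but there $M\otimes_RS=M[X]_{\langle\m,X\rangle}$ is never a Burch submodule of $L[X]_{\langle\m,X\rangle}$, because $(M[X]:_{L[X]}X)=M[X]$ forces $(M[X]:_{L[X]}\fn)=M[X]$ and hence equality in the Burch condition. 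This is precisely why the paper's \Cref{lem:GS24-lem-4.8} replaces $M$ by $M'=(M+XL[X])_{\langle\m,X\rangle}$, which is Burch of positive depth but satisfies $M'/XM'\cong M\oplus L/M$; running your argument on $M'$ only bounds $\sup\{\cx_R(M),\cx_R(L/M)\}$ from below, not $\cx_R(M)$ itself. The depth-zero case therefore needs a genuinely different input (such as the second-syzygy direct-summand results of \cite{DKT20,DK23}), which is what the cited \cite[Thm.~5.1]{GS24} supplies and what your proof does not recover.
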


The next lemma is important for proving results about Burch modules of depth zero.

\begin{lemma}\label{lem:GS24-lem-4.8}{\rm \cite[Lem.~4.8]{GS24}}
    Let $S= R[X]_{\langle \m, X \rangle}$. Let $M$ be a submodule of an $R$-module $L$. Set $L':= L[X]_{\langle \m,X\rangle}$ and $M' := (M + XL[X])_{\langle \m,X\rangle}$. Then
    \begin{enumerate}[\rm (1)]
        \item If $M$ is a Burch submodule of $L$, then $M'$ is a Burch $S$-submodule of $L'$.
        \item $M'/XM'\cong M \oplus L/M$ as $R$-modules.
    \end{enumerate}
\end{lemma}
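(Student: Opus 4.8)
The plan is to carry out everything at the level of the polynomial ring $T := R[X]$ and then localize at $\fn := \langle \m, X\rangle$, so that $S = T_\fn$ is local with maximal ideal $\fn S = \m S + XS$, with $S/XS \cong R$, and $X$ a nonzerodivisor on $S$. The first step is the elementary identification, inside $L[X]$, of the $T$-submodule $N := M + XL[X]$ with $\{f \in L[X] : f(0) \in M\}$, where $f(0)$ denotes the $X^0$-coefficient of $f$; in particular this set is already a $T$-submodule, $X$ is a nonzerodivisor on it, and $M' = N_\fn$ inside $L' = L[X]_\fn$. I would also record that $\fn$ is generated over $T$ by $\m$ together with $X$.

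For part (2), I would write down the $R$-linear map $\psi\colon N \to M \oplus (L/M)$, $\psi(f) = \big(f(0), \overline{c_1(f)}\big)$, where $c_1(f)$ is the $X^1$-coefficient of $f$ and the bar is reduction modulo $M$. A direct check shows $\psi$ is surjective (take $f = m + \ell X$) with kernel exactly $XN = \{g \in L[X] : g(0) = 0,\ c_1(g) \in M\}$, so $N/XN \cong M \oplus L/M$ as $R$-modules. Because $X$ annihilates $N/XN$, this is a module over $T/XT \cong R$ on which $\fn$ acts through the maximal ideal $\m$, so localizing at $\fn$ changes nothing; hence $M'/XM' = N_\fn/(XN)_\fn = (N/XN)_\fn = N/XN \cong M \oplus L/M$. (Equivalently, $M'/XM' \cong (N/XN) \otimes_R S/XS = (N/XN)\otimes_R R$.)

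For part (1), using that $M$ is Burch in $L$, pick $z \in \m(M :_L \m)$ with $z \notin \m M$ and write $z = \sum_j a_j \ell_j$ with $a_j \in \m$, $\ell_j \in (M :_L \m)$; regard $z$ and the $\ell_j$ as constant polynomials in $L'$. I would then verify two assertions. First, each $\ell_j$ lies in $(M' :_{L'} \fn S)$: indeed $(\fn S)(\ell_j/1)$ is generated over $S$ by $\{m\ell_j/1 : m \in \m\} \cup \{X\ell_j/1\}$, and $m\ell_j \in M \subseteq N$ while $X\ell_j \in XL[X] \subseteq N$; since each $a_j/1 \in \fn S$, this already gives $z/1 \in (\fn S)\,(M' :_{L'} \fn S)$. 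Second, $z \notin (\fn S)M'$: one has $(\fn S)M' = (\fn N)_\fn$, and the $X^0$-coefficient of every element of $\fn N$ lies in $\m M$ (the constant term of $\sum_i p_i g_i$ with $p_i(0)\in\m$ and $g_i(0)\in M$ lies in $\m M$); so $z/1 \in (\fn N)_\fn$ would yield $t \in T\setminus\fn$ with $tz \in \fn N$, hence $t(0)z \in \m M$ with $t(0)$ a unit of $R$, forcing $z \in \m M$ --- a contradiction. Thus $(\fn S)\,(M' :_{L'} \fn S) \not\subseteq (\fn S)M'$, that is, $M'$ is a Burch $S$-submodule of $L'$.

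The one place requiring real care --- the nearest thing to an obstacle --- is the bookkeeping needed to push ideal--module products, colon submodules, and the ``constant-term'' functional through the localization $T \to S$: everything rests on the identity $I_W M_W = (IM)_W$ and on the fact that every element of $T \setminus \fn$ has a unit as its constant term, but one must arrange the argument so that the membership and non-membership statements about $z$ in $L' = L[X]_\fn$ faithfully reflect the corresponding statements in $L[X]$. Nothing here is deep, but it is precisely where a careless version of the argument would go wrong.
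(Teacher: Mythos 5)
The paper offers no proof of this lemma --- it is quoted verbatim from \cite[Lem.~4.8]{GS24} --- so there is nothing internal to compare against; your write-up is a correct, self-contained verification of the cited result. The key identification $M+XL[X]=\{f\in L[X]: f(0)\in M\}$ is right; for (2) the map $\psi(f)=\big(f(0),\overline{c_1(f)}\big)$ is indeed surjective with kernel exactly $XN$, and the localization step is handled correctly because $N/XN$ is killed by $X$, hence is a module over $T/XT\cong R$ on which localizing at $\fn$ acts as localizing the already-local ring $R$ at $\m$. For (1), both halves check out: $\ell_j/1\in(M':_{L'}\fn S)$ because $\m\ell_j\subseteq M$ and $X\ell_j\in XL[X]$, and the non-membership $z/1\notin(\fn S)M'$ follows from the constant-term computation together with the fact that any $t\in T\setminus\fn$ has unit constant term, so $tz\in\fn N$ would force $z\in\m M$. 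One small bonus: your argument makes explicit what the remark immediately following the lemma in the paper asserts, namely that the Burch hypothesis enters only in part (1), while part (2) is an unconditional statement about the pair $(M,L)$.
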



\begin{remark}
    In the common hypothesis of all the statements in \cite[Lem.~4.8]{GS24}, it is assumed that $M$ is a Burch submodule of $L$. Note that the Burch property of $M$ is required for the proof of statement (2) in \cite[Lem.~4.8]{GS24}, and is not needed elsewhere.
\end{remark}

The following lemma ensures that the (injective) complexity and curvature of a module are preserved under the base change from $R$ to $R[X]_{\langle \m,X\rangle}$.

\begin{lemma}\label{lem:cx-curv-N}
    Let $N$ be an $R$-module. Set $S := R[X]_{\langle \m,X\rangle}$. Then $S$ is a local ring with the same residue field as that of $R$. We may consider $N$ as an $S$-module via the isomorphism $S/XS\cong R$. Moreover, the following hold.
    \begin{enumerate}[\rm (1)]
        \item $\cx_S(N) = \cx_R(N)$ and $\curv_S(N) = \curv_R(N)$.
        \item $\injcx_S(N) = \injcx_R(N)$ and $\injcurv_S(N) = \injcurv_R(N)$.
    \end{enumerate}
\end{lemma}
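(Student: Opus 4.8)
The plan is to reduce to \Cref{lem:cx-curv-mod-x} by replacing the $S$-module $N$, on which $X$ acts as zero, by the $S$-module $\widetilde N := N\otimes_R S = N[X]_{\langle \m,X\rangle}$, on which $X$ is a nonzerodivisor. First I would record the routine structural facts: $\langle \m,X\rangle$ is a maximal ideal of $R[X]$ with $R[X]/\langle \m,X\rangle\cong k$, so $S$ is Noetherian local with residue field $k$; furthermore $X$ is a nonzerodivisor of $S$ and $S/XS\cong R$ (the isomorphism by which $N$ is regarded as an $S$-module). Since $N$ is finitely generated over $R$ and $S$ is Noetherian, $\widetilde N$ is a finitely generated $S$-module; as localization is exact, $X$ is $\widetilde N$-regular, and $\widetilde N/X\widetilde N\cong N\otimes_R(S/XS)\cong N$ as $S$-modules. (If $N=0$ the statement is trivial, so we may assume $N\neq 0$.)

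Applying \Cref{lem:cx-curv-mod-x} over the ring $S$ to the module $\widetilde N$ and the $S$- and $\widetilde N$-regular element $X$ yields
\[
\cx_S(\widetilde N)=\cx_R(N),\quad \curv_S(\widetilde N)=\curv_R(N),\quad \injcx_S(\widetilde N)=\injcx_R(N),\quad \injcurv_S(\widetilde N)=\injcurv_R(N).
\]
Hence it only remains to compare the invariants of $N$ and $\widetilde N$ over $S$. For this I would use the short exact sequence of $S$-modules $0\to\widetilde N\xrightarrow{\,\cdot X\,}\widetilde N\to N\to 0$ coming from the two previous facts. Feeding it into the long exact sequences for $\Tor^S_\bullet(-,k)$ and for $\Ext^\bullet_S(k,-)$, and using that the map induced by $\cdot X$ agrees with multiplication by $X$, which is zero on $\Tor^S_n(\widetilde N,k)$ and on $\Ext^n_S(k,\widetilde N)$ because $X\in\fn_S$ annihilates these modules, the long exact sequences break into short exact sequences. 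This gives
\[
\beta^S_n(N)=\beta^S_n(\widetilde N)+\beta^S_{n-1}(\widetilde N) \quad\text{and}\quad \mu^n_S(N)=\mu^n_S(\widetilde N)+\mu^{n+1}_S(\widetilde N)
\]
for all $n$ (with the convention $\beta_{-1}:=0$).

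Finally I would invoke \Cref{lem:DGS-2.7-2.2}: the complexity and curvature of a sum of two sequences equal the supremum of the two, and shifting the index of a sequence by one affects neither $\cx$ nor $\curv$ (immediate from \Cref{cx-sequence}). Therefore $\cx_S(N)=\cx_S(\widetilde N)$, $\curv_S(N)=\curv_S(\widetilde N)$, $\injcx_S(N)=\injcx_S(\widetilde N)$ and $\injcurv_S(N)=\injcurv_S(\widetilde N)$, which combined with the displayed equalities above proves both (1) and (2). I do not expect a genuine obstacle here; the one point needing care is the vanishing of the $\cdot X$ maps on $\Tor^S_\bullet(\widetilde N,k)$ and $\Ext^\bullet_S(k,\widetilde N)$ — this is exactly what splits the long exact sequences into the short exact sequences above, and is where it is used that $X$ lies in the maximal ideal of $S$.
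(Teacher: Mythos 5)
Your proof is correct and follows essentially the same route as the paper: both pass to $N' = N[X]_{\langle \m,X\rangle}$, observe that $X$ is regular on $S$ and on $N'$ with $N'/XN'\cong N$, and then combine a comparison of $N$ with $N'$ over $S$ with the change-of-rings \Cref{lem:cx-curv-mod-x}. The only difference is that for the step $\cx_S(N)=\cx_S(N')$ (and its curvature and injective analogues) the paper simply cites \cite[Lem.~3.11]{DGS24}, whereas you prove it directly from the long exact sequences attached to $0\to N'\xrightarrow{\,X\,} N'\to N\to 0$, using that $X$ lies in the maximal ideal of $S$ and hence acts as zero on $\Tor^S_{\bullet}(N',k)$ and $\Ext^{\bullet}_S(k,N')$ — a correct, self-contained argument for the cited fact.
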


\begin{proof}
    Set $N' := N[X]_{\langle \m,X\rangle}$. Note that $X$ is $N'$-regular, and $N'/XN'\cong N$.
    
    (1) The first equality is obtained as follows:
    \begin{align*}
        \cx_S(N)= \cx_S(N'/XN') & = \cx_S(N') \mbox{ \ \ (by \cite[Lem.~3.11.(1)]{DGS24})}\\
        & = \cx_{S/XS}(N'/XN') \mbox{ \ \ (by \Cref{lem:cx-curv-mod-x}.(1))}\\
        & = \cx_R(N).
    \end{align*}
    The same equalities hold for curvature as well.
    
    (2) Similarly, as in (1), the desired equalities on injective complexity and curvature can be obtained by using \cite[Lem.~3.11.(2)]{DGS24} and \Cref{lem:cx-curv-mod-x}.(2).
\end{proof}

For a submodule $M$ of $L$, we now see how Ext and Tor modules involving $M$ behave if $M$ over $R$ is changed to $M' := (M + XL[X])_{\langle \m,X\rangle}$ over $R[X]_{\langle \m,X\rangle}$.

\begin{lemma}\label{lem:Tor-Ext-iso-over-S-R}
    Let $M$ be a submodule of an $R$-module $L$, and let $N$ be an $R$-module. Set $S := R[X]_{\langle \m,X\rangle}$, $L':= L[X]_{\langle \m,X\rangle}$ and $M' := (M + XL[X])_{\langle \m,X\rangle}$. We may consider $N$ as an $S$-module via the isomorphism $S/XS\cong R$. 
    Then we have the following $R$-module isomorphisms.
    \begin{enumerate}[\rm (1)]
        \item $\Tor_n^S(M',N) \cong \Tor_n^R(M, N) \oplus \Tor_n^R(L/M, N)$ for every $n\ge 0$.
        \item $\Ext^n_S(M',N) \cong \Ext^n_R(M, N) \oplus \Ext^n_R(L/M, N)$ for every $n\ge 0$.
        \item $\Ext^{n+1}_S(N,M') \cong \Ext^{n}_R(N, M) \oplus \Ext^{n}_R(N,L/M)$ for every $n\ge 0$.
    \end{enumerate}
\end{lemma}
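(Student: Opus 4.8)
The plan is to deduce all three isomorphisms from a single change-of-rings argument along the regular element $X$, combined with the splitting $M'/XM'\cong M\oplus L/M$ from \Cref{lem:GS24-lem-4.8}(2). First I would record the structural facts that set this up. Since $R[X]$ is free over $R$, multiplication by $X$ is injective on $R[X]$ and on $L[X]\cong L\otimes_R R[X]$ (if $X\cdot\sum \ell_i X^i=0$ then every $\ell_i=0$); localizing at $\langle\m,X\rangle$, $X$ is a nonzerodivisor on $S$ and on $L'$, hence on its submodule $M'\subseteq L'$. By construction $N$ is an $S$-module annihilated by $X$, so it is naturally a module over $S/XS\cong R$, and $M'/XM'\cong M\oplus L/M$ as $S/XS$-modules (equivalently, as $R$-modules) by \Cref{lem:GS24-lem-4.8}(2). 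Note also that $XN=0$ forces $X$ to act as $0$ on each $\Tor^S_n(M',N)$, $\Ext^n_S(M',N)$ and $\Ext^n_S(N,M')$, so these are all $S/XS\cong R$-modules, and the asserted isomorphisms will be isomorphisms of $R$-modules.

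Next I would invoke the standard change-of-rings isomorphisms: for a ring $S$ with a nonzerodivisor $X$, an $S$-module $M'$ on which $X$ is regular, and an $(S/XS)$-module $N$, one has $\Tor_n^S(M',N)\cong\Tor_n^{S/XS}(M'/XM',N)$, $\Ext^n_S(M',N)\cong\Ext^n_{S/XS}(M'/XM',N)$, and $\Ext^{n+1}_S(N,M')\cong\Ext^n_{S/XS}(N,M'/XM')$, for all $n\ge0$. The last two are \cite[p.~140, Lem.~2]{Mat89} (used in the same way in the proof of \Cref{lem:cx-curv-mod-x}); the first follows by taking a free $S$-resolution $F_\bullet\to M'$, observing that $F_\bullet/XF_\bullet\to M'/XM'$ is a free $(S/XS)$-resolution because $X$ is $M'$-regular, and identifying $F_\bullet/XF_\bullet\otimes_{S/XS}N$ with $F_\bullet\otimes_S N$. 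Substituting $M'/XM'\cong M\oplus L/M$, using additivity of $\Tor$ and $\Ext$ in finite direct sums, and transporting structure along $S/XS\cong R$ then yields (1), (2) and (3) respectively.

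The proof is an assembly of standard facts, so there is no serious obstacle; the two points that need care are the verification that $X$ is a nonzerodivisor on $M'$ — needed both for $F_\bullet/XF_\bullet$ to be a resolution and for the hypotheses of \cite[p.~140, Lem.~2]{Mat89}, and supplied by the embedding of $M'$ into the $X$-torsion-free module $L'$ — and the bookkeeping of the degree shift in the second-variable $\Ext$ change of rings, which is exactly what produces the $n+1\leftrightarrow n$ shift appearing in part (3).
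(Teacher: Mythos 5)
Your proposal is correct and follows essentially the same route as the paper: the change-of-rings isomorphisms along the regular element $X$ (citing \cite[p.~140, Lem.~2]{Mat89}) combined with the splitting $M'/XM'\cong M\oplus L/M$ from \Cref{lem:GS24-lem-4.8}.(2) and additivity of $\Tor$ and $\Ext$. Your extra care in verifying that $X$ is a nonzerodivisor on $M'$ via the embedding $M'\subseteq L'$ is a welcome detail that the paper leaves implicit.
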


\begin{proof}
     Note that $X$ is a regular element on $S$ and $M'$. Therefore
    \begin{align}
        \Tor_n^S(M',N) & \cong \Tor_n^{S/XS}(M'/XM',N) \mbox{ \quad (by \cite[p.~140, Lem.~2]{Mat89})} \label{Mat.p140.lem2}\\
        & \cong \Tor_n^R(M\oplus L/M, N) \mbox{ \quad  (by \Cref{lem:GS24-lem-4.8}.(2))} \nonumber \\
        & \cong \Tor_n^R(M,N) \oplus \Tor_n^R(L/M,N) \mbox{ for all $n\ge0$}.\nonumber
    \end{align}
    Replacing Tor by Ext, similar isomorphisms as in \eqref{Mat.p140.lem2} yield (2). 
    Since
    \begin{align*}
        \Ext^{n+1}_S(N,M') & \cong \Ext^{n}_{S/XS}(N,M'/XM') \mbox{\quad (by \cite[p.~140, Lem.~2]{Mat89})}\\
        & \cong \Ext^n_R(N,M\oplus L/M) \mbox{\quad (by \Cref{lem:GS24-lem-4.8}.(2))}\\
        & \cong \Ext^n_R(N,M) \oplus \Ext^n_R(N,L/M) \mbox{ for all $n\ge0$},
    \end{align*}
    the proof of (3) follows.
\end{proof}


Some consequences of \Cref{lem:Tor-Ext-iso-over-S-R} are the following.

\begin{lemma}\label{lem:cx-curv-over-S-R}
    With the hypotheses as in {\rm \Cref{lem:Tor-Ext-iso-over-S-R}}, the following hold.
    \begin{enumerate}[\rm (1)]
        \item $\tcx_S(M',N) = \sup\{ \tcx_R(M,N), \tcx_R(L/M,N) \}$.
        \item $\tcurv_S(M',N) = \sup\{ \tcurv_R(M,N), \tcurv_R(L/M,N) \}$.
        \item $\cx_S(M',N) = \sup\{\cx_R(M,N), \cx_R(L/M,N)\}$.
        \item $\curv_S(M',N) = \sup\{\curv_R(M,N), \curv_R(L/M,N)\}$.
        \item $\cx_S(N,M') = \sup\{\cx_R(N,M), \cx_R(N,L/M)\}$.
        \item $\curv_S(N,M') = \sup\{\curv_R(N,M), \curv_R(N,L/M)\}$.
    \end{enumerate}
\end{lemma}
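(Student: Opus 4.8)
The plan is to read off all six identities directly from the module isomorphisms in \Cref{lem:Tor-Ext-iso-over-S-R}, combined with the additivity of the minimal number of generators over direct sums and with \Cref{lem:DGS-2.7-2.2}. Before doing that, I would record two elementary facts. First, for finitely generated modules $A$ and $B$ one has $\mu(A\oplus B)=\mu(A)+\mu(B)$. Second, for an $S$-module $T$ with $XT=0$ one has $\mu_S(T)=\mu_R(T)$: indeed $T\otimes_S k\cong T\otimes_{S/XS}k\cong T\otimes_R k$ since $S/XS\cong R$ and $S$ has the same residue field $k$ as $R$ (as noted in \Cref{lem:cx-curv-N}). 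This second fact reconciles the defining sequences for $\tcx_S$, $\tcurv_S$, $\cx_S$, $\curv_S$ (which use $\mu_S$) with the isomorphisms of \Cref{lem:Tor-Ext-iso-over-S-R}, which are stated as $R$-module isomorphisms; every module appearing there is in fact a module over $S/XS\cong R$ because $X$ is regular on $S$ and $M'$, so those $\mathrm{Tor}$ and $\mathrm{Ext}$ groups are computed after reduction modulo $X$.

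For parts (1)–(4) the argument is then immediate. Applying $\mu$ to $\Tor_n^S(M',N)\cong \Tor_n^R(M,N)\oplus \Tor_n^R(L/M,N)$ gives
\[
\mu_S\big(\Tor_n^S(M',N)\big)=\mu_R\big(\Tor_n^R(M,N)\big)+\mu_R\big(\Tor_n^R(L/M,N)\big)
\]
for all $n\ge 0$; setting $x_n=\mu_R(\Tor_n^R(M,N))$ and $y_n=\mu_R(\Tor_n^R(L/M,N))$ and invoking \Cref{lem:DGS-2.7-2.2}(1),(2) yields the stated formulas for $\tcx_S(M',N)$ and $\tcurv_S(M',N)$. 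Parts (3) and (4) are handled the same way, starting from $\Ext^n_S(M',N)\cong \Ext^n_R(M,N)\oplus \Ext^n_R(L/M,N)$.

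For parts (5) and (6) the isomorphism of \Cref{lem:Tor-Ext-iso-over-S-R}(3) carries a one-step degree shift: $\Ext^{n+1}_S(N,M')\cong \Ext^n_R(N,M)\oplus \Ext^n_R(N,L/M)$. Since $\cx$ only depends on the behaviour of a sequence for $n\gg 0$ and $\curv$ is a $\limsup$, shifting the index by one changes neither invariant; thus $\cx_S(N,M')=\cx\{\mu_S(\Ext^{n+1}_S(N,M'))\}_{n\ge 0}$ and $\curv_S(N,M')=\curv\{\mu_S(\Ext^{n+1}_S(N,M'))\}_{n\ge 0}$, and I would then apply $\mu$ to the isomorphism and \Cref{lem:DGS-2.7-2.2} exactly as before. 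The whole proof is essentially bookkeeping; the only points requiring a moment of care are the identification $\mu_S=\mu_R$ on the modules in play and the harmless index shift in (5)–(6), so I do not expect a genuine obstacle.
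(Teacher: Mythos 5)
Your proof is correct and follows the same route as the paper: apply $\mu$ to the direct-sum isomorphisms of \Cref{lem:Tor-Ext-iso-over-S-R} and invoke \Cref{lem:DGS-2.7-2.2}. The two points you flag explicitly (the identification $\mu_S=\mu_R$ on modules killed by $X$, and the harmless index shift in (5)--(6)) are exactly the details the paper leaves implicit, so your write-up is if anything slightly more careful.
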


\begin{proof}
    Since for any two $R$-modules $U$ and $V$, one has $\mu_R(U\oplus V)= \mu_R(U)+\mu_R(V)$, all the equalities are direct consequences of Lemmas~\ref{lem:DGS-2.7-2.2} and \ref{lem:Tor-Ext-iso-over-S-R} simultaneously.
\end{proof}

The notion of Gorenstein dimension (in short, G-dimension) is due to Auslander, and it is developed by Auslander and Bridger in \cite{AB69}.

\begin{definition}
\begin{enumerate}[\rm (1)]
    \item An $R$-module $G$ is said to have Gorenstein dimension zero, i.e., $\gdim_R(G)=0$, if $G$ is reflexive, and $\Ext^i_R(G, R) = \Ext^i_R(G^*, R) = 0$ for all $i \ge 1$, where $G^* := \Hom_R(G,R)$.
    \item For an $R$-module $M$, $\gdim_R(M)$ is defined to be the infimum of all integers $n \ge 0$ such that there exists an exact sequence $0\rightarrow G_{n} \rightarrow G_{n-1}\rightarrow \cdots \rightarrow G_0\rightarrow M\rightarrow 0$, where $\gdim_R(G_i) = 0$ for all $0 \le i \le n$. If no such $n$ exists, then $\gdim_R(M):=\infty$.
\end{enumerate}
\end{definition}
%

\begin{para}\label{para:Aus-Buch-Thm-1.1}
    Let $\mathcal{A}$ be the category of $R$-modules of Gorenstein dimension zero, and $\mathcal{B}$ be the subcategory of projective $R$-modules. Following the notation as in \cite[p.~9]{Aus-buch}, $\hat{\mathcal{A}}$ and $\hat{\mathcal{B}}$ denote the category of $R$-modules of finite Gorenstein dimension and of finite projective dimension, respectively. Then, in view of \cite[Thm.~1.1]{Aus-buch}, for each $U$ in $\hat{\mathcal{A}}$, there is an exact sequence $0 \rightarrow U \rightarrow V \rightarrow W \rightarrow 0$ with $V$ in $\hat{\mathcal{B}}$ and $W$ in $\mathcal{A}$.
\end{para}

\section{Main results}\label{Main results}

Here, we provide the main results of this article. We start with the following proposition.

\begin{proposition}\label{prop:tcx-tcurv-CI-ring}
    Let $M$ and $N$ be $R$-modules such that $M$ is Burch. Let $\depth(M)\ge 1$, and $ \mathfrak{m}^h \Tor^R_{\gg 0}(M,N) = 0 $ for some $h\ge 1$. Then
    \begin{enumerate}[\rm (1)]
        \item 
        $\cx_R(N) \le \tcx_R(M,N)$ and $\curv_R(N) \le \tcurv_R(M,N)$. When $R$ is a complete intersection, both inequalities become equalities.
        \item 
        Suppose $N$ is also a Burch module. Then the following are equivalent. 
        \begin{enumerate}[\rm (i)]
           \item $R$ is complete intersection $($resp., of codimension $c$$)$.
           \item $\tcx_R(M,N) < \infty$ $($resp., $\tcx_R(M,N) = c < \infty$$)$.
           \item $\tcurv_R(M,N) \le 1$.
        \end{enumerate}
    \end{enumerate}
\end{proposition}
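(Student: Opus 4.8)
The plan is to reduce to the positive-depth situation handled by \Cref{lem:DK23-3.6} and then to exploit the splitting of $k$ off $M/xM$. First I would observe that since $\depth(M)\ge 1$ and $M$ is Burch, \Cref{lem:DK23-3.6} supplies an $M$-regular element $x\in\m$ such that $k$ is a direct summand of $M/xM$; so there is an exact sequence $0\to M\xrightarrow{x} M\to M/xM\to 0$ and an $R$-module decomposition $M/xM\cong k\oplus C$ for some $C$. Applying $-\otimes_R N$ gives a long exact sequence relating $\Tor^R_n(M,N)$, $\Tor^R_n(M/xM,N)$ and the multiplication-by-$x$ maps. The point is that $x\in\m$ annihilates $\Tor^R_n(M/xM,N)=\Tor^R_n(k,N)\oplus\Tor^R_n(C,N)$ up to the ambient $\m$-power structure, and the hypothesis $\m^h\Tor^R_{\gg 0}(M,N)=0$ lets us pass between $\mu_R$ and $\lambda_R$ via \cite[Lem.~3.4]{DGS24}; more concretely, the connecting maps show $\Tor^R_n(M/xM,N)$ is sandwiched (in the sense of \ref{para:fact-on-exact-seq}) between $\Tor^R_n(M,N)$ and $\Tor^R_{n-1}(M,N)$ with lengths controlled by those of the $\Tor^R_*(M,N)$. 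Hence $\tcx_R(M/xM,N)\le\tcx_R(M,N)$ and $\tcurv_R(M/xM,N)\le\tcurv_R(M,N)$; and since $k$ is a summand of $M/xM$, \Cref{lem:cx-curv-direct-sum}.(1),(2) give $\tcx_R(k,N)\le\tcx_R(M/xM,N)$ and similarly for curvature. Finally $\tcx_R(k,N)=\cx_R(N)$ and $\tcurv_R(k,N)=\curv_R(N)$ by \Cref{rmk:cx-curv}. This proves the two inequalities of (1). When $R$ is complete intersection, \Cref{prop:AB00-II-DGS24-7.9}.(2) gives the reverse inequalities $\tcx_R(M,N)\le\cx_R(N)$ (note $\m^h\Tor^R_{\gg 0}(M,N)=0$ is assumed) and $\tcurv_R(M,N)\le 1$, while $\curv_R(N)\le\tcurv_R(M,N)$ and $\tcurv_R(M,N)\le 1$ force equality as well (using that $\curv_R(N)\ge 1$ whenever $\pd_R N=\infty$; if $\pd_R N<\infty$ both sides are $\le 1$ and everything collapses to the trivial cases via \Cref{rmk:cx-curv}.(3)). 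So (1) is complete.

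For (2), assume in addition $N$ is Burch. The implication (i)$\Rightarrow$(ii) is \Cref{prop:AB00-II-DGS24-7.9}.(2) together with $\cx_R(N)\le\cx_R(k)=\cx_R(R)$... more precisely $\tcx_R(M,N)\le\cx_R(N)$, which is finite (equal to $c$ on the nose when $R$ has codimension $c$, using $\cx_R(N)=\cx_R(k)=c$ since $N$ is Burch, by \Cref{prop:GS24-5.1} and \Cref{prop:properties-cx-curv}). The implication (ii)$\Rightarrow$(iii) is \Cref{rmk:cx-curv}.(3). For (iii)$\Rightarrow$(i): by part (1), $\curv_R(N)\le\tcurv_R(M,N)\le 1$; but $N$ is Burch, so $\curv_R(N)=\curv_R(k)$ by \Cref{prop:GS24-5.1}.(2); hence $\curv_R(k)\le 1$, and \Cref{prop:properties-cx-curv}.(2) yields that $R$ is complete intersection. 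The codimension refinement in (ii)$\Leftrightarrow$(i): if $\tcx_R(M,N)=c<\infty$ then $R$ is complete intersection by the chain just given applied via \Cref{rmk:cx-curv}.(3), and then part (1) (equality case) plus \Cref{prop:GS24-5.1}.(1) give $c=\tcx_R(M,N)=\cx_R(N)=\cx_R(k)$, which by \Cref{prop:properties-cx-curv}.(2) is exactly the codimension.

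The main obstacle I expect is the bookkeeping in the first step: verifying that the $\m$-power annihilator hypothesis is preserved when passing along the exact sequence $0\to M\xrightarrow{x}M\to M/xM\to 0$ (so that $\lambda_R(\Tor^R_n(M/xM,N))$ is finite and comparable to $\lambda_R(\Tor^R_n(M,N))$ for $n\gg 0$), and keeping the indices straight in the long exact sequence so the complexity/curvature comparisons go through cleanly. This is exactly the kind of estimate packaged in \ref{para:fact-on-exact-seq}, \cite[Lem.~3.4]{DGS24} and \Cref{lem:DGS-2.7-2.2}, so it should be routine once set up, but it is the technical heart of the argument; everything after that is a formal combination of \Cref{prop:GS24-5.1}, \Cref{prop:properties-cx-curv}, \Cref{prop:AB00-II-DGS24-7.9} and \Cref{rmk:cx-curv}.
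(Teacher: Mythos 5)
Your proposal is correct and takes essentially the same route as the paper: both split $k$ off $M/xM$ via \Cref{lem:DK23-3.6} and then combine \Cref{lem:cx-curv-direct-sum}, \Cref{prop:GS24-5.1}, \Cref{prop:properties-cx-curv}, \Cref{prop:AB00-II-DGS24-7.9} and \Cref{rmk:cx-curv}, the only difference being that the paper simply cites \cite[Lem.~3.12.(2)]{DGS24} for the step $\tcx_R(M/xM,N)\le \tcx_R(M,N)$, which you instead re-derive from the long exact sequence of Tor together with \ref{para:fact-on-exact-seq}. (The passing identification $\cx_R(k)=\cx_R(R)$ in your part (2) is of course false, but you immediately discard it in favour of the correct bound $\tcx_R(M,N)\le \cx_R(N)=\cx_R(k)$, so nothing is affected.)
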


\begin{proof}
    (1) By \Cref{lem:DK23-3.6}, there exists an $M$-regular element $x\in\m$ such that $k$ is a direct summand of $M/xM$. Therefore  $\cx_R(N)=\tcx_R(k,N)\le \tcx_R(M/xM,N)\le \tcx_R(M,N)$, where the last two inequalities are obtained from \Cref{lem:cx-curv-direct-sum}.(1) and \cite[Lem.~3.12.(2)]{DGS24} respectively. Similarly, one obtains $\curv_R(N) \le \tcurv_R(M,N)$. 
    
    Suppose that $R$ is complete intersection. Then, by \Cref{prop:AB00-II-DGS24-7.9}.(2), one has $\tcx_R(M,N)\le \min\{\cx_R(M), \cx_R(N)\} \le \cx_R(N)$. Hence $\cx_R(N)=\tcx_R(M,N)$. For equality on the curvatures, since $0\le \curv_R(N) \le \tcurv_R(M,N)$, there is nothing to prove when $\tcurv_R(M,N)=0$. So we may assume that $\tcurv_R(M,N) > 0$, equivalently, $\tcx_R(M,N)>0$, cf.~\Cref{rmk:cx-curv}.(4).(d). Note that $\tcx_R(M,N) = \cx_R(N) < \infty$ (by \Cref{prop:properties-cx-curv}). Hence, by \Cref{rmk:cx-curv}.(4), $0<\tcurv_R(M,N) \le 1$, i.e., $\tcurv_R(M,N)=1$. It remains to prove that $\curv_R(N) = 1$. Since $R$ is complete intersection, $\curv_R(N) \le 1$, i.e., $\curv_R(N)$ can be $0$ or $1$. Since $\cx_R(N)=\tcx_R(M,N)>0$, it follows that $\curv_R(N)=1$. Thus, $\curv_R(N) = \tcurv_R(M,N)$.

    (2) First we note that if $R$ is complete intersection, in view of \Cref{prop:properties-cx-curv}.(2),
    \begin{equation}\label{cdim.equal.tcx}
        \codim(R) = \cx_R(k) = \cx_R(N) = \tcx_R(M,N), 
    \end{equation}
    where the last two equalities are obtained from \Cref{prop:GS24-5.1}.(1) and (1) respectively. So, the implications (i) $\Rightarrow$ (ii) and (i) $\Rightarrow$ (iii) follow from \Cref{prop:AB00-II-DGS24-7.9}.(2),
    while the implication (ii) $\Rightarrow$ (iii) is trivial (\Cref{rmk:cx-curv}.(4).(b)). Assuming (iii), by \Cref{prop:GS24-5.1}.(2) and (1), $\curv_R(k)=\curv_R(N) \le \tcurv_R(M,N) \le 1$, and hence $R$ is complete intersection by \Cref{prop:properties-cx-curv}.(2). Thus, one also sees that if $\tcx_R(M,N) = c < \infty$, then $R$ is complete intersection of codimension $c$, see \eqref{cdim.equal.tcx}.
\end{proof}

For Burch modules of arbitrary depth, we prove the following result on Tor complexity and curvature.

\begin{theorem}\label{thm:tcx-tcurv-CI-ring}
    Let $L,M$ and $N$ be $R$-modules such that $M$ is a Burch submodule of $L$. Let $\m^h\Tor^R_{\gg 0}(M,N)=\m^h\Tor^R_{\gg 0}(L,N)=0$ for some $h\ge 1$. Then 
    \begin{enumerate}[\rm (1)]
        \item 
        $\cx_R(N) \le \max\{\tcx_R(M,N), \tcx_R(L/M,N) \}$.
        \item 
        $\curv_R(N) \le \max\{\tcurv_R(M,N), \tcurv_R(L/M,N) \}$.
        \item 
        Both inequalities {\rm (1)} and {\rm (2)} become equalities if $R$ is a complete intersection.
        \item 
        Suppose $N$ is also a Burch module. Then the following are equivalent.
        \begin{enumerate}[\rm (i)]
        \item $R$ is complete intersection. 
        \item $\max\{\tcx_R(M,N), \tcx_R(L/M,N) \} < \infty$. 
        \item $\max\{\tcurv_R(M,N), \tcurv_R(L/M,N) \} \le 1$.
        \end{enumerate}
        Under these conditions, $\codim(R) = \max\{\tcx_R(M,N), \tcx_R(L/M,N) \}$.
    \end{enumerate}
\end{theorem}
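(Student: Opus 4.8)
The plan is to reduce everything to the positive-depth case of \Cref{prop:tcx-tcurv-CI-ring} via the base change $S := R[X]_{\langle\m,X\rangle}$. Set $L' := L[X]_{\langle\m,X\rangle}$ and $M' := (M + XL[X])_{\langle\m,X\rangle}$, and regard $N$ as an $S$-module through $S/XS\cong R$. Since $X$ is a nonzerodivisor on $S$ and on $M'$ (the latter because $M'\subseteq L'$ and $X$ is $L[X]$-regular), we have $\depth_S(M')\ge 1$; moreover $M'$ is a Burch $S$-submodule of $L'$ and $M'/XM'\cong M\oplus L/M$ by \Cref{lem:GS24-lem-4.8}. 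I would first verify that the Tor hypothesis descends to $S$: from the exact sequence $0\to M\to L\to L/M\to 0$ and \Cref{lem:cx-curv-s.e.s}.(1), the assumptions $\m^h\Tor^R_{\gg0}(M,N)=0=\m^h\Tor^R_{\gg0}(L,N)$ force $\m^{2h}\Tor^R_{\gg0}(L/M,N)=0$; since $\Tor^S_n(M',N)\cong\Tor^R_n(M,N)\oplus\Tor^R_n(L/M,N)$ by \Cref{lem:Tor-Ext-iso-over-S-R}.(1) and this $S$-module is killed by $X$, the containment $\m_S^{2h}\subseteq\m^{2h}S+XS$ yields $\m_S^{2h}\Tor^S_{\gg0}(M',N)=0$.

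Now \Cref{prop:tcx-tcurv-CI-ring}.(1) applied over $S$ to the pair $(M',N)$ gives $\cx_S(N)\le\tcx_S(M',N)$ and $\curv_S(N)\le\tcurv_S(M',N)$, with equalities when $S$ is complete intersection. Translating through \Cref{lem:cx-curv-N}.(1), which gives $\cx_S(N)=\cx_R(N)$ and $\curv_S(N)=\curv_R(N)$, and through \Cref{lem:cx-curv-over-S-R}.(1),(2), which give $\tcx_S(M',N)=\max\{\tcx_R(M,N),\tcx_R(L/M,N)\}$ and the analogous identity for Tor-curvature, yields (1) and (2) at once. For (3), note that $S$ is complete intersection if and only if $R$ is, with $\codim(S)=\codim(R)$: indeed $\cx_S(k)=\cx_R(k)$ by \Cref{lem:cx-curv-N}.(1) (the residue fields coincide), so this is immediate from \Cref{prop:properties-cx-curv}.(2). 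Hence the equality clause of \Cref{prop:tcx-tcurv-CI-ring}.(1) over $S$ translates to the asserted equalities over $R$.

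For (4), assume in addition that $N$ is Burch, so $\cx_R(N)=\cx_R(k)$ and $\curv_R(N)=\curv_R(k)$ by \Cref{prop:GS24-5.1}. If $R$ is complete intersection, then by (3) together with \Cref{prop:properties-cx-curv}.(2) we get $\max\{\tcx_R(M,N),\tcx_R(L/M,N)\}=\cx_R(N)=\cx_R(k)=\codim(R)<\infty$, which gives (i)$\Rightarrow$(ii) as well as the final formula for $\codim(R)$. The implication (ii)$\Rightarrow$(iii) is immediate from \Cref{rmk:cx-curv}.(3).(b) applied to each of the two pairs. For (iii)$\Rightarrow$(i), inequality (2) gives $\curv_R(k)=\curv_R(N)\le\max\{\tcurv_R(M,N),\tcurv_R(L/M,N)\}\le 1$, whence $R$ is complete intersection by \Cref{prop:properties-cx-curv}.(2).

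The conceptual heart is already packaged in \Cref{lem:GS24-lem-4.8}: passing to $S$ forces the Burch module to acquire positive depth while splitting $M'/XM'$ as $M\oplus L/M$. Once that is in hand the argument is bookkeeping. The only points demanding a little care are confirming that the Tor-annihilation descends to $S$ with a single uniform exponent (handling the mixed powers of $\m$ and $X$ inside $\m_S$) and checking that the $R$-to-$S$ dictionary supplied by Lemmas~\ref{lem:cx-curv-N} and~\ref{lem:cx-curv-over-S-R} indeed covers every invariant appearing in the statement.
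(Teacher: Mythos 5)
Your proof is correct and follows essentially the same route as the paper: reduce to the positive-depth case of \Cref{prop:tcx-tcurv-CI-ring} via the base change to $S=R[X]_{\langle\m,X\rangle}$, transport the annihilation hypothesis using \Cref{lem:cx-curv-s.e.s} and \Cref{lem:Tor-Ext-iso-over-S-R}, and translate back with Lemmas~\ref{lem:cx-curv-N} and~\ref{lem:cx-curv-over-S-R}. The only (harmless) deviations are that you apply the base change uniformly rather than first dispatching the case $\depth(M)\ge 1$ directly, and you obtain part (3) from the equality clause of \Cref{prop:tcx-tcurv-CI-ring} over $S$ instead of invoking \Cref{prop:AB00-II-DGS24-7.9}.(2) over $R$.
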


\begin{proof}
    (1) When $\depth(M)\ge 1$, the inequality follows from \Cref{prop:tcx-tcurv-CI-ring}.(1). So, we may assume that $\depth(M)=0$. Consider $S := R[X]_{\langle \m,X\rangle}$, $L':= L[X]_{\langle \m,X\rangle}$ and $M' := (M + XL[X])_{\langle \m,X\rangle}$. We may view $N$ as an $S$-module via the isomorphism $S/XS\cong R$. Since $\m^h\Tor^R_{\gg 0}(M,N)=0$ and $\m^h\Tor^R_{\gg 0}(L,N)=0$, it follows from \Cref{lem:cx-curv-s.e.s}.(1) that $\m^{2h}\Tor^R_{\gg 0}(L/M,N)=0$. Hence, in view of \Cref{lem:Tor-Ext-iso-over-S-R}.(1), $\fn^{2h}\Tor^S_{\gg 0}(M',N)=0$, where $\fn$ is the maximal ideal of $S$. Note that $M'$ is a Burch $S$-submodule of $L'$ by \Cref{lem:GS24-lem-4.8}.(1). Therefore, since $\depth_S(M')\ge 1$, by \Cref{prop:tcx-tcurv-CI-ring}.(1), $\cx_S(N) \le \tcx_S(M',N)$. Due to \Cref{lem:cx-curv-N}.(1), one has $\cx_S(N) = \cx_R(N)$. Hence, by \Cref{lem:cx-curv-over-S-R}.(1), $\cx_R(N) \le \tcx_S(M',N) = \max\{ \tcx_R(M,N), \tcx_R(L/M,N) \}$. 
        
   (2) Replacing $\cx$ by $\curv$, the proof follows along the same lines as (1). 

   (3) Assume that $R$ is a complete intersection. Then, by \Cref{prop:AB00-II-DGS24-7.9}.(2), $\tcx_R(M,N) \le \min\{ \cx_R(M), \cx_R(N) \} \le \cx_R(N)$. Similarly, $\tcx_R(L/M,N) \le \cx_R(N)$. Thus,
   \[
    \max\{ \tcx_R(M,N), \tcx_R(L/M,N) \}\le \cx_R(N).
   \]
   Hence, by (1), the equality follows. Analogous argument shows the equality for curvature.

   (4) In view of \Cref{prop:GS24-5.1}, $\cx_R(N)=\cx_R(k)$ and $\curv_R(N) = \curv_R(k)$. So the desired equivalences follow from (1), (2), \Cref{prop:properties-cx-curv}.(2) and \Cref{prop:AB00-II-DGS24-7.9}.(2). Hence the last part can be obtained from (3) and \Cref{prop:properties-cx-curv}.(2). 
\end{proof}


The following is a counterpart of \Cref{prop:tcx-tcurv-CI-ring} for Ext complexity and curvature.

\begin{proposition}\label{prop:injcx-injcurv-CI-ring}
    Let $M$ and $N$ be $R$-modules such that $M$ is Burch. Let $\depth(M)\ge 1$, and $\mathfrak{m}^h\Ext^{\gg0}_R(M,N)=0$ for some $h\ge 1$. Then 
     \begin{enumerate}[\rm (1)]
        \item 
        $\injcx_R(N) \le \cx_R(M,N)$ and $\injcurv_R(N) \le \curv_R(M,N)$. When $R$ is a complete intersection, both inequalities become equalities.
        \item 
        Assume $N$ is also a Burch module. Then the following conditions are equivalent.
        \begin{enumerate}[\rm (i)]
        \item $R$ is complete intersection $($resp., of codimension $c$$)$.
        \item $\cx_R(M,N) < \infty$ $($resp., $\cx_R(M,N) = c < \infty$$)$.
        \item $\curv_R(M,N) \le 1$.
        \end{enumerate} 
    \end{enumerate}
\end{proposition}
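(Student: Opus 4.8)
The plan is to transcribe the proof of \Cref{prop:tcx-tcurv-CI-ring} almost verbatim, with $\Tor$ replaced by $\Ext$ and the single-module complexity $\cx_R(N)$ replaced by the injective complexity $\injcx_R(N)=\cx_R(k,N)$. The one genuinely new ingredient is the equality $\cx_R(N)=\injcx_R(N)$ over a complete intersection, which is part of \Cref{prop:AB00-II-DGS24-7.9}.(1); everything else is bookkeeping with the lemmas already assembled.

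\emph{Proof of (1).} Since $M$ is Burch with $\depth(M)\ge 1$, \Cref{lem:DK23-3.6} gives an $M$-regular $x\in\m$ and a splitting $M/xM\cong k\oplus M''$. By \Cref{rmk:cx-curv}.(1) and \Cref{lem:cx-curv-direct-sum}.(3)--(4), $\injcx_R(N)=\cx_R(k,N)\le\cx_R(M/xM,N)$ and $\injcurv_R(N)=\curv_R(k,N)\le\curv_R(M/xM,N)$. To finish I must show $\cx_R(M/xM,N)\le\cx_R(M,N)$ and $\curv_R(M/xM,N)\le\curv_R(M,N)$: applying $\Hom_R(-,N)$ to $0\to M\xrightarrow{x}M\to M/xM\to 0$ exhibits, for $n\gg 0$, each $\Ext^{n+1}_R(M/xM,N)$ as an extension of a submodule of $\Ext^{n+1}_R(M,N)$ by a quotient of $\Ext^{n}_R(M,N)$; the hypothesis $\m^h\Ext^{\gg 0}_R(M,N)=0$ makes all of these finite length (and shows $\m^{2h}\Ext^{\gg 0}_R(M/xM,N)=0$), so a length count together with \cite[Lem.~3.4]{DGS24} and \Cref{lem:DGS-2.7-2.2} gives the two inequalities. (Equivalently this is the $\Ext$-analogue of the input \cite[Lem.~3.12.(2)]{DGS24} used for Tor, and it is precisely here that the annihilation hypothesis is used.) If $R$ is a complete intersection, then \Cref{prop:AB00-II-DGS24-7.9}.(1) gives $\cx_R(M,N)\le\cx_R(N)=\injcx_R(N)$, so the inequality just proved becomes an equality $\injcx_R(N)=\cx_R(M,N)<\infty$. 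For curvature there is nothing to show if $\curv_R(M,N)=0$; otherwise $\cx_R(M,N)>0$ by \Cref{rmk:cx-curv}.(3).(c), hence $\curv_R(M,N)=1$ by \Cref{rmk:cx-curv}.(3).(a), while $\injcx_R(N)=\cx_R(M,N)>0$ forces $\mu^n_R(N)\ne 0$ for infinitely many $n$, so $1\le\injcurv_R(N)\le\injcurv_R(k)=\curv_R(k)\le 1$ by \Cref{prop:properties-cx-curv}; thus $\injcurv_R(N)=1=\curv_R(M,N)$.

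\emph{Proof of (2).} With $N$ also Burch, \Cref{prop:GS24-5.1} gives $\injcx_R(N)=\cx_R(k)$ and $\injcurv_R(N)=\curv_R(k)$. If $R$ is a complete intersection of codimension $c$, then by (1) and \Cref{prop:properties-cx-curv}.(2) we get $\cx_R(M,N)=\injcx_R(N)=\cx_R(k)=c<\infty$, which gives (i)$\Rightarrow$(ii); (ii)$\Rightarrow$(iii) is \Cref{rmk:cx-curv}.(3).(a); and if $\curv_R(M,N)\le 1$ then $\curv_R(k)=\injcurv_R(N)\le\curv_R(M,N)\le 1$, so $R$ is a complete intersection by \Cref{prop:properties-cx-curv}.(2), giving (iii)$\Rightarrow$(i). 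The refined ``codimension $c$'' statements are carried along the same chain of equalities $\cx_R(M,N)=\cx_R(k)=\codim(R)$.

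I do not foresee a real obstacle: the argument is a mechanical transcription of \Cref{prop:tcx-tcurv-CI-ring}. The only points needing attention are the curvature equality in (1) over a complete intersection, where one must bring in $\cx_R(N)=\injcx_R(N)$ (\Cref{prop:AB00-II-DGS24-7.9}.(1)), and the step $\cx_R(M/xM,N)\le\cx_R(M,N)$, which has to be run through lengths because the annihilation hypothesis a priori controls only the $\Ext$ modules of $M$ and not directly those of $M/xM$.
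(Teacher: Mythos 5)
Your proof is correct and follows essentially the same route as the paper's: pass to a summand $k$ of $M/xM$ via \Cref{lem:DK23-3.6}, compare with $\cx_R(M,N)$, and use \Cref{prop:AB00-II-DGS24-7.9}.(1) (including $\cx_R(N)=\injcx_R(N)$ over a complete intersection) together with \Cref{prop:GS24-5.1} for the equalities and the equivalences in (2). The only cosmetic difference is that the paper cites \cite[Lem.~3.13.(2)]{DGS24} for the step $\cx_R(M/xM,N)\le \cx_R(M,N)$, which you instead re-derive from the long exact sequence of $0\to M\xrightarrow{x}M\to M/xM\to 0$ and a length count under the annihilation hypothesis; both arguments are valid.
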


\begin{proof}
    (1) By \Cref{lem:DK23-3.6}, there exists an $M$-regular element $x$ of $R$ such that $k$ is a direct summand of $M/xM$. 
    Therefore  $\injcx_R(N)=\cx_R(k,N)\le \cx_R(M/xM,N)\le \cx_R(M,N)$, where the last two inequalities are obtained from \Cref{lem:cx-curv-direct-sum}.(3) and \cite[Lem.~3.13.(2)]{DGS24} respectively. Similarly, one can obtain $\injcurv_R(N) \le \curv_R(M,N)$. 
    
    When $R$ is complete intersection, due to \Cref{prop:AB00-II-DGS24-7.9}.(1), $\cx_R(M,N)\le \min\{\cx_R(M), \cx_R(N)\} \le \cx_R(N)=\injcx _R(N)$. Hence $\injcx _R(N) = \cx _R(M,N)$. For equality on the curvatures, since $0\le \injcurv_R(N) \le \curv_R(M,N)$, there is nothing to prove when $\curv_R(M,N)=0$. So we may assume that $\curv_R(M,N) > 0$, equivalently, $\cx_R(M,N)>0$. Therefore $\cx_R(M,N) = \injcx_R(N) < \infty$ by \Cref{prop:properties-cx-curv}(1).(i) and \ref{prop:properties-cx-curv}.(2). Hence, due to \Cref{rmk:cx-curv}.(4), $\curv_R(M,N) \le 1$. Thus $\curv_R(M,N)=1$. It remains to prove that $\injcurv_R(N) = 1$. Since $R$ is complete intersection, $\injcurv_R(N) =\curv_R(N) \le 1$, i.e., $\injcurv_R(N)$ can be $0$ or $1$. Since $\injcx_R(N)=\cx_R(M,N)>0$, it follows that $\injcurv_R(N)=1$.

    (2) When $R$ is complete intersection, in view of \Cref{prop:properties-cx-curv}.(2),
    \begin{equation}\label{cdim-equal-(inj)cx}
             \codim(R) = \cx_R(k) = \injcx_R(N) = \cx_R(M,N),
    \end{equation}
    where the last two equalities hold due to \Cref{prop:GS24-5.1}.(1) and
    (1) respectively. So, the implications (i) $\Rightarrow$ (ii) and (i) $\Rightarrow$ (iii) follow from \Cref{prop:AB00-II-DGS24-7.9}.(1), whereas (ii) $\Rightarrow$ (iii) is trivial (\Cref{rmk:cx-curv}.(4).(a)). Assuming (iii), by \Cref{prop:GS24-5.1}.(2) and (1) $\curv_R(k)=\curv_R(N) \le \curv_R(M,N) \le 1$. Hence $R$ is complete intersection by \Cref{prop:properties-cx-curv}.(2). Thus, one also sees that if $\cx_R(M,N) = c < \infty$, then $R$ is complete intersection of codimension $c$, see \eqref{cdim-equal-(inj)cx}.
\end{proof}

The following, which can be thought of as an Ext-counterpart to \Cref{thm:tcx-tcurv-CI-ring}, shows that when $M$ is a Burch submodule of $L$ of depth zero, in addition to $\mathfrak{m}^h\Ext^{\gg 0}_R(M,N)=0$, if $\mathfrak{m}^h\Ext^{\gg 0}_R(L,N)=0$ for some $h \ge 1$, then the following inequalities hold.

\begin{theorem}\label{thm:injcx-injcurv-CI-ring}
    Let $L,M$ and $N$ be $R$-modules such that $M$ is a Burch submodule of $L$. Suppose that $\mathfrak{m}^h\Ext^{\gg0}_R(M,N)=\mathfrak{m}^h\Ext_R^{\gg 0}(L,N)=0$ for some $h\ge 1$. Then 
    \begin{enumerate}[\rm (1)]
        \item 
        $\injcx_R(N) \le \max\{\cx_R(M,N), \cx_R(L/M,N) \}$.
        \item 
        $\injcurv_R(N) \le \max\{\curv_R(M,N), \curv_R(L/M,N) \}$.
        \item
        Both inequalities {\rm (1)} and {\rm (2)} become equalities if $R$ is a complete intersection.
        \item 
        Assume $N$ is also a Burch module. Then the following conditions are equivalent.
        \begin{enumerate}[\rm (i)]
        \item $R$ is complete intersection. 
        \item $\max\{\cx_R(M,N), \cx_R(L/M,N) \} < \infty$.
        \item $\max\{\curv_R(M,N), \curv_R(L/M,N) \} \le 1$.
    \end{enumerate}
    Under these conditions, $\codim(R) = \max\{\cx_R(M,N), \cx_R(L/M,N) \}$.
    \end{enumerate} 
\end{theorem}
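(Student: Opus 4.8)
The plan is to mirror the proof of \Cref{thm:tcx-tcurv-CI-ring}, replacing Tor by Ext throughout and invoking the Ext-versions of the auxiliary results. For \textbf{(1)}, first dispose of the case $\depth(M)\ge 1$: here the hypothesis $\m^h\Ext^{\gg0}_R(M,N)=0$ lets us apply \Cref{prop:injcx-injcurv-CI-ring}.(1) directly, giving $\injcx_R(N)\le \cx_R(M,N)\le \max\{\cx_R(M,N),\cx_R(L/M,N)\}$. So assume $\depth(M)=0$. Set $S:=R[X]_{\langle\m,X\rangle}$, $L':=L[X]_{\langle\m,X\rangle}$, $M':=(M+XL[X])_{\langle\m,X\rangle}$, and view $N$ as an $S$-module via $S/XS\cong R$. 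From $\m^h\Ext^{\gg0}_R(M,N)=\m^h\Ext^{\gg0}_R(L,N)=0$ and the short exact sequence $0\to M\to L\to L/M\to 0$, \Cref{lem:cx-curv-s.e.s}.(2) gives $\m^{2h}\Ext^{\gg0}_R(L/M,N)=0$; then \Cref{lem:Tor-Ext-iso-over-S-R}.(2) yields $\fn^{2h}\Ext^{\gg0}_S(M',N)=0$, where $\fn$ is the maximal ideal of $S$. By \Cref{lem:GS24-lem-4.8}.(1), $M'$ is a Burch $S$-submodule of $L'$, and $X$ being $M'$-regular gives $\depth_S(M')\ge 1$, so \Cref{prop:injcx-injcurv-CI-ring}.(1) applies: $\injcx_S(N)\le \cx_S(M',N)$. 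Finally $\injcx_S(N)=\injcx_R(N)$ by \Cref{lem:cx-curv-N}.(2), and $\cx_S(M',N)=\max\{\cx_R(M,N),\cx_R(L/M,N)\}$ by \Cref{lem:cx-curv-over-S-R}.(3); combining proves \textbf{(1)}. For \textbf{(2)}, the identical argument with $\cx$ replaced by $\curv$ (using the curvature parts of \Cref{prop:injcx-injcurv-CI-ring}.(1), \Cref{lem:cx-curv-N}.(2) and \Cref{lem:cx-curv-over-S-R}.(4)) gives the inequality.

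For \textbf{(3)}, suppose $R$ is a complete intersection. By \Cref{prop:AB00-II-DGS24-7.9}.(1), $\cx_R(M,N)\le\min\{\cx_R(M),\cx_R(N)\}\le\cx_R(N)=\injcx_R(N)$, and likewise $\cx_R(L/M,N)\le\injcx_R(N)$, so $\max\{\cx_R(M,N),\cx_R(L/M,N)\}\le\injcx_R(N)$; together with \textbf{(1)} this forces equality. The same reasoning with the curvature bound of \Cref{prop:AB00-II-DGS24-7.9}.(1) and \Cref{rmk:cx-curv}.(3) gives the equality in \textbf{(2)}.

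For \textbf{(4)}, when $N$ is Burch, \Cref{prop:GS24-5.1} gives $\injcx_R(N)=\cx_R(k)$ and $\injcurv_R(N)=\curv_R(k)$. The implications (i)$\Rightarrow$(ii) and (i)$\Rightarrow$(iii) follow from \Cref{prop:AB00-II-DGS24-7.9}.(1), which makes both $\cx_R(M,N)$ and $\cx_R(L/M,N)$ finite, hence their curvatures $\le 1$ by \Cref{rmk:cx-curv}.(3).(a); and (ii)$\Rightarrow$(iii) is immediate from \Cref{rmk:cx-curv}.(3).(a). For (iii)$\Rightarrow$(i): by \textbf{(2)}, $\curv_R(k)=\injcurv_R(N)\le\max\{\curv_R(M,N),\curv_R(L/M,N)\}\le 1$, so $R$ is a complete intersection by \Cref{prop:properties-cx-curv}.(2). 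Finally, under these conditions $\codim(R)=\cx_R(k)=\injcx_R(N)$ by \Cref{prop:properties-cx-curv}.(2) and \Cref{prop:GS24-5.1}.(1), which equals $\max\{\cx_R(M,N),\cx_R(L/M,N)\}$ by \textbf{(3)}.

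The main obstacle — indeed essentially the only nontrivial point — is the reduction to the positive-depth case: one must verify that all of the hypotheses (the two annihilation conditions $\m^h\Ext^{\gg0}_R(M,N)=0$, $\m^h\Ext^{\gg0}_R(L,N)=0$, and the Burch property of $M$ in $L$) transfer to the pair $(M',L')$ over $S$, at the cost of a possibly larger annihilator exponent, and that $\injcx$ and $\injcurv$ are unchanged under this base change. All of this is packaged in the lemmas of \Cref{Preliminaries}, so once they are in place the argument becomes a formal concatenation.
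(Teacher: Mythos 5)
Your proposal is correct and follows essentially the same route as the paper's own proof: the same reduction to the positive-depth case via $S=R[X]_{\langle\m,X\rangle}$ and $M'=(M+XL[X])_{\langle\m,X\rangle}$, the same use of \Cref{lem:cx-curv-s.e.s}.(2), \Cref{lem:Tor-Ext-iso-over-S-R}.(2), \Cref{lem:GS24-lem-4.8}.(1), \Cref{prop:injcx-injcurv-CI-ring}.(1), \Cref{lem:cx-curv-N}.(2) and \Cref{lem:cx-curv-over-S-R}.(3)--(4) for parts (1)--(2), and the same appeals to \Cref{prop:AB00-II-DGS24-7.9}.(1), \Cref{prop:GS24-5.1} and \Cref{prop:properties-cx-curv}.(2) for parts (3)--(4). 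No gaps.
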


\begin{proof}
    (1) When $\depth(M)\ge 1$, the inequality follows from \Cref{prop:injcx-injcurv-CI-ring}.(1). So assume that $\depth(M)=0$. Set $S$, $L'$ and $M'$ as defined in the proof of \Cref{thm:tcx-tcurv-CI-ring}.(1), and consider $N$ as an $S$-module via the isomorphism $S/XS\cong R$. Since $\m^h\Ext_R^{\gg 0}(M,N)=0$ and $\m^h\Ext_R^{\gg 0}(L,N)=0$, we obtain $\m^{2h}\Ext_R^{\gg 0}(L/M,N)=0$ by \Cref{lem:cx-curv-s.e.s}.(2). Hence, in view of \Cref{lem:Tor-Ext-iso-over-S-R}.(2), $\fn^{2h}\Ext_S^{\gg 0}(M',N)=0$, where $\fn$ is the maximal ideal of $S$. Note that $M'$ is a Burch $S$-submodule of $L'$ by \Cref{lem:GS24-lem-4.8}.(1). Therefore, since $\depth_S(M')\ge 1$, by \Cref{prop:injcx-injcurv-CI-ring}.(1), $\injcx_S(N) \le \cx_S(M',N)$. Due to \Cref{lem:cx-curv-N}.(2), $\injcx_S(N) = \injcx_R(N)$. Hence, by \Cref{lem:cx-curv-over-S-R}.(3), $\injcx_R(N) \le \cx_S(M',N) = \max\{ \cx_R(M,N), \cx_R(L/M,N) \}$.

    (2) Replacing $\cx$ by $\curv$, the proof follows along the same lines as (1). 

    (3) Assume that $R$ is complete intersection. Then, by \Cref{prop:AB00-II-DGS24-7.9}(1), we obtain $\cx_R(M,N) \le \min\{ \cx_R(M), \cx_R(N) \} \le \cx_R(N)=\injcx_R(N)$. Similarly, $\cx_R(L/M,N) \le \injcx_R(N)$. Thus $\max\{ \cx_R(M,N), \cx_R(L/M,N) \}\le \injcx_R(N)$. Hence, by (1), the equality follows. Analogous argument shows the equality for curvature.

    (4) In view of \Cref{prop:GS24-5.1}, we have $\injcx_R(N)=\cx_R(k)$ and $\injcurv_R(N) = \curv_R(k)$. So, the desired equivalences follow from (1), (2), \Cref{prop:properties-cx-curv}.(2) and \Cref{prop:AB00-II-DGS24-7.9}.(1). Hence, the last part can be obtained from (3) and \Cref{prop:properties-cx-curv}.(2). 
   %
\end{proof}

The following result is an analogue of \Cref{prop:injcx-injcurv-CI-ring}, where $N$ is assumed to be Burch in place of $M$.

\begin{proposition}\label{prop:cx-curv-CI-ring}
    Let $M$ and $N$ be $R$-modules such that $N$ is Burch. Let $\depth(N)\ge 1$, and $\mathfrak{m}^h\Ext_R^{\gg 0}(M,N)=0$ for some $h\ge 1$. Then 
    \begin{enumerate}[\rm (1)]
        \item 
        $\cx_R(M) \le \cx_R(M,N)$ and $\curv_R(M) \le \curv_R(M,N)$. When $R$ is complete intersection, both the inequalities become equalities.
        \item 
        Suppose $M$ is also a Burch module. Then the following are equivalent.
        \begin{enumerate}[\rm (i)]
        \item $R$ is complete intersection $($resp., of codimension $c$$)$.
        \item $\cx_R(M,N) < \infty$ $($resp., $\cx_R(M,N) = c < \infty$$)$.
        \item $\curv_R(M,N) \le 1$.
        \end{enumerate}
    \end{enumerate}
\end{proposition}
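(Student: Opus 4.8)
The plan is to follow the proof of \Cref{prop:injcx-injcurv-CI-ring} with the two arguments of $\Ext$ interchanged: there the Burch hypothesis was on $M$ and the target invariant was $\injcx_R(N)=\cx_R(k,N)$, whereas here $N$ is Burch and the target invariant is $\cx_R(M)=\cx_R(M,k)$. All the ingredients are already available: \Cref{lem:DK23-3.6} (for the Burch module $N$ of positive depth), \Cref{lem:cx-curv-direct-sum}.(5), \Cref{rmk:cx-curv}, \Cref{prop:AB00-II-DGS24-7.9}.(1), \Cref{prop:GS24-5.1}, and \Cref{prop:properties-cx-curv}.

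For part (1), since $N$ is Burch with $\depth(N)\ge 1$, \Cref{lem:DK23-3.6} supplies an $N$-regular element $x\in\m$ such that $k$ is a direct summand of $N/xN$. I would then assemble the chain
\[
\cx_R(M)=\cx_R(M,k)\le \cx_R(M,N/xN)\le \cx_R(M,N),
\]
where the equality is \Cref{rmk:cx-curv}.(1), the first inequality is \Cref{lem:cx-curv-direct-sum}.(5), and the last inequality is the second-variable analogue of the reduction used in \Cref{prop:injcx-injcurv-CI-ring}.(1) (justified below); the same chain with $\cx$ replaced by $\curv$ gives $\curv_R(M)\le\curv_R(M,N)$. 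When $R$ is complete intersection, \Cref{prop:AB00-II-DGS24-7.9}.(1) yields $\cx_R(M,N)\le\min\{\cx_R(M),\cx_R(N)\}\le\cx_R(M)$, so the first inequality becomes an equality; the curvature equality is then extracted exactly as in the proof of \Cref{prop:injcx-injcurv-CI-ring}.(1): it is trivial when $\curv_R(M,N)=0$, and otherwise $\cx_R(M,N)>0$ is finite, forcing $\curv_R(M,N)=1$ by \Cref{rmk:cx-curv}.(3), while $\cx_R(M)=\cx_R(M,N)>0$ together with $\curv_R(M)\le\curv_R(k)\le 1$ (from \Cref{prop:properties-cx-curv}) forces $\curv_R(M)=1$.

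For part (2), assume in addition that $M$ is Burch, so \Cref{prop:GS24-5.1} gives $\cx_R(M)=\cx_R(k)$ and $\curv_R(M)=\curv_R(k)$. I would prove the cycle (i)$\Rightarrow$(ii)$\Rightarrow$(iii)$\Rightarrow$(i): the implication (i)$\Rightarrow$(ii) is \Cref{prop:AB00-II-DGS24-7.9}.(1), which moreover — combined with part (1) and \Cref{prop:properties-cx-curv}.(2) — gives $\cx_R(M,N)=\cx_R(M)=\cx_R(k)=\codim(R)$; (ii)$\Rightarrow$(iii) is \Cref{rmk:cx-curv}.(3).(a); and (iii)$\Rightarrow$(i) because part (1) gives $\curv_R(k)=\curv_R(M)\le\curv_R(M,N)\le 1$, whence $R$ is complete intersection by \Cref{prop:properties-cx-curv}.(2), and once $R$ is known to be complete intersection the equality $\cx_R(M,N)=\cx_R(k)=\codim(R)$ just noted handles the codimension refinement.

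The only step not already available verbatim is the inequality $\cx_R(M,N/xN)\le\cx_R(M,N)$ (and its curvature analogue) for an element $x$ that is merely $N$-regular, not $R$-regular; I expect this to be the main (though modest) obstacle. I would obtain it from the long exact sequence in $\Ext_R(M,-)$ attached to $0\to N\xrightarrow{x}N\to N/xN\to 0$, which produces a three-term exact sequence $\Ext^n_R(M,N)\to\Ext^n_R(M,N/xN)\to\Ext^{n+1}_R(M,N)$. The hypothesis $\m^h\Ext^{\gg 0}_R(M,N)=0$ makes the outer terms of finite length for $n\gg0$, hence $\Ext^n_R(M,N/xN)$ of finite length with $\m^{2h}\Ext^{\gg 0}_R(M,N/xN)=0$; passing to lengths via \ref{para:fact-on-exact-seq}, applying \cite[Lem.~3.4]{DGS24} and \Cref{lem:DGS-2.7-2.2}, and using that an index shift preserves complexity and curvature then yields the bound. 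This is precisely where the standing annihilation hypothesis $\m^h\Ext^{\gg0}_R(M,N)=0$ enters, in parallel with the role of $\m^h\Tor^R_{\gg0}(M,N)=0$ in \Cref{prop:tcx-tcurv-CI-ring}.
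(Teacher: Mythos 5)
Your proposal is correct and follows essentially the same route as the paper: \Cref{lem:DK23-3.6} produces the $N$-regular element $x$ with $k$ a summand of $N/xN$, the chain $\cx_R(M)=\cx_R(M,k)\le\cx_R(M,N/xN)\le\cx_R(M,N)$ is assembled via \Cref{lem:cx-curv-direct-sum}.(5), and the complete intersection/codimension statements are handled exactly as in \Cref{prop:injcx-injcurv-CI-ring}. The only divergence is that the paper obtains the step $\cx_R(M,N/xN)\le\cx_R(M,N)$ by citing \cite[Lem.~3.13.(1)]{DGS24}, whereas you re-derive it from the long exact sequence attached to $0\to N\xrightarrow{x}N\to N/xN\to 0$ together with the annihilation hypothesis; your derivation is sound and mirrors the argument of \Cref{lem:cx-curv-s.e.s}.
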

\begin{proof}
    (1) By \Cref{lem:DK23-3.6}, there exists an $N$-regular element $x$ of $R$ such that $k$ is a direct summand of $N/xN$. Therefore $\cx_R(M)=\cx_R(M,k)\le \cx_R(M,N/xN)\le \cx_R(M,N)$, where the last two inequalities are obtained by using \Cref{lem:cx-curv-direct-sum}.(5) and \cite[Lem.~3.13(1)]{DGS24} respectively. The inequality for curvature can be obtained using similar arguments. 

    When $R$ is complete intersection, due to \Cref{prop:AB00-II-DGS24-7.9}.(1), $\cx_R(M,N)\le \min\{\cx_R(M), \cx_R(N)\} \le \cx_R(M)$. Hence $\cx _R(M) = \cx _R(M,N)$. For equality on the curvatures, since $0\le \curv_R(M) \le \curv_R(M,N)$, there is nothing to prove when $\curv_R(M,N)=0$. So we may assume that $\curv_R(M,N) > 0$, equivalently, $\cx_R(M,N)>0$. Therefore $\cx_R(M,N) = \cx_R(M) < \infty$ by \Cref{prop:properties-cx-curv}(1).(i) and \ref{prop:properties-cx-curv}.(2). Hence, due to \Cref{rmk:cx-curv}.(4), $\curv_R(M,N) \le 1$. Thus $\curv_R(M,N)=1$. It remains to prove that $\curv_R(M) = 1$. Since $R$ is complete intersection, $\curv_R(M) \le 1$, i.e., $\curv_R(M)$ can be $0$ or $1$. Since $\cx_R(M)=\cx_R(M,N)>0$, it follows that $\curv_R(M)=1$. 

    (2) When $R$ is complete intersection, in view of \Cref{prop:properties-cx-curv}.(2),
    \begin{equation}\label{cdim-equal-cx}
             \codim(R) = \cx_R(k) = \cx_R(M) = \cx_R(M,N),
    \end{equation}
    where the last two equalities hold due to \Cref{prop:GS24-5.1}.(1) and
    (1) respectively. So, the implications (i) $\Rightarrow$ (ii) and (i) $\Rightarrow$ (iii) follow from \Cref{prop:AB00-II-DGS24-7.9}.(1), whereas (ii) $\Rightarrow$ (iii) is trivial (\Cref{rmk:cx-curv}.(4).(a)). Now, assuming (iii), by \Cref{prop:GS24-5.1}.(2) and (1) $\curv_R(k)=\curv_R(M) \le \curv_R(M,N) \le 1$. Hence $R$ is complete intersection by \Cref{prop:properties-cx-curv}.(2). Thus, one also sees that if $\cx_R(M,N) = c < \infty$, then $R$ is complete intersection of codimension $c$, see \eqref{cdim-equal-cx}.
    %
\end{proof}

Under the assumptions that $N$ is a Burch submodule of $L$ and $\mathfrak{m}^h\Ext_R^{\gg 0}(M,N)= \m^h\Ext_R^{\gg 0}(M,L)=0$ for some $h\ge 1$, the following result is analogous to \Cref{thm:injcx-injcurv-CI-ring}.

\begin{theorem}\label{thm:cx-curv-CI-ring}
    Let $L,M$ and $N$ be $R$-modules such that $N$ is a Burch submodule of $L$. Let $\mathfrak{m}^h\Ext_R^{\gg 0}(M,N)= \m^h\Ext_R^{\gg 0}(M,L)=0$ for some $h\ge 1$. Then 
        \begin{enumerate}[\rm (1)]
        \item 
        $\cx_R(M) \le \max\{\cx_R(M,N), \cx_R(M,L/N) \}$.
        \item 
        $\curv_R(M) \le \max\{\curv_R(M,N), \curv_R(M,L/N) \}$.
        \item
        Both inequalities {\rm (1)} and {\rm (2)} become equalities if $R$ is a complete intersection.
        \item 
        Suppose $M$ is also a Burch module. Then the following are equivalent.
        \begin{enumerate}[\rm (i)]
        \item $R$ is complete intersection.
        \item $\max\{\cx_R(M,N), \cx_R(M,L/N) \} < \infty$.
        \item $\max\{\curv_R(M,N), \curv_R(M,L/N) \} \le 1$.
    \end{enumerate}
    Under these conditions, $\codim(R) = \max\{\cx_R(M,N), \cx_R(M,L/N) \}$.
    \end{enumerate}
\end{theorem}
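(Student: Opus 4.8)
The plan is to run exactly the same argument as in the proof of \Cref{thm:injcx-injcurv-CI-ring}, but with the two Ext-arguments interchanged: here it is $N$ that is Burch and sits in the covariant slot, so the reduction will invoke \Cref{prop:cx-curv-CI-ring} in place of \Cref{prop:injcx-injcurv-CI-ring}, and will use part (3) of \Cref{lem:Tor-Ext-iso-over-S-R} together with parts (5)--(6) of \Cref{lem:cx-curv-over-S-R} rather than part (2) and parts (3)--(4). The whole point is that $\cx_R(M,-)$ is additive on direct sums in the second variable and behaves well under the polynomial base change $R\rightsquigarrow R[X]_{\langle\m,X\rangle}$, so that the depth-zero case of a Burch submodule can be traded for a positive-depth Burch submodule over the bigger ring.

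For (1), I would first dispose of the case $\depth(N)\ge 1$: then \Cref{prop:cx-curv-CI-ring}.(1) gives $\cx_R(M)\le \cx_R(M,N)\le \max\{\cx_R(M,N),\cx_R(M,L/N)\}$ outright. For $\depth(N)=0$, set $S:=R[X]_{\langle\m,X\rangle}$, $L':=L[X]_{\langle\m,X\rangle}$ and $N':=(N+XL[X])_{\langle\m,X\rangle}$, and view $M$ as an $S$-module via $S/XS\cong R$. Applying \Cref{lem:cx-curv-s.e.s}.(3) to $0\to N\to L\to L/N\to 0$ (with $M$ fixed in the first slot) and the hypotheses $\m^h\Ext^{\gg0}_R(M,N)=\m^h\Ext^{\gg0}_R(M,L)=0$ gives $\m^{2h}\Ext^{\gg0}_R(M,L/N)=0$; then \Cref{lem:Tor-Ext-iso-over-S-R}.(3) yields $\fn^{2h}\Ext^{\gg0}_S(M,N')=0$, where $\fn$ is the maximal ideal of $S$. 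By \Cref{lem:GS24-lem-4.8}.(1), $N'$ is a Burch $S$-submodule of $L'$, and $\depth_S(N')\ge 1$ because $X$ is $N'$-regular; hence \Cref{prop:cx-curv-CI-ring}.(1) over $S$ gives $\cx_S(M)\le \cx_S(M,N')$. Now rewrite $\cx_S(M)=\cx_R(M)$ via \Cref{lem:cx-curv-N}.(1) and $\cx_S(M,N')=\max\{\cx_R(M,N),\cx_R(M,L/N)\}$ via \Cref{lem:cx-curv-over-S-R}.(5) to conclude. Part (2) is identical with $\cx$ replaced by $\curv$, using \Cref{lem:cx-curv-over-S-R}.(6) and the curvature half of \Cref{prop:cx-curv-CI-ring}.(1).

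For (3), if $R$ is complete intersection then \Cref{prop:AB00-II-DGS24-7.9}.(1) gives $\cx_R(M,N)\le \min\{\cx_R(M),\cx_R(N)\}\le\cx_R(M)$ and likewise $\cx_R(M,L/N)\le\cx_R(M)$, so $\max\{\cx_R(M,N),\cx_R(M,L/N)\}\le\cx_R(M)$; combined with (1) this forces equality, and the same argument works for curvature. For (4), when $M$ is also Burch, \Cref{prop:GS24-5.1} gives $\cx_R(M)=\cx_R(k)$ and $\curv_R(M)=\curv_R(k)$. Then (i)$\Rightarrow$(ii) is \Cref{prop:AB00-II-DGS24-7.9}.(1), (ii)$\Rightarrow$(iii) is \Cref{rmk:cx-curv}.(3).(a), and (iii)$\Rightarrow$(i) follows from (2) since $\curv_R(k)=\curv_R(M)\le\max\{\curv_R(M,N),\curv_R(M,L/N)\}\le 1$ forces $R$ complete intersection by \Cref{prop:properties-cx-curv}.(2). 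For the final equality, under these conditions \Cref{prop:properties-cx-curv}.(2), \Cref{prop:GS24-5.1}.(1) and (3) give $\codim(R)=\cx_R(k)=\cx_R(M)=\max\{\cx_R(M,N),\cx_R(M,L/N)\}$.

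I do not anticipate a serious obstacle: the statement is the exact mirror of \Cref{thm:injcx-injcurv-CI-ring}. The only point demanding care is bookkeeping the variance — because $\cx_R(M,N)\neq\cx_R(N,M)$ in general, one must apply \Cref{lem:Tor-Ext-iso-over-S-R}.(3) and \Cref{lem:cx-curv-over-S-R}.(5)--(6) (covariant slot) rather than their ``other-variable'' counterparts — and checking that the two annihilation hypotheses $\m^h\Ext^{\gg0}_R(M,N)=\m^h\Ext^{\gg0}_R(M,L)=0$ are precisely what is needed to push the argument through the reduction to positive depth.
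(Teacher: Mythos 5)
Your proposal is correct and follows essentially the same route as the paper's proof: the same reduction of the depth-zero case to positive depth via $S=R[X]_{\langle\m,X\rangle}$ and $N'=(N+XL[X])_{\langle\m,X\rangle}$, invoking \Cref{lem:cx-curv-s.e.s}.(3), \Cref{lem:Tor-Ext-iso-over-S-R}.(3), \Cref{lem:GS24-lem-4.8}.(1), \Cref{prop:cx-curv-CI-ring}.(1), \Cref{lem:cx-curv-N}.(1) and \Cref{lem:cx-curv-over-S-R}.(5)--(6), with parts (3) and (4) handled exactly as in the paper. Your variance bookkeeping (covariant slot throughout) is precisely the point the paper's argument also relies on.
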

\begin{proof}
    (1) When $\depth(N)\ge 1$, the inequality follows from \Cref{prop:cx-curv-CI-ring}.(1). Therefore, assume that $\depth(N)=0$. Consider $S:=R[X]_{\langle \m,X\rangle}$, $L':= L[X]_{\langle \m,X\rangle}$ and $N':=(N+XL[X])_{\langle \m,X\rangle}$. We may view $M$ as $S$-module via the isomorphism $S/XS\cong R$. Since $\m^h\Ext_R^{\gg 0}(M,N)=0$ and $\m^h\Ext_R^{\gg 0}(M,L)=0$, it follows from \Cref{lem:cx-curv-s.e.s}.(3) that $\m^{2h}\Ext_R^{\gg 0}(M,L/N)=0$. Therefore, by \Cref{lem:Tor-Ext-iso-over-S-R}(3), we get $\fn^{2h}\Ext_R^{\gg 0}(M,N')=0$, where $\fn$ is the maximal ideal of $S$. Note that $N'$ is a Burch $S$-submodule of $L'$ by \Cref{lem:GS24-lem-4.8}.(1). Therefore, since $\depth_S(N')\ge 1$, by \Cref{prop:cx-curv-CI-ring}.(1), $\cx_S(M) \le \cx_S(M,N')$. Due to \Cref{lem:cx-curv-N}.(1), $\cx_S(M) = \cx_R(M)$. Hence, by \Cref{lem:cx-curv-over-S-R}.(5), $\cx_R(M) \le \cx_S(M,N') = \max\{ \cx_R(M,N), \cx_R(M,L/N) \}$. 

    (2) Replacing $\cx$ by $\curv$, the proof follows along the same lines as (1).

    (3) Now, let $R$ be complete intersection. From \Cref{prop:AB00-II-DGS24-7.9}(1), $\cx_R(M,N) \le \min\{ \cx_R(M), \cx_R(N) \} \le \cx_R(M)$. Similarly, $\cx_R(M,L/N) \le \cx_R(M)$. Thus
    \[
        \max\{\cx_R(M,N), \cx_R(M,L/N)\} \le \cx_R(M).
    \]
    Hence, by (1), the equality follows. The analogous argument shows the equality for curvature.

    (4) In view of \Cref{prop:GS24-5.1}, we have $\cx_R(M)=\cx_R(k)$ and $\curv_R(M) = \curv_R(k)$. So the desired equivalences follow from (1), (2), \Cref{prop:properties-cx-curv}.(2) and \Cref{prop:AB00-II-DGS24-7.9}.(1). Hence the last part can be obtained using (3) and \Cref{prop:properties-cx-curv}.(2).
    %
    %
\end{proof}

The next three propositions show that, under certain conditions, when one of the modules in the pair $(M,N)$ is Burch, the (injective) complexity (resp., curvature) of the other module is bounded above by the  Ext or Tor complexity (resp., curvature) of the pair.

\begin{proposition}\label{prop:tcx-le}
    Let $L,M$ and $N$ be $R$-modules such that $M$ is a Burch submodule of $L$. Suppose that $\m^h\Tor_{\gg 0}^R(L,N)=0$ and $\m^h\Tor_{\gg 0}^R(M,N)=0$ for some $h>0$.
    \begin{enumerate}[\rm (1)]
        \item 
        If $\tcx_R(L,N)\le \tcx_R(M,N)$, then $\cx_R(N)\le \tcx_R(M,N)$.
        \item 
        If $\tcurv_R(L,N)\le \tcurv_R(M,N)$, then $\curv_R(N)\le \tcurv_R(M,N)$.
    \end{enumerate}
\end{proposition}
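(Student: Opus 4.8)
The plan is to deduce both statements from \Cref{thm:tcx-tcurv-CI-ring} together with the short exact sequence $0\to M\to L\to L/M\to 0$. First observe that the standing hypotheses $\m^h\Tor^R_{\gg 0}(M,N)=\m^h\Tor^R_{\gg 0}(L,N)=0$ are exactly those required by \Cref{thm:tcx-tcurv-CI-ring}; hence parts (1) and (2) of that theorem give
\[
\cx_R(N)\le \max\{\tcx_R(M,N),\,\tcx_R(L/M,N)\}
\qquad\text{and}\qquad
\curv_R(N)\le \max\{\tcurv_R(M,N),\,\tcurv_R(L/M,N)\}.
\]
So it remains only to bound the terms involving $L/M$ by the corresponding terms involving $M$.

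For this I would apply \Cref{lem:cx-curv-s.e.s}.(1) to the short exact sequence $0\to M\to L\to L/M\to 0$, taking $s=1$, $t=2$, $u=3$ in the notation there (so that $M_s=M$, $M_t=L$ and $M_u=L/M$). Since $\m^h\Tor^R_{\gg 0}(M,N)=0$ and $\m^h\Tor^R_{\gg 0}(L,N)=0$, the lemma simultaneously yields $\m^{2h}\Tor^R_{\gg 0}(L/M,N)=0$ and the inequalities
\[
\tcx_R(L/M,N)\le \sup\{\tcx_R(M,N),\,\tcx_R(L,N)\},\qquad
\tcurv_R(L/M,N)\le \sup\{\tcurv_R(M,N),\,\tcurv_R(L,N)\}.
\]
Now I would feed in the hypotheses of the proposition. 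In case (1), $\tcx_R(L,N)\le\tcx_R(M,N)$ forces $\tcx_R(L/M,N)\le\tcx_R(M,N)$, so the maximum in the first display collapses to $\tcx_R(M,N)$, giving $\cx_R(N)\le\tcx_R(M,N)$. In case (2), the same reasoning with curvatures gives $\tcurv_R(L/M,N)\le\tcurv_R(M,N)$, hence $\curv_R(N)\le\tcurv_R(M,N)$.

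There is no serious obstacle here; the argument is a short combination of the two cited results. The only thing that needs attention is the bookkeeping of the uniform annihilators: both \Cref{thm:tcx-tcurv-CI-ring} and \Cref{lem:cx-curv-s.e.s} require some fixed power of $\m$ to kill the relevant $\Tor^R_{\gg 0}$ modules, and these are precisely the data in the statement --- with the $\m^{2h}$-annihilation of $\Tor^R_{\gg 0}(L/M,N)$ produced for free by \Cref{lem:cx-curv-s.e.s}.(1), so no extra hypothesis on $L/M$ is needed.
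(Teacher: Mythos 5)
Your argument is correct and is essentially identical to the paper's proof: both apply \Cref{lem:cx-curv-s.e.s}.(1) to the short exact sequence $0\to M\to L\to L/M\to 0$ to bound $\tcx_R(L/M,N)$ (resp., $\tcurv_R(L/M,N)$) by $\tcx_R(M,N)$ (resp., $\tcurv_R(M,N)$) under the stated hypotheses, and then invoke \Cref{thm:tcx-tcurv-CI-ring}.(1) and (2). Your extra remark that the $\m^{2h}$-annihilation of $\Tor^R_{\gg 0}(L/M,N)$ comes for free from the lemma is accurate bookkeeping, matching what the paper leaves implicit.
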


\begin{proof}
    In view of \Cref{lem:cx-curv-s.e.s}.(1), the short exact sequence $0\to M\to L\to L/M\to 0$ yields $\tcx_R(L/M,N)\le \sup\{\tcx_R(M,N),\tcx_R(L,N)\}=\tcx_R(M,N)$ and $\tcurv_R(L/M,N)\le \sup\{\tcurv_R(M,N),\tcurv_R(L,N)\}=\tcurv_R(M,N)$. Hence, we are done by \Cref{thm:tcx-tcurv-CI-ring}.(1) and \Cref{thm:tcx-tcurv-CI-ring}.(2).
\end{proof}

\begin{proposition}\label{prop:cx-le}
    Let $L,M$ and $N$ be $R$-modules such that $M$ is a Burch submodule of $L$. Suppose that $\m^h\Ext_R^{\gg 0}(L,N)=0$ and $\m^h\Ext_R^{\gg 0}(M,N)=0$ for some $h>0$.
    \begin{enumerate}[\rm (1)]
        \item 
        If $\cx_R(L,N)\le \cx_R(M,N)$, then $\injcx_R(N)\le \cx_R(M,N)$.
        \item 
        If $\curv_R(L,N)\le \curv_R(M,N)$, then $\injcurv_R(N)\le \curv_R(M,N)$.
    \end{enumerate}
\end{proposition}

\begin{proof}
     The proof is similar to \Cref{prop:tcx-le} using \Cref{lem:cx-curv-s.e.s}.(2) and \Cref{thm:injcx-injcurv-CI-ring} in places of \Cref{lem:cx-curv-s.e.s}.(1) and \Cref{thm:tcx-tcurv-CI-ring} respectively.
\end{proof}

\begin{proposition}\label{prop:cx-le-2}
    Let $L,M$ and $N$ be $R$-modules such that $N$ is a Burch submodule of $L$. Suppose that $\m^h\Ext_R^{\gg 0}(M,L)=0$ and $\m^h\Ext_R^{\gg 0}(M,N)=0$ for some $h>0$.
    \begin{enumerate}[\rm (1)]
        \item 
        If $\cx_R(M,L)\le \cx_R(M,N)$, then $\cx_R(M)\le \cx_R(M,N)$.
        \item 
        If $\curv_R(M,L)\le \curv_R(M,N)$, then $\curv_R(M)\le \curv_R(M,N)$.
    \end{enumerate}
\end{proposition}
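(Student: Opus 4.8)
The plan is to mimic the proofs of \Cref{prop:tcx-le} and \Cref{prop:cx-le}, now exploiting the short exact sequence $0\to N\to L\to L/N\to 0$ and the fact that $N$ (rather than $M$) is the Burch submodule. First I would feed this sequence into \Cref{lem:cx-curv-s.e.s}.(3), taking the fixed module there to be $M$ and letting $(M_1,M_2,M_3)=(N,L,L/N)$ with $(s,t,u)=(2,1,3)$. The hypotheses $\m^h\Ext_R^{\gg 0}(M,L)=0$ and $\m^h\Ext_R^{\gg 0}(M,N)=0$ then give $\m^{2h}\Ext_R^{\gg 0}(M,L/N)=0$, together with
\[
\cx_R(M,L/N)\le \sup\{\cx_R(M,N),\cx_R(M,L)\}
\quad\text{and}\quad
\curv_R(M,L/N)\le \sup\{\curv_R(M,N),\curv_R(M,L)\}.
\]

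Next, under the hypothesis of part (1), namely $\cx_R(M,L)\le \cx_R(M,N)$, the displayed inequality collapses to $\cx_R(M,L/N)\le \cx_R(M,N)$, so that $\max\{\cx_R(M,N),\cx_R(M,L/N)\}=\cx_R(M,N)$. Since $N$ is a Burch submodule of $L$ and the vanishing conditions $\m^{2h}\Ext_R^{\gg 0}(M,N)=\m^{2h}\Ext_R^{\gg 0}(M,L)=0$ hold (enlarging $h$ to $2h$ does no harm), \Cref{thm:cx-curv-CI-ring}.(1) applies and yields $\cx_R(M)\le \max\{\cx_R(M,N),\cx_R(M,L/N)\}=\cx_R(M,N)$, which is exactly (1). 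Part (2) is obtained in the same way by replacing $\cx$ with $\curv$ throughout and invoking \Cref{thm:cx-curv-CI-ring}.(2) in place of \Cref{thm:cx-curv-CI-ring}.(1).

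I do not anticipate a genuine obstacle here: the statement is the third member of a triad of essentially formal consequences of \Cref{thm:tcx-tcurv-CI-ring}, \Cref{thm:injcx-injcurv-CI-ring}, \Cref{thm:cx-curv-CI-ring} combined with the appropriate part of \Cref{lem:cx-curv-s.e.s}. The only points requiring a moment's care are bookkeeping ones: selecting part (3) of \Cref{lem:cx-curv-s.e.s} (since the Burch module sits in the second argument of $\Ext$), correctly identifying $L/N$ as the quotient term $M_u$ in that lemma, and keeping track of the annihilator exponent ($2h$ rather than $h$) so that the hypotheses of \Cref{thm:cx-curv-CI-ring} are literally met.
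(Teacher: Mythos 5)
Your proof is correct and is essentially identical to the paper's own argument: both feed the sequence $0\to N\to L\to L/N\to 0$ into \Cref{lem:cx-curv-s.e.s}.(3) to bound $\cx_R(M,L/N)$ and $\curv_R(M,L/N)$ by $\cx_R(M,N)$ and $\curv_R(M,N)$, and then conclude via \Cref{thm:cx-curv-CI-ring}.(1) and (2). (A minor remark: the enlargement of $h$ to $2h$ is not even needed to invoke \Cref{thm:cx-curv-CI-ring}, since its hypotheses coincide with those of the proposition.)
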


\begin{proof}
    In view of \Cref{lem:cx-curv-s.e.s}.(3), the short exact sequence $0\to N\to L\to L/N\to 0$ yields $\cx_R(M,L/N)\le \sup\{\cx_R(M,N),\cx_R(M,L)\}=\cx_R(M,N)$ and $\curv_R(M,L/N)\le \sup\{\curv_R(M,N),\curv_R(M,L)\}=\curv_R(M,N)$. Hence, we are done by \Cref{thm:cx-curv-CI-ring}.(1) and \Cref{thm:cx-curv-CI-ring}.(2).
\end{proof}

The next result shows that if the base ring is CM, 
$N$ is Burch, and some power of the maximal ideal annihilates $\Ext_R^{\gg 0}(M,N)$, then the complexity (resp., curvature) of $M$ is at most that of the pair $(M,N)$.

\begin{theorem}\label{thm:R-CM}
    Assume that $R$ is CM. Let $M$ and $N$ be $R$-modules such that $N$ is Burch, and $\m^h \Ext_R^{\gg 0}(M,N)=0$ for some $h>0$. Then $\cx_R(M)\le \cx_R(M,N)$ and $\curv_R(M)\le \curv_R(M,N)$.   
\end{theorem}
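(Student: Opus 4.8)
The plan is to replace $N$ by a Burch module of \emph{positive} depth having the same asymptotic $\Ext$ against $M$, and then quote \Cref{prop:cx-curv-CI-ring}.

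\smallskip
\noindent\emph{Step 1: reduction to the complete case.} Since completion is faithfully flat and local, $\beta_n^R(M)=\beta_n^{\widehat R}(\widehat M)$, $\curv_R(M)=\curv_{\widehat R}(\widehat M)$, $\mu_R(\Ext^n_R(M,N))=\mu_{\widehat R}(\Ext^n_{\widehat R}(\widehat M,\widehat N))$ for all $n$, and $\m^h\Ext^{\gg 0}_R(M,N)=0$ forces $\widehat\m^{\,h}\Ext^{\gg 0}_{\widehat R}(\widehat M,\widehat N)=0$; moreover $\widehat R$ is CM and $\widehat N$ is Burch by \Cref{burchcomplete}. Hence we may assume $R$ is complete, so $R$ has a canonical module $\omega$. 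The case $\dim R=0$ is easy: then $\m$ is nilpotent (so the $\Ext$-annihilation hypothesis is automatic) and $\omega=E_R(k)$ is finitely generated injective; realizing $N$ as a Burch submodule of a finitely generated module $W$ and embedding $W\hookrightarrow\omega^{a}$, \Cref{lem:Burch-comp} makes $N$ a Burch submodule of $\omega^{a}$, and \Cref{thm:cx-curv-CI-ring} with $L=\omega^{a}$ finishes the proof, because $\Ext^{\gg 0}_R(M,\omega^{a})=0$ gives $\cx_R(M,\omega^{a}/N)=\cx_R(M,N)$ and $\curv_R(M,\omega^{a}/N)=\curv_R(M,N)$. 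So assume $\dim R\ge 1$.

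\smallskip
\noindent\emph{Step 2: a Burch module of positive depth.} Up to isomorphism, write $N$ as a Burch submodule of a finitely generated $R$-module $W$. Choose a maximal Cohen--Macaulay approximation $0\to Y\to X\xrightarrow{\rho}W\to 0$ with $X$ maximal Cohen--Macaulay and $\injdim_R Y<\infty$ (Auslander--Buchweitz \cite{Aus-buch}), and set $P:=\rho^{-1}(N)\subseteq X$, so that $0\to Y\to P\to N\to 0$ is exact. Then $P$ is a Burch submodule of $X$: one checks $(P:_X\m)=\rho^{-1}\!\big((N:_W\m)\big)$, whence $\rho\big(\m(P:_X\m)\big)=\m(N:_W\m)$ while $\rho(\m P)=\m N$; since $\rho$ is surjective and $\m(N:_W\m)\not\subseteq\m N$, this forces $\m(P:_X\m)\not\subseteq\m P$.

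\smallskip
\noindent\emph{Step 3: conclusion.} As $X$ is maximal Cohen--Macaulay and $\dim R\ge 1$, $X$ has positive depth and hence zero socle, so its submodule $P$ has zero socle and $\depth_R P\ge 1$. Applying $\Hom_R(M,-)$ to $0\to Y\to P\to N\to 0$ and using $\injdim_R Y<\infty$ gives $\Ext^n_R(M,P)\cong\Ext^n_R(M,N)$ for $n\gg 0$; hence $\cx_R(M,P)=\cx_R(M,N)$, $\curv_R(M,P)=\curv_R(M,N)$, and $\m^h\Ext^{\gg 0}_R(M,P)=0$. Thus $P$ is a Burch module of positive depth satisfying the hypotheses of \Cref{prop:cx-curv-CI-ring}, and part (1) of that proposition yields $\cx_R(M)\le\cx_R(M,P)=\cx_R(M,N)$ and $\curv_R(M)\le\curv_R(M,P)=\curv_R(M,N)$.

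\smallskip
The crux is Step 2: because $\depth N$ may be $0$, the module $N$ cannot sit inside any module of positive depth, so one cannot feed $N$ directly into \Cref{prop:cx-curv-CI-ring}. The device is that $P$ only \emph{surjects} onto $N$ while lying inside the maximal Cohen--Macaulay module $X$, and the Burch condition is inherited along the surjection $\rho$ by the same compatibility of $(-:_{-}\m)$ with $\rho$ that underlies \Cref{lem:Burch-comp}. This is the only place where the Cohen--Macaulay (and completeness) hypothesis on $R$ is used, namely to guarantee the existence of maximal Cohen--Macaulay approximations.
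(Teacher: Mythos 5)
Your proof is correct, but in the main case it takes a genuinely different route from the paper's. The paper also reduces to the complete case, but then embeds the ambient module $X\supseteq N$ into a finitely generated module $L$ of finite injective dimension, uses \Cref{lem:Burch-comp} to make $N$ a Burch submodule of $L$, and concludes via \Cref{prop:cx-le-2} (which routes through \Cref{thm:cx-curv-CI-ring}, and hence through the $R[X]_{\langle\m,X\rangle}$ trick that handles depth zero); your dimension-zero case is essentially this argument with $L=\omega^a$. For $\dim R\ge 1$ you instead pull $N$ back along the maximal Cohen--Macaulay approximation $X\twoheadrightarrow W$ to a positive-depth module $P$ with the same tail of $\Ext^\ast_R(M,-)$, and feed $P$ directly into \Cref{prop:cx-curv-CI-ring}. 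The key new ingredient -- that the preimage of a Burch submodule under a surjection is again Burch, via $(P:_X\m)=\rho^{-1}\big((N:_W\m)\big)$ -- is not in the paper (its \Cref{lem:Burch-comp} is the dual statement for injections) and I verified it is correct; it gives a clean way to trade a depth-zero Burch module for a positive-depth one up to bounded $\Ext$, which could be useful elsewhere. What the paper's route buys is uniformity (no case split on $\dim R$) and brevity, since the depth-zero difficulty is absorbed once and for all into \Cref{thm:cx-curv-CI-ring}; what yours buys is a direct appeal to the positive-depth proposition in the main case, at the cost of a separate Artinian case and a new (though easy) lemma.
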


\begin{proof}
    All hypotheses and conclusions are preserved under passing to completion (for the Burch property, see \Cref{burchcomplete}). Hence, we may assume that $R$ is complete. Let $N$ be a Burch submodule of an $R$-module $X$. Since $R$ is complete and CM, there exist a finitely generated $R$-module $L$ of finite injective dimension and an injective homomorphism $h: X \to L$ (see \cite[11.8 and 11.17]{lw}). By \Cref{lem:Burch-comp}, $h(N)$ is a Burch submodule of $L$. Note that $\Ext^{\gg 0}_R(M,L)=0$ (i.e., $\cx_R(M,L)=0$), and $h(N)\cong N$. Therefore, by \Cref{prop:cx-le-2}, $\cx_R(M)\le \cx_R(M,h(N))=\cx_R(M,N)$, and similarly $\curv_R(M)\le \curv_R(M,N)$.
\end{proof}

Before we proceed to the next result, we require the following preparatory lemma.

\begin{lemma}\label{lem:MCM-appr-Ext-iso}
    Suppose that $R$ is CM of dimension $d$, and it admits a canonical module. Let $M$ and $N$ be $R$-modules such that $N$ locally has finite injective dimension on the punctured spectrum. Then, there exist MCM $R$-modules $X$ and $Y$ such that $Y$ locally has finite injective dimension on punctured spectrum, and $\Ext^{n+d}_R(M,N)\cong \Ext_R^n(X,Y)$ for all $n\ge 1$.
\end{lemma}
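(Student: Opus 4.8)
The plan is to carry out two independent replacements: trade $M$ for a maximal Cohen--Macaulay (MCM) syzygy, and trade $N$ for an MCM approximation.

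First I would set $X:=\syz^d_R(M)$ (replacing it by $X\oplus R$ if one wants $X$ to be nonzero, which changes nothing in positive cohomological degrees). Since $R$ is CM of dimension $d$, repeated application of the depth lemma to the syzygy sequences $0\to\syz^{i+1}_R(M)\to R^{b_i}\to\syz^i_R(M)\to 0$ gives $\depth_R X\ge\min\{\depth_R M+d,\,d\}=d$, so $X$ is MCM; and dimension shifting along these same sequences yields $\Ext^{n+d}_R(M,N)\cong\Ext^n_R(X,N)$ for all $n\ge 1$. It remains to produce an MCM module $Y$, locally of finite injective dimension on the punctured spectrum, with $\Ext^n_R(X,N)\cong\Ext^n_R(X,Y)$ for all $n\ge 1$.

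For this I would take a maximal Cohen--Macaulay approximation of $N$: because $R$ is CM with a canonical module $\omega$, the Auslander--Buchweitz theory \cite{Aus-buch} furnishes a short exact sequence $0\to W\to Y\to N\to 0$ with $Y$ MCM and $\injdim_R W<\infty$. Localizing at any $\fp\in\Spec R\setminus\{\m\}$ gives $0\to W_\fp\to Y_\fp\to N_\fp\to 0$, in which $N_\fp$ has finite injective dimension over $R_\fp$ by hypothesis and $W_\fp$ has finite injective dimension because $W$ does globally; hence $\injdim_{R_\fp}Y_\fp<\infty$, so $Y$ has the required punctured-spectrum property. Applying $\Hom_R(X,-)$ to the sequence, the desired isomorphism $\Ext^n_R(X,N)\cong\Ext^n_R(X,Y)$ for $n\ge 1$ drops out of the long exact sequence once one knows that $\Ext^n_R(X,W)=0$ for all $n\ge 1$.

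The crux is therefore this vanishing: an MCM module has no higher Ext into a module of finite injective dimension. I would argue it as follows. Since $X$ is MCM, $\Ext^i_R(X,\omega)=0$ for all $i>0$. Since $W$ has finite injective dimension over the CM ring $R$ with canonical module $\omega$, the standard equivalence between finitely generated modules of finite projective dimension and those of finite injective dimension (via $\omega\otimes_R-$ and $\Hom_R(\omega,-)$) produces a finite resolution $0\to\omega^{r_s}\to\cdots\to\omega^{r_0}\to W\to 0$ by finite direct sums of $\omega$, obtained by tensoring a finite free resolution of $\Hom_R(\omega,W)$ with $\omega$. Splitting this resolution into short exact sequences and dimension shifting, using $\Ext^{\ge 1}_R(X,\omega^{r_i})=0$ at each stage, forces $\Ext^n_R(X,W)=0$ for all $n\ge 1$. (This vanishing is in any case a standard feature of Auslander--Buchweitz theory, the modules of finite injective dimension being the relative injectives for the MCM subcategory cogenerated by $\omega$.) Combining the two reductions gives $\Ext^{n+d}_R(M,N)\cong\Ext^n_R(X,N)\cong\Ext^n_R(X,Y)$ for all $n\ge 1$, with $X,Y$ MCM and $Y$ locally of finite injective dimension on the punctured spectrum. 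I expect this last vanishing to be the only real obstacle; the syzygy reduction and the existence and localization behaviour of the MCM approximation are routine, and the degenerate cases $\syz^d_R(M)=0$ and $N=0$ are handled by taking $X=R$ and $Y=\omega$.
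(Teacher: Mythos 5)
Your proof is correct and follows essentially the same route as the paper's: replace $M$ by its $d$th syzygy, take an MCM approximation $0\to W\to Y\to N\to 0$ of $N$, and use the vanishing $\Ext^{\ge 1}_R(X,W)=0$ for $X$ MCM and $\injdim_R W<\infty$. You simply supply more detail (the $\omega$-resolution argument for that vanishing and the localization check for $Y$) than the paper, which asserts these steps directly.
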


\begin{proof}
    Set $X:=\syz^d_R(M)$. Then $X$ is MCM, and $\Ext^{n+d}_R(M,N) \cong \Ext_R^n(X,N)$ for all $n\ge 1$. Consider an MCM approximation (cf.~\cite[11.8 and 11.17]{lw}) of $N$, i.e., a short exact sequence $0\to L \to Y \to N\to 0$, where $L$ has finite injective dimension, and $Y$ is MCM. Hence, the desired isomorphisms follow because $\Ext_R^n(X,L)=0$ for all $n\ge 1$. Note that $Y$ locally has finite injective dimension on punctured spectrum as both $L$ and $N$ have the same property.
\end{proof}

When the completion $\widehat N$ locally has finite injective dimension on the punctured spectrum of $\widehat R$, we establish a result similar to \Cref{thm:R-CM}.

\begin{proposition}\label{gornew}
    Suppose that $R$ is CM. Let $M$ and $N$ be $R$-modules such that $N$ is Burch. If $\widehat N$ locally has finite injective dimension on the punctured spectrum of $\widehat R$, then $\cx_R(M)\le \cx_R(M,N)$ and $\curv_R(M)\le \curv_R(M,N)$.
\end{proposition}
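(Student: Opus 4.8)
The plan is to reduce, in two moves, to \Cref{thm:R-CM}. First, exactly as in the proof of \Cref{thm:R-CM}, passing to $\m$-adic completions changes nothing: $\widehat R$ is again CM, $\widehat N$ is again Burch (\Cref{burchcomplete}), the hypothesis is stated for $\widehat N$ so it is preserved verbatim, and the (Ext-)complexity and curvature of $M$ and of the pair $(M,N)$ are unaffected by completion. So I would assume $R$ is complete; then $R$ has a canonical module $\omega$, and I set $d:=\dim R$. Now by \Cref{thm:R-CM} it is enough to show that $\m^h\,\Ext^{\gg 0}_R(M,N)=0$ for some $h>0$. By \Cref{lem:MCM-appr-Ext-iso} there are MCM modules $X=\syz^d_R(M)$ and $Y$, with $Y$ locally of finite injective dimension on the punctured spectrum, and $\Ext^{n+d}_R(M,N)\cong\Ext^n_R(X,Y)$ for all $n\ge 1$; so it suffices to produce $h$ with $\m^h\,\Ext^{\gg 0}_R(X,Y)=0$.

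To handle $\Ext^n_R(X,Y)$ I would pass to the $\omega$-dual $Y^{\dagger}:=\Hom_R(Y,\omega)$. Since $Y$ is MCM and over a CM local ring with canonical module an MCM module has finite injective dimension if and only if its $\omega$-dual is free, the module $Y^{\dagger}$ is MCM and locally free on the punctured spectrum; in particular $\Tor^R_i(X,Y^{\dagger})$ has finite length for every $i\ge 1$. Using a minimal free resolution $F_\bullet\to X$, the reflexivity $Y\cong Y^{\dagger\dagger}=\Hom_R(Y^{\dagger},\omega)$ together with tensor–hom adjunction gives $\Hom_R(F_\bullet,Y)\cong\Hom_R(F_\bullet\otimes_R Y^{\dagger},\omega)$; substituting a finite injective resolution of $\omega$ in the second slot (legitimate since $\injdim_R\omega=d<\infty$, which also makes the associated spectral sequence converge) produces a spectral sequence $\Ext^q_R(\Tor^R_p(X,Y^{\dagger}),\omega)\Rightarrow\Ext^{p+q}_R(X,Y)$. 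For $n:=p+q>d$ this collapses, by local duality for finite-length modules, onto the single term $q=d$, $p=n-d$, giving
\[
\Ext^n_R(X,Y)\ \cong\ \Hom_R\!\big(\Tor^R_{n-d}(X,Y^{\dagger}),\,E_R(k)\big)\qquad(n>d),
\]
so $\Ext^n_R(X,Y)$ and $\Tor^R_{n-d}(X,Y^{\dagger})$ have the same annihilator for all $n>d$.

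It therefore remains to exhibit a single $h>0$ with $\m^h\,\Tor^R_i(X,Y^{\dagger})=0$ for all $i\ge 1$, and this is the hard part of the argument. These modules are automatically finite length; the issue is the \emph{uniformity} of the annihilating power. Since $R$ is complete and $Y^{\dagger}$ is locally free on the punctured spectrum, its non-free locus is $\m$-primary, so some $\m^h$ is contained in it, and one concludes by a uniform Artin–Rees–type bound (equivalently, the finite-length criterion for the non-free locus used in the proof of \Cref{newloc}, applied uniformly along the syzygies of $Y^{\dagger}$). Granting this, $\m^h\,\Ext^{\gg 0}_R(X,Y)=0$, hence $\m^h\,\Ext^{\gg 0}_R(M,N)=0$, and \Cref{thm:R-CM} delivers $\cx_R(M)\le\cx_R(M,N)$ and $\curv_R(M)\le\curv_R(M,N)$.

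In short, everything except the uniform-annihilation step is a routine chase through completion, \Cref{lem:MCM-appr-Ext-iso}, canonical duality, and \Cref{thm:R-CM}; the one delicate point is producing a power of $\m$ that annihilates \emph{all} of the finite-length modules $\Tor^R_i(X,Y^{\dagger})$, $i\ge 1$, at once. (One can also skip the Matlis-duality identity and argue directly that $Y$ being locally of finite injective dimension on the punctured spectrum forces $\m^h\,\Ext^{\gg 0}_R(X,Y)=0$ for a uniform $h$, which is the Ext-analogue of the same phenomenon.)
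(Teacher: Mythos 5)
Your overall strategy coincides with the paper's: pass to the completion, invoke \Cref{lem:MCM-appr-Ext-iso} to replace $(M,N)$ by MCM modules $(X,Y)$ with $Y$ locally of finite injective dimension on the punctured spectrum, produce a uniform $h$ with $\m^h\Ext_R^{\gg 0}(M,N)=0$, and finish with \Cref{thm:R-CM}. Where you diverge is the middle step. The paper never goes through Tor: it uses the canonical duality $\Ext^n_R(X,Y)\cong\Ext^n_R(Y^{\dagger},X^{\dagger})$ for MCM modules over a complete CM ring with canonical module, observes that $Y^{\dagger}$ is locally free on the punctured spectrum (because $Y_{\fp}$ is a finite direct sum of copies of $\omega_{\fp}$ for every nonmaximal $\fp$), and then applies \cite[Lem.~3.6.(1)]{DGS24} --- the lemma quoted in \Cref{rmk:van-cond} --- to obtain a single power of $\m$ annihilating $\Ext^{\gg 0}_R(Y^{\dagger},X^{\dagger})$. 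Your spectral-sequence/Matlis-duality identification $\Ext^n_R(X,Y)\cong\Hom_R\big(\Tor^R_{n-d}(X,Y^{\dagger}),E_R(k)\big)$ for $n>d$ is a correct, if heavier, substitute for that duality swap (the collapse is fine: for $n>d$ the $p=0$ column dies because $\injdim_R\omega=d$, and for $p\ge 1$ the Tor modules have finite length, so only $q=d$ survives).

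The one genuine gap is exactly the step you flag: you assert, but do not prove, the existence of a single $h$ with $\m^h\Tor^R_i(X,Y^{\dagger})=0$ for all large $i$, gesturing at ``a uniform Artin--Rees-type bound.'' Finite length of each individual $\Tor^R_i(X,Y^{\dagger})$ gives no uniform annihilating power by itself, and the finite-length criterion used in the proof of \Cref{newloc} concerns a single Ext module, not infinitely many at once. What actually supplies the uniformity is precisely \cite[Lem.~3.6.(2)]{DGS24}, already cited in \Cref{rmk:van-cond}.(1): since $Y^{\dagger}$ is locally free (hence locally of finite projective dimension) on the punctured spectrum, a fixed power of $\m$ annihilates $\Tor^R_{\gg 0}(X,Y^{\dagger})$. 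With that citation your argument closes; without it, the key step of your proof is unjustified, and it is the same external input the paper relies on (in its Ext form) after the cleaner duality swap.
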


\begin{proof}
    Set $d:=\dim(R)$. As in the proof of \Cref{thm:R-CM}, passing to completion, we may assume that $R$ is complete, and $N$ locally has finite injective dimension on the punctured spectrum. Hence, since $R$ is CM, it admits a canonical module, say $\omega$. Set $(-)^{\dagger}:= \Hom_R(-,\omega)$. By \Cref{lem:MCM-appr-Ext-iso}, there exist MCM $R$-modules $X$ and $Y$ such that $Y$ locally has finite injective dimension on the punctured spectrum, and $\Ext^{n+d}_R(M,N)\cong \Ext_R^n(X,Y)\cong \Ext_R^n(Y^{\dagger}, X^{\dagger})$ for all $n\ge 1$, where the last isomorphism is by duality of dualizing module, see, e.g., \cite[3.14 and 3.15]{cluster}. Since $Y$ is MCM and locally has finite injective dimension on the punctured spectrum, for every non-maximal prime ideal $\mathfrak p$ of $R$, the localization $Y_{\mathfrak p}$ is a finite direct sum of copies of $\omega_{\mathfrak p}$. Thus, $Y^\dagger$ is locally free on the punctured spectrum of $R$. Therefore, by \cite[Lem.~3.6]{DGS24}, there exists $h>0$ such that $0=\m^h\Ext_R^n(Y^{\dagger}, X^{\dagger})=\m^h\Ext_R^{n+d}(M,N)$ for all $n\gg 0$. Hence, the proposition follows from \Cref{thm:R-CM}. 
\end{proof} 

As a consequence of the above proposition, we obtain the following. 

\begin{theorem}\label{cornew}
    Let $R$ be CM such that $\widehat R$ is locally Gorenstein on the punctured spectrum. Let $M$ and $N$ be $R$-modules such that $N$ is Burch. Assume that $N$ locally has finite projective dimension on the punctured spectrum. Then $\cx_R(M)\le \cx_R(M,N)$ and $\curv_R(M)\le \curv_R(M,N)$. 
\end{theorem}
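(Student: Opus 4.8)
The plan is to deduce this directly from \Cref{gornew}: it suffices to check that, under the present hypotheses, $\widehat N$ locally has finite injective dimension on the punctured spectrum of $\widehat R$, after which \Cref{gornew} applies verbatim (note that $R$ is CM and $N$ is Burch, exactly as required there).

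First I would transfer the projective-dimension hypothesis to the completion: since $N$ locally has finite projective dimension on the punctured spectrum of $R$, \Cref{newloc} shows that $\widehat N$ locally has finite projective dimension on the punctured spectrum of $\widehat R$. Now fix an arbitrary non-maximal prime $\mathfrak q$ of $\widehat R$. By hypothesis $\widehat R_{\mathfrak q}$ is Gorenstein, and by the previous step $(\widehat N)_{\mathfrak q}$ has finite projective dimension over $\widehat R_{\mathfrak q}$. The classical fact that over a Gorenstein local ring a finitely generated module of finite projective dimension has finite injective dimension --- the ring itself, hence every finitely generated free module over it, has finite injective dimension, and this property propagates along the short exact sequences $0\to \syz^{i+1}\to F_i\to \syz^{i}\to 0$ coming from a finite free resolution --- then gives that $(\widehat N)_{\mathfrak q}$ has finite injective dimension over $\widehat R_{\mathfrak q}$. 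Since $\mathfrak q$ was arbitrary, $\widehat N$ locally has finite injective dimension on the punctured spectrum of $\widehat R$, which is precisely the hypothesis of \Cref{gornew}; applying that proposition yields $\cx_R(M)\le \cx_R(M,N)$ and $\curv_R(M)\le \curv_R(M,N)$.

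I do not expect a genuine obstacle here. The only substantive input is the standard Gorenstein-local implication ``finite projective dimension $\Rightarrow$ finite injective dimension'', and the passage of the condition ``locally of finite projective dimension on the punctured spectrum'' between $R$ and $\widehat R$ is exactly \Cref{newloc} (this is also the reason the Gorenstein hypothesis must be stated for $\widehat R$ rather than for $R$, since primes of $\widehat R$ need not contract to non-maximal primes of $R$). Everything else is a direct invocation of the preceding proposition, which is why the statement is of corollary type.
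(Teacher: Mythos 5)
Your proposal is correct and follows exactly the same route as the paper's proof: apply \Cref{newloc} to transfer the finite-projective-dimension condition to $\widehat N$ over $\widehat R$, use the Gorenstein hypothesis on the punctured spectrum of $\widehat R$ to upgrade this to finite injective dimension locally, and then invoke \Cref{gornew}. The only difference is that you spell out the standard implication ``finite projective dimension $\Rightarrow$ finite injective dimension over a Gorenstein local ring,'' which the paper leaves implicit.
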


\begin{proof}
    In view of \Cref{newloc}, $\widehat N$ locally has finite projective dimension on the punctured spectrum of $\widehat R$. Hence, since $\widehat R$ is locally Gorenstein on the punctured spectrum, $\widehat N$ locally has finite injective dimension on the punctured spectrum of $\widehat R$. Therefore the proof follows from \Cref{gornew}. 
\end{proof}

In general, it may happen that $R$ is CM with isolated singularity, but $\widehat R$ is locally not even Gorenstein on the punctured spectrum, see \cite[Main Theorem]{heit}. In the following remark, we note a class of local rings whose completions are locally Gorenstein on the punctured spectrum.

\begin{remark}\label{rmk:loc-gor-punc}
\begin{enumerate}[\rm (1)]
    \item 
    If $R$ is Gorenstein, then $\widehat R$ is Gorenstein, hence $\widehat R$ is locally Gorenstein on the punctured spectrum. If $R$ is Artinian, then trivially $\widehat R$ is locally Gorenstein on the punctured spectrum. 
    \item 
    Suppose that $R$ is CM of dimension $1$. Then it is well known that $\widehat R$ is locally Gorenstein on the punctured spectrum if and only if $R$ admits a canonical ideal. Indeed, in view of \cite[Prop.~2.7]{agl}, $R$ admits a canonical ideal \iff the total ring of fractions $Q(\widehat{R})$ is Gorenstein \iff $\widehat R$ is locally Gorenstein at all minimal primes of $\widehat R$, where the last equivalence follows from \cite[Lem.~10.25.4]{stacks-project} and the fact that the direct product of finitely many rings is Gorenstein \iff all the components are Gorenstein.
    \item 
    If $R$ is CM that admits a canonical module, then $\widehat R$ is locally Gorenstein on the punctured spectrum if and only if $R$ is locally Gorenstein on the punctured spectrum; this can be obtained by applying \Cref{newloc} to the canonical module.
\end{enumerate}
\end{remark}



When $M$ is Burch submodule of an $R$-module $X$ which either has finite $G$-dimension or is torsionless, the following is an analogue of \Cref{thm:R-CM}.

\begin{theorem}\label{new}
    Let $M$ and $N$ be $R$-modules such that $M$ is a Burch submodule of some $R$-module $X$. Assume that $\gdim_R(X)<\infty$, or $X$ is torsionless. Then, the following hold.
    \begin{enumerate}[\rm(1)]
        \item 
        If $\m^h\Tor^R_{\gg 0}(M,N)=0$ for some $h>0$, then $\cx_R(N)\le \tcx_R(M,N)$ and $\curv_R(N)\le \tcurv_R(M,N)$.
        \item 
        If $\m^h\Ext_R^{\gg 0}(M,N)=0$ for some $h>0$, then $\injcx_R(N)\le \cx_R(M,N)$ and $\injcurv_R(N)\le \curv_R(M,N)$.
\end{enumerate}
\end{theorem}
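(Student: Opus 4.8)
The plan is to reduce the problem to a setting where the ambient module has finite projective dimension, so that the annihilation hypotheses on the ``big'' module in Propositions~\ref{prop:tcx-le} and \ref{prop:cx-le} hold for free, and then quote those propositions directly. First I would argue that in either of the two cases, $M$ is a Burch submodule of some $R$-module $L$ with $\pd_R(L)<\infty$. If $X$ is torsionless, then since $X$ is finitely generated over the Noetherian ring $R$, it embeds in a finite free module $F$; composing the given embedding $M\hookrightarrow X$ with $X\hookrightarrow F$ and invoking \Cref{lem:Burch-comp}, $M$ is a Burch submodule of $L:=F$. If instead $\gdim_R(X)<\infty$, then by \ref{para:Aus-Buch-Thm-1.1} there is a short exact sequence $0\to X\to V\to W\to 0$ with $\pd_R(V)<\infty$; composing $M\hookrightarrow X\hookrightarrow V$ and again applying \Cref{lem:Burch-comp}, $M$ is a Burch submodule of $L:=V$.

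Having fixed such an $L$, I would observe that $\pd_R(L)<\infty$ forces $\Tor^R_{\gg 0}(L,N)=0$ and $\Ext_R^{\gg 0}(L,N)=0$, so $\tcx_R(L,N)=\cx_R(L,N)=0$, $\tcurv_R(L,N)=\curv_R(L,N)=0$, and $\m^h\Tor^R_{\gg 0}(L,N)=\m^h\Ext_R^{\gg 0}(L,N)=0$ for the very same $h$ appearing in the hypothesis. For part (1): since $\m^h\Tor^R_{\gg 0}(M,N)=0=\m^h\Tor^R_{\gg 0}(L,N)$ and $\tcx_R(L,N)=0\le\tcx_R(M,N)$, $\tcurv_R(L,N)=0\le\tcurv_R(M,N)$, an application of \Cref{prop:tcx-le} yields $\cx_R(N)\le\tcx_R(M,N)$ and $\curv_R(N)\le\tcurv_R(M,N)$. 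For part (2): similarly $\m^h\Ext_R^{\gg 0}(M,N)=0=\m^h\Ext_R^{\gg 0}(L,N)$ and $\cx_R(L,N)=0\le\cx_R(M,N)$, $\curv_R(L,N)=0\le\curv_R(M,N)$, so \Cref{prop:cx-le} gives $\injcx_R(N)\le\cx_R(M,N)$ and $\injcurv_R(N)\le\curv_R(M,N)$.

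The only genuinely new content beyond bookkeeping is the reduction step in the first paragraph, and I expect its two ingredients to be where the real idea lies: replacing the uncontrolled ambient module $X$ by one of finite projective dimension that still contains $M$ (through torsionlessness, or through the Auslander--Buchweitz approximation \ref{para:Aus-Buch-Thm-1.1}), together with the crucial observation—supplied by \Cref{lem:Burch-comp}—that enlarging the ambient module never destroys the Burch property. Everything downstream is a formal consequence of the already-established Propositions~\ref{prop:tcx-le} and \ref{prop:cx-le}, which themselves carry the weight of the depth-zero base-change argument via $R[X]_{\langle\m,X\rangle}$ in \Cref{thm:tcx-tcurv-CI-ring} and \Cref{thm:injcx-injcurv-CI-ring}. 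One minor point to be careful about is using a single common exponent $h$ for the vanishing conditions on $M$ and on $L$, which is automatic here since the $\Tor$ (resp.\ $\Ext$) of $L$ with $N$ vanishes outright.
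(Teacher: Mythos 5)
Your proposal is correct and follows essentially the same route as the paper: embed $X$ into a module $L$ of finite projective dimension (a free module in the torsionless case, the module $V$ from the Auslander--Buchweitz exact sequence of \ref{para:Aus-Buch-Thm-1.1} in the finite $G$-dimension case), use \Cref{lem:Burch-comp} to transfer the Burch property of $M$ to the larger ambient module, note that $\Tor^R_{\gg 0}(L,N)=0=\Ext^{\gg 0}_R(L,N)$, and conclude via Propositions~\ref{prop:tcx-le} and \ref{prop:cx-le}. The only cosmetic quibble is that the embedding of $X$ into a finite free module comes from torsionlessness itself (injectivity of $X\to X^{**}$), not from $X$ being finitely generated over a Noetherian ring, but this does not affect the argument.
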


\begin{proof}
    First, assume that $\gdim_R(X)<\infty$. Then, in view of \Cref{para:Aus-Buch-Thm-1.1}, there exists an $R$-module $L$ of finite projective dimension and an injective homomorphism $h : X \to L$. By \Cref{lem:Burch-comp}, $h(M)$ is a Burch submodule of $L$. Note that $\Tor^R_{\gg 0}(L,N)=0=\Ext^{\gg 0}_R(L,N)$, and $h(M)\cong M$. Therefore, we are done by \Cref{prop:tcx-le,prop:cx-le}.

    For the second case, assume that $X$ is torsionless. Then, there exist a free $R$-module $L$ and an injective homomorphism $h: X \to L$. Hence, the proof is similar as above.
\end{proof}

\begin{remark}\label{rmk:van-cond}
\begin{enumerate}[(1)]
    \item 
    The hypotheses $\m^h\Tor^R_{\gg 0}(M,N)=0$ and/or $\m^h\Tor^R_{\gg 0}(L,N)=0$ for some $h\ge 1$ in Propositions~\ref{prop:tcx-tcurv-CI-ring}, \ref{prop:tcx-le} and Theorems~\ref{thm:tcx-tcurv-CI-ring}, \ref{new}.(1) hold if $N$ locally has finite projective dimension on the punctured spectrum; see \cite[Lem.~3.6.(2)]{DGS24}.
    \item 
    The hypotheses $\mathfrak{m}^h\Ext_R^{\gg 0}(M,N)=0$ and/or $\m^h\Ext_R^{\gg 0}(M,L)=0$ for some $h\ge 1$ in Propositions~\ref{prop:injcx-injcurv-CI-ring}, \ref{prop:cx-curv-CI-ring}, \ref{prop:cx-le-2} and Theorems~\ref{thm:cx-curv-CI-ring}, \ref{thm:R-CM}, \ref{new}.(2) hold if $M$ locally has finite projective dimension on the punctured spectrum; see \cite[Lem.~3.6.(1)]{DGS24}.
    \item 
    Thus, when $R$ has an isolated singularity, the annihilation conditions on Tor/Ext modules in Propositions~\ref{prop:tcx-tcurv-CI-ring}, \ref{prop:injcx-injcurv-CI-ring}, \ref{prop:cx-curv-CI-ring}, \ref{prop:tcx-le}, \ref{prop:cx-le}, \ref{prop:cx-le-2} and Theorems~\ref{thm:tcx-tcurv-CI-ring}, \ref{thm:injcx-injcurv-CI-ring}, \ref{thm:cx-curv-CI-ring}, \ref{thm:R-CM}, \ref{new} are immediate.
\end{enumerate}
\end{remark}




 
The following proposition illustrates the usefulness of \Cref{lem:Burch-comp} by improving the result \cite[Cor.~5.4]{GS24}.

\begin{proposition}\label{vanish}
    Let $M$ and $N$ be $R$-modules such that $M$ is a Burch submodule of some $R$-module $X$. Suppose $n\ge \max\{1,\depth(N)\}$ is an integer such that $\Ext^i_R(M,N)=0$ for $i=n-1,n,n+1$, and at least one of the following holds.
    \begin{enumerate}[\rm(1)]
     \item $\gdim_R(X)<\infty$, and $n>\depth(R)$.
     \item $X$ is torsionless.
    \end{enumerate}
    Then, $\id_R(N)<\infty$.
\end{proposition}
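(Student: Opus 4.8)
\emph{Strategy.} The plan is to use \Cref{lem:Burch-comp} to replace the ambient module $X$ by one of finite projective dimension, reducing to the situation handled, in essence, by \cite[Cor.~5.4]{GS24}. In case (1), since $\gdim_R(X)<\infty$, \Cref{para:Aus-Buch-Thm-1.1} provides an injective homomorphism $h\colon X\to L$ with $\pd_R(L)<\infty$, and the Auslander--Buchsbaum formula gives $\pd_R(L)\le\depth(R)<n$. In case (2), torsionlessness of $X$ gives an embedding $h\colon X\to L$ into a free module $L$, so $\pd_R(L)=0<1\le n$. By \Cref{lem:Burch-comp}, $h(M)$ is a Burch submodule of $L$; since $h$ restricts to an isomorphism $M\cong h(M)$, we keep $\Ext^i_R(h(M),N)=0$ for $i=n-1,n,n+1$, and now also $\pd_R(L)<n$. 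Thus, after renaming, we may assume $M$ is a Burch submodule of $L$ with $\pd_R(L)<n$.

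\emph{Extracting two consecutive vanishing Bass numbers.} Since $n>\pd_R(L)$, we have $\Ext^i_R(L,N)=0$ for $i\ge n$; combining this with the hypothesis in the long exact sequence of $0\to M\to L\to L/M\to 0$ gives $\Ext^i_R(L/M,N)=0$ for $i=n,n+1$. If $\depth(M)\ge1$, pick, via \Cref{lem:DK23-3.6}, an $M$-regular $x$ with $k$ a direct summand of $M/xM$; the long exact sequence of $0\to M\xrightarrow{x}M\to M/xM\to 0$ then forces $\Ext^i_R(M/xM,N)=0$, hence $\mu^i_R(N)=0$, for $i=n,n+1$. If $\depth(M)=0$, pass to $S:=R[X]_{\langle\m,X\rangle}$ with $L':=L[X]_{\langle\m,X\rangle}$ and $M':=(M+XL[X])_{\langle\m,X\rangle}$ as in \Cref{lem:GS24-lem-4.8}: then $M'$ is Burch in $L'$ with $\depth_S(M')\ge1$ (as $X$ is $M'$-regular), and \Cref{lem:Tor-Ext-iso-over-S-R} gives $\Ext^i_S(M',N)\cong\Ext^i_R(M,N)\oplus\Ext^i_R(L/M,N)=0$ for $i=n,n+1$; applying \Cref{lem:DK23-3.6} over $S$ together with the change-of-rings splitting $\Ext^j_S(k,N)\cong\Ext^j_R(k,N)\oplus\Ext^{j-1}_R(k,N)$ (valid since $X$ is $S$-regular and annihilates both $k$ and $N$; note also $\id_S(N)=\id_R(N)+1$) again yields $\mu^n_R(N)=\mu^{n+1}_R(N)=0$. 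As $n\ge\max\{1,\depth_R(N)\}$ while $\mu^{\depth_R(N)}_R(N)\ne 0$, we in fact get $n>\depth_R(N)$.

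\emph{Conclusion.} Two consecutive vanishing Bass numbers $\mu^n_R(N)=\mu^{n+1}_R(N)=0$ above the depth of $N$ force $\id_R(N)<\infty$, giving the claim. I expect the only genuine work to lie in the bookkeeping of the reduction and of the Bass-number extraction: verifying that passing to $L$ lowers all the relevant invariants---in particular that $\pd_R(L)<n$, which is exactly why the hypothesis $n>\depth(R)$ is needed in case (1)---and tracking how the \emph{three} consecutive Ext-vanishings for the pair $(M,N)$ degrade, through $L/M$, through the $R[X]$-construction, and through the regular-element long exact sequences, to exactly \emph{two} consecutive Bass-number vanishings for $N$, which is just enough to conclude.
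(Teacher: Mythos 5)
Your proof is correct, and the reduction is exactly the one the paper uses: in case (1) you invoke \Cref{para:Aus-Buch-Thm-1.1} to embed $X$ into a module $L$ of finite projective dimension (with $\pd_R(L)\le\depth(R)<n$ by Auslander--Buchsbaum), in case (2) you embed $X$ into a free module, and in both cases \Cref{lem:Burch-comp} transports the Burch property to $h(M)\subseteq L$, yielding $\Ext^i_R(L,N)=0$ for $i=n,n+1$. The only divergence is in how the argument is finished: the paper stops here and cites \cite[Thm.~5.3]{GS24}, whereas you unpack that citation and reprove it with the paper's own toolkit --- the exact sequence $0\to M\to L\to L/M\to 0$ to get $\Ext^i_R(L/M,N)=0$ for $i=n,n+1$, \Cref{lem:DK23-3.6} in positive depth, and the passage to $S=R[X]_{\langle\m,X\rangle}$ via \Cref{lem:GS24-lem-4.8} and \Cref{lem:Tor-Ext-iso-over-S-R} in depth zero. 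Your bookkeeping there is right: over $S$ one only obtains the single vanishing $\Ext^{n+1}_S(k,N)=0$, but the change-of-rings identity $\mu^{j}_S(N)=\mu^{j}_R(N)+\mu^{j-1}_R(N)$ (proved, e.g., from $0\to N'\xrightarrow{X}N'\to N\to 0$ with $N'=N[X]_{\langle\m,X\rangle}$ and \cite[p.~140, Lem.~2]{Mat89}, since multiplication by $X$ is zero on $\Ext_S(k,-)$) converts it into the two consecutive vanishings $\mu^n_R(N)=\mu^{n+1}_R(N)=0$. What this buys is a self-contained treatment modulo one input; what it still relies on is that last input, the non-vanishing of Bass numbers in the range $[\depth N,\id N]$ (Fossum--Foxby--Griffith--Reiten/Roberts, resting on the New Intersection Theorem), which is presumably also the engine inside the cited \cite[Thm.~5.3]{GS24} --- so the two arguments are the same in substance, differing only in where the black box is placed.
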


\begin{proof}
    (1) In view of \Cref{para:Aus-Buch-Thm-1.1}, there exist an $R$-module $L$ of finite projective dimension and an injective homomorphism $h: X \to L$. By \Cref{lem:Burch-comp}, $h(M)$ is a Burch submodule of $L$. Since $n>\depth(R)\ge \pd_R(L)$, it follows that $\Ext_R^{i}(L,N)=0$ for $i=n, n+1$. Since $h(M)\cong M$, by \cite[Thm.~5.3]{GS24}, $\id_R(N)<\infty$. 

    (2) Since $X$ is torsionless, there exist a free $R$-module $L$  and an injective homomorphism $h: X \to L$. Note that $\Ext_R^{\ge 1}(L,N)=0$. Hence, as in (1), $\id_R(N)<\infty$.
\end{proof}

We now provide the proofs of Corollaries~\ref{cor:char-CI-via-burch-ideals} and \ref{cor:char-CI-via-burch-ideals-in-Gor-ring}.

\begin{proof}[Proof of \Cref{cor:char-CI-via-burch-ideals}]
    Note that every $\m$-primary ideal is locally free on the punctured spectrum. Since $R$ is torsionless, $I$ is $\m$-primary and Burch, by \Cref{thm2}, $\cx_R(J)\le \tcx_R(I,J)$ and $\injcx_R(J)\le \cx_R(I,J)$. Therefore, since $J$ is also Burch, in view of Proposition~\ref{prop:GS24-5.1}.(1),
    \begin{equation}\label{eqn-cx-k-pair}
        \cx_R(k)\le \tcx_R(I,J) \; \mbox{ and } \; \cx_R(k)\le \cx_R(I,J).
    \end{equation}
    Thus, if $\cx_R(k)=\infty$, then clearly $\cx_R(k)=\tcx_R(I,J)=\cx_R(I,J)$. So we may assume that $\cx_R(k)$ is finite. Then, by Proposition~\ref{prop:properties-cx-curv}.(2), $R$ is complete intersection. Hence, by Propositions~\ref{prop:AB00-II-DGS24-7.9} and \ref{prop:properties-cx-curv}.(1).(i), $\tcx_R(I,J)\le \cx_R(k)$ and $\cx_R(I,J)\le \cx_R(k)$. Thus $\tcx_R(I,J)=\cx_R(I,J)=\cx_R(k)$. In view of Proposition~\ref{prop:properties-cx-curv}.(2), the equivalence of (1), (2) and (3) follows from these equalities. Arguing similarly as in \eqref{eqn-cx-k-pair}, \Cref{thm2} and Proposition~\ref{prop:GS24-5.1}.(2) yield $\curv_R(k)\le \tcurv_R(I,J)$ and $\curv_R(k)\le \curv_R(I,J)$. Hence, the implications $(4)\implies (1)$ and $(5)\implies (1)$ follow from Proposition~\ref{prop:properties-cx-curv}.(2). The reverse implications are obvious from \Cref{prop:AB00-II-DGS24-7.9}.
\end{proof}

\begin{proof}[Proof of \Cref{cor:char-CI-via-burch-ideals-in-Gor-ring}] 
    In view of \Cref{prop:GS24-5.1} and \Cref{cornew}, $\cx_R(k)=\cx_R(I) \le \cx_R(I,J)$ and $\curv_R(k)=\curv_R(I)\le \curv_R(I,J)$. Thus, if $\cx_R(k)=\infty$, then $\cx_R(k)=\cx_R(I,J)$. Therefore, we may assume that $\cx_R(k)$ is finite. By \Cref{prop:properties-cx-curv}.(2), this implies that $R$ is a complete intersection. Hence, Propositions~\ref{prop:AB00-II-DGS24-7.9}.(1) and \ref{prop:properties-cx-curv}.(1) yield $\cx_R(k) \le \cx_R(I,J) \le \min\{\cx_R(I), \cx_R(J)\} \le \cx_R(k)$. Consequently, $\cx_R(I,J) = \cx_R(k)$. Thus, the equivalence of (1) and (2) follows from Proposition~\ref{prop:properties-cx-curv}.(2). Since $\curv_R(k)\le \curv_R(I,J)$, the equivalence of (1) and (3) can be observed from Propositions~\ref{prop:properties-cx-curv}.(2) and \ref{prop:AB00-II-DGS24-7.9}.(1).
\end{proof}

\section{Some examples}\label{sec:exam}

Here we provide two examples computing complexity and curvature for pairs of Burch ideals to complement our results.

\begin{example}\label{exam:m-power-2=0}
    Let $(R,\m,k)$ be a Noetherian local ring such that $\m\neq 0$ and $\m^2=0$. Set $b:=\mu(\m)$. For example, $R$ can be $k[x_1,\ldots,x_b]/(x_ix_j : 1\le i\le j\le b)$, where $x_1,\ldots,x_b$ are indeterminates over a field $k$. Note that $\m$ is a Burch ideal of $R$. Moreover, 
    \begin{equation*}
        \tcx_R(\m,\m) = \cx_R(\m,\m) = \cx_R(k) =
        \begin{cases}
            1 & \mbox{if $b = 1$,}\\
            \infty & \mbox{if  $b \ge 2$,}
        \end{cases}
    \end{equation*}
    and $\tcurv_R(\m,\m) = \curv_R(\m,\m) = \curv_R(k) = b$. Clearly, $R$ is complete intersection \iff $b=1$ \iff $R$ is a hypersurface.
\end{example}

\begin{proof}
    Note that $\m\cong k^{\oplus b}$. Then $\beta_n^R(k)=\beta_{n-1}^R(\m)=b\beta_{n-1}^R(k)$ for all $n\ge 1$. Thus $\beta_n^R(k)=b\beta_{n-1}^R(k) =b^2\beta_{n-2}^R(k) = \cdots = b^{n}\beta_0^R(k) = b^{n}$ for all $n$. Now $\Tor_n^R(\m,\m)\cong \Tor_n^R(k^{\oplus b},k^{\oplus b})\cong \bigoplus_b\bigoplus_b\Tor_n^R(k,k)$. Similarly, $\Ext^n_R(\m,\m)\cong \bigoplus_b\bigoplus_b\Ext^n_R(k,k)$. So $\mu_R(\Tor_n^R(\m,\m))=\mu_R(\Ext^n_R(\m,\m))=b^2\beta_n^R(k)=b^{n+2}$ for all $n$. Hence, computing complexity and curvature, the desired equalities follow.
\end{proof}
 
    

Our next example shows that the $\m$-primary condition on $I$ is not a necessary condition in \Cref{cor:char-CI-via-burch-ideals}. This leads to \Cref{ques:omit-m-primary}.

\begin{example}\label{exam:omit-m-primary}
    Fix an integer $b\ge1$. Let $R:=k[x_1,\ldots,x_b]/(x_ix_j : 1\le i <j\le b)$, where $x_1,\ldots,x_b$ are indeterminates over a field $k$. Set $\m:=(x_1,\ldots,x_b)$. Let $\ell,m\ge 1$ be two integers. Then $\m^{\ell}$ and $(x_i^{\ell+1}) = x_i^{\ell}\m$ (for $1\le i \le b$) are Burch ideals, however $(x_i^{\ell})$ are not $\m$-primary.
    For $1 \le i, j \le b$, we have
    \begin{align*}
    & \tcx_R((x_i^{\ell}),(x_j^{m})) = \tcx_R((x_i^{\ell}),\m^m) =\tcx_R(\m^{\ell},\m^m) = \cx_R((x_i^{\ell}),(x_j^{m})) \\ & = \cx_R((x_i^{\ell}),\m^m) = \cx_R(\m^{\ell},(x_j^{m})) = \cx_R(\m^{\ell},\m^m) = \cx_R(k) = 
    \begin{cases}
        b-1 & \mbox{if $1\le b \le 2$,}\\ 
        \infty & \mbox{if  $b \ge 3$,}
    \end{cases}
    \end{align*}
    and
    \begin{align*}
        & \tcurv_R((x_i^{\ell}),(x_j^{m})) = \tcurv_R((x_i^{\ell}),\m^m) = \tcurv_R(\m^{\ell},\m^m) = \curv_R((x_i^{\ell}),(x_j^{m})) \\
        & = \curv_R((x_i^{\ell}),\m^m) = \curv_R(\m^{\ell},(x_j^{m})) = \curv_R(\m^{\ell},\m^m) = \curv_R(k) = b-1.
    \end{align*}
\end{example}

\begin{proof}
    Fix $1 \le i, j \le b$. Observe that $\ann_R(x_i^\ell)=\ann_R(x_i)$. Therefore, $(x_i^\ell) \cong R/ \ann_R(x_i^\ell) \cong R/ \ann_R(x_i) \cong (x_i)$. Consequently, $\m^{\ell} \cong (x_1^{\ell})\oplus \dots \oplus(x_b^{\ell}) \cong (x_1)\oplus \dots \oplus(x_b) \cong \m$. So we need to compute complexity and curvature involving $(x_i)$, $(x_j)$ and $\m$.
    
    Since $\m \cong (x_1)\oplus \dots \oplus(x_b)$, it follows that
    \begin{equation}\label{Ext-iso}
        \bigoplus_{r=1}^b\Ext^n_R((x_r),\m) \cong \Ext^n_R(\m,\m) \cong \bigoplus_{s=1}^b\Ext^n_R(\m, (x_s)) \cong \bigoplus_{r,s=1}^b\Ext^n_R((x_r),(x_s)).
    \end{equation}
    Hence, by \Cref{lem:DGS-2.7-2.2}, $\cx_R(\m,\m) = \sup\{\cx_R((x_r),\m) : 1\le r \le b\} = \cx_R((x_i),\m)$ by symmetry of the variables. Similarly, $\cx_R(\m,\m) = \cx_R(\m,(x_j))$. Moreover, the isomorphisms \eqref{Ext-iso} yield that
    $\cx_R(\m,\m) = \sup\{\cx_R((x_r),(x_s)) : 1\le r,s \le b\}$. By symmetry of the variables, note that $\cx_R((x_r),(x_s))$, for $1\le r,s \le b$ with $r\neq s$, are equal. Also $\cx_R((x_r),(x_r))$, for $1\le r \le b$, are equal. Since $\Omega_R^1(x_r) = \bigoplus_{1\le s\le b, s\neq r} (x_s)$, in view of \Cref{lem:DGS-2.7-2.2}, $\cx_R((x_r),(x_r)) = \cx_R(\Omega_R^1(x_r),(x_r)) = \sup\{\cx_R((x_s),(x_r)) : 1\le s\le b, s\neq r \} = \cx_R((x_s),(x_r))$ for every $s\neq r$. Thus, $\cx_R(\m,\m) = \cx_R((x_i),(x_j))$. Combining all these equalities, $\cx_R(\m,\m) = \cx_R((x_i),\m) = \cx_R(\m,(x_j)) = \cx_R((x_i),(x_j))$. Similarly, one obtains the corresponding equalities for curvature, Tor-complexity and Tor-curvature.

    Since $\m = \Omega_R^1(k)$ and $\m$ is a Burch ideal, in view of \Cref{prop:GS24-5.1}, $\cx_R(\m,\m)= \cx_R(k,\m)= \injcx_R(\m)=\cx_R(k)$ and $\tcx_R(\m,\m)= \tcx_R(k,\m)= \cx_R(\m)=\cx_R(k)$. Similarly, $\curv_R(\m,\m)=\curv_R(k)=\tcurv_R(\m,\m)$, which is same as $\curv_R(\m)$. So, in order to prove the desired equalities, it is enough to compute $\cx_R(\m)$ and $\curv_R(\m)$.

    Since $\Omega_R^1(x_r) = \bigoplus_{1\le s\le b, s\neq r} (x_s)$, it follows that $\beta_1^R((x_r)) =\mu(\Omega_R^1(x_r))=b-1$ for $1\le r \le b$. Hence
    \begin{center}
        $\beta_2^R(x_r)=\beta_1^R(\Omega_R^1(x_r))=\beta_1^R(\bigoplus_{1\le s\le b, s\neq r} (x_s))= \sum_{1\le s\le b, s\neq r} \beta_1^R((x_s))=(b-1)^2$.
    \end{center}
    Thus, using induction, for each $n\ge 1$, we obtain that $\beta_n^R(x_r)=\beta_{n-1}^R(\Omega^1_R(x_r))=(b-1)^n$ for $1\le r\le b$. Therefore $\beta_n^R(\m)=\beta_n^R\big((x_1)\oplus \dots \oplus(x_b)\big)=\sum_{r=1}^b\beta_n^R(x_r)=b(b-1)^n$ for all $n\ge 1$. So
    \[
    \cx_R(\m)=
    \begin{cases}
        0 & \mbox{if $b = 1$,}\\ 
        1 & \mbox{if $b = 2$,}\\
        \infty & \mbox{if  $b \ge 3$,}
    \end{cases}
    \]
    and $\curv_R(\m)=b-1$. Thus the required equalities follow.
    \mycomment{
    Note that for each $\alpha \in \{1, \ldots, b\}$ and $\ell \ge 1$, $(x_{\alpha}^{\ell})$ is integrally closed. Indeed, consider a non-zero element $r\in R$ which is integral over $(x_{\alpha}^{\ell})$. Then
    \begin{equation}\label{equn:integral dependence}
        r^n+a_1r^{n-1}+\dots +a_n=0, \; \mbox{where $a_i \in (x_{\alpha}^{\ell i})$, $1 \le i \le n$,}
    \end{equation}
    for some integer $n$. Clearly, $r^n \in (x_{\alpha})$, and hence $r\in (x_{\alpha})$. Since $x_{\alpha}x_j=0$ for $j\neq \alpha$, in view of \eqref{equn:integral dependence}, $r$ has the form $r=b_tx_{\alpha}^t+b_{t+1}x_{\alpha}^{t+1}+ \dots +b_mx_{\alpha}^m$ for some $m\ge t \ge 1$, where $b_i \in k$ and $b_t \neq 0$. The initial degree of $r^n$ is $nt$. Since $a_i \in (x_{\alpha}^{\ell i})$, the initial degree of $a_i$ is greater than or equal to $\ell i$ for $1 \le i \le n$. So the initial degree of $a_ir^{n-i}$ is greater than or equal to $\ell i + (n-i)t = nt+i(\ell -t)$. Therefore, comparing the initial degrees of $r^n$ and $a_ir^{n-i}$ (for $1 \le i \le n$), one obtain that $t \ge \ell$. Hence $r \in (x_{\alpha}^{\ell})$.}
\end{proof}


We close this article by presenting the following natural questions.




\begin{question}\label{ques:omit-m-primary}
    Does \Cref{cor:char-CI-via-burch-ideals} hold even if $I$ is not $\m$-primary?
\end{question}

\begin{question}\label{ques:omit-Gor}
    Does \Cref{cor:char-CI-via-burch-ideals-in-Gor-ring} hold even if $\widehat R$ is not locally Gorenstein on punctured spectrum?
\end{question}

\bibliographystyle{plain}
\bibliography{main}

\end{document}